\documentclass{article}
\usepackage{float}

\usepackage{enumerate}
\usepackage{natbib}
\bibliographystyle{unsrtnat}

\usepackage{amsthm,amssymb, amsmath} 
\usepackage{dsfont}
\usepackage{color}
\usepackage{ifthen} 
\usepackage{xspace}
\usepackage{url}
\usepackage{braket}
\usepackage[title]{appendix}
\usepackage{graphicx}

\usepackage{geometry}
\geometry{
 left=45mm,
 right=45mm,
 top=28mm,
 bottom=30mm
 }
 
\theoremstyle{plain}
\newtheorem{theorem}{Theorem}[section]
\newtheorem{lemma}[theorem]{Lemma}
\newtheorem{corollary}[theorem]{Corollary}
\newtheorem{proposition}[theorem]{Proposition}

\theoremstyle{definition}
\newtheorem{definition}[theorem]{Definition}
\newtheorem{example}[theorem]{Example}%
\theoremstyle{remark}

\theoremstyle{plain}
\newtheorem{condition}[theorem]{Condition}

\newcommand{\proofstring}{Proof}
\renewenvironment{proof}[1][]{\noindent\textbf{\ifthenelse{\equal{#1}{}}{\proofstring:}{~#1:}}\xspace}{\hfill$\blacksquare$\medskip\par}



\newcommand{\R}{\mathds{R}}



\newcommand{\E}{\mathds{E}}
\newcommand{\var}{\mathds{V}ar}
\newcommand{\Var}{\var}

\newcommand{\Prob}{\mathds{P}}
\newcommand{\Ind}{\mathds{1}}
\newcommand{\ind}{\Ind}

\newcommand{\given}{\middle|}

\def\Inte{{\theta(\widehat{F}_n)}}

\newcommand\Id{\operatorname{Id}}

\newcommand{\bigOp}{O_p}
\newcommand{\Appendice}{Appendix}

\usepackage[colorlinks=true]{hyperref}

\begin{document}

\title{Kaplan-Meier V- and U-statistics.}

\author{Tamara Fern\'andez\\
  University College London\\
  \texttt{t.a.fernandez@ucl.ac.uk} \\
   \and
   Nicol\'as Rivera \\
  University of Cambridge\\
   \texttt{nicolas.rivera@cl.cam.ac.uk} \\
}

\maketitle

\numberwithin{equation}{section}
\begin{abstract}
\sloppy{In this paper, we study Kaplan-Meier V- and  U-statistics respectively defined as $\theta(\widehat{F}_n)=\sum_{i,j}K(X_{[i:n]},X_{[j:n]})W_iW_j$ and $\theta_U(\widehat{F}_n)=\sum_{i\neq j}K(X_{[i:n]},X_{[j:n]})W_iW_j/\sum_{i\neq j}W_iW_j$, where $\widehat{F}_n$ is the Kaplan-Meier estimator, $\{W_1,\ldots,W_n\}$ are the Kaplan-Meier weights and $K:(0,\infty)^2\to\R$ is a symmetric kernel. As in the canonical setting of uncensored data, we differentiate between two asymptotic behaviours for $\theta(\widehat{F}_n)$ and $\theta_U(\widehat{F}_n)$. Additionally, we derive an asymptotic canonical V-statistic representation of the Kaplan-Meier V- and U-statistics. By using this representation we study properties of the asymptotic distribution. Applications to hypothesis testing are given.}
\end{abstract}

\section{Introduction}
Let $F$ be a distribution on $(0,\infty)$ of interest. In this paper we study the estimation of parameters of the form $\theta(F)=\int_0^{\infty}\int_0^{\infty} K(x,y)dF(x)dF(y)$, where $K:(0,\infty)^2\to\R$ is a measurable and symmetric  function, commonly known as kernel function. If we consider i.i.d. samples $X_1,\ldots, X_n$ from $F$, the standard estimators for $\theta(F)$ are the canonical V- and U-statistics, 
\begin{align}
\theta(\tilde{F}_n)=
\frac{1}{n^2}\sum_{i=1}^n\sum_{j=1}^nK(X_i,X_j), \quad
\text{and}\quad&
\theta_U(\tilde{F}_n)=\frac{1}{n(n-1)}\sum_{i\neq j}K(X_i,X_j),\nonumber
\end{align}
respectively, where  $\tilde{F}_n(t)=\frac{1}{n}\sum_{i=1}^n\ind_{\{X_i\leq t\}}$ denotes the empirical distribution.

Under some regularity conditions $\theta(\tilde{F}_n)\to \theta(F)$ and $\theta_U(\tilde{F}_n)\to \theta(F)$ almost surely as n approaches infinity. It is of interest to study the limit distribution of the differences $\theta(\tilde{F}_n)-\theta(F)$ and $\theta_U(\tilde{F}_n)-\theta(F)$. The standard theory of V- and U-statistics distinguishes  between two asymptotic behaviours for the distribution of the errors $\theta_U(\tilde{F}_n)-\theta(F)$ and $\theta(\tilde{F}_n)-\theta(F)$ when the number of samples tends to infinity. These two asymptotic regimes, widely known as \textit{degenerate} and \textit{non-degenerate}, are characterised by the behaviour of the variance of the projection $\phi:\R_+\to\R$, defined as
\begin{align}\label{eqn:random1294hsd9}
\phi(x) = \E(K(x, X_2)) = \int_0^{\infty} K(x,y)dF(y).
\end{align}
Assume that $\E(K(X_1,X_2)^2)<\infty$. In one hand, we say that we are in the 
non-degenerate regime if  $\var(\phi(X_1))>0$, where we have that
\begin{align*}
\sqrt{n}(\theta_U(\tilde{F}_n)-\theta(F))\overset{\mathcal{D}}{\to} N(0,4\Var(\phi(X_1))).
\end{align*}
On the other hand, we are in the degenerate regime if  $\var(\phi(X_1))=0$, where it holds that
\begin{align*}
n(\theta_U(\tilde{F})-\theta(F))&\overset{\mathcal{D}}{\to}\sum_{i=1}^\infty \lambda_i(\xi^2_i-1),
\end{align*}
where $\lambda_1,\lambda_2,\ldots$ are constants in $\R$, and $\xi_1,\xi_2,\ldots$ are independent standard Gaussian random variables. Similar results hold for V-statistics under slightly stronger assumptions. Indeed, if the extra condition $\E(|K(X_1,X_1)|)<\infty$  holds, then the exact same limit is obtained in the non-degenerate regime, while for the degenerate regime,  $n(\theta(\tilde{F}_n)-\theta(F))\overset{\mathcal{D}}{\to}\E(K(X_1,X_1))+\sum_{i=1}^\infty \lambda_i(\xi^2_i-1)$. We refer to the book of \citet{Koroljuk94}
for a comprehensive account of the theory of V- and U- statistics.

In this paper, we study the analogue of V- and U-statistics in the setting of right-censored data, that usually appears in Survival Analysis applications, in which we observe samples of the form $(X_i,\Delta_i)_{i=1}^n$ where $X_i \in (0,\infty)$ and $\Delta_i \in \{0,1\}$. Here, $\Delta_i=1$  indicates that $X_i$ is an actual sample from $F$, while $\Delta_i= 0$ indicates that $X_i$ corresponds to a right-censored observation.  Similar to the uncensored setting, we are interested on estimating $\theta(F)$, however, as the data is right-censored, it is not possible to compute the canonical V- and U-statistics as the empirical distribution $\tilde{F}_n$ is not available in this setting. Instead, we propose to replace the empirical distribution by the Kaplan-Meier estimator $\widehat{F}_n$ which is the standard estimator for $F$ in the setting of right-censored data. 

The Kaplan-Meier V- and U-statistics are defined as
\begin{align*}
\theta(\widehat{F}_n)&=\sum_{i=1}^n\sum_{j=1}^nW_iW_jK(X_{[i:n]},X_{[j:n]})
\end{align*}
and
\begin{align*}
\theta_U(\widehat{F}_n)&=\frac{\sum_{i\neq j}W_iW_jK(X_{[i:n]},X_{[j:n]})}{\sum_{i\neq j}W_i W_j},
\end{align*}
respectively, where $W_i$, are the so-called Kaplan-Meier weights and $X_{[i:n]}$ denotes the $i$-th order statistic. In this paper we conveniently write $\theta(\widehat F_n)$ as 
\begin{align*}
\theta(\widehat{F}_n)=\int_{0}^{\infty}\int_{0}^{\infty} K(x,y)d\widehat{F}_n(x)d\widehat{F}_n(y),
\end{align*}
which is known in the literature as a Kaplan-Meier double integral.

Asymptotic properties of Kaplan-Meier integrals have been studied by several authors. For the simplest case of univariate functions, Central Limit Theorems for $\int_{0}^{\infty} K(x)d\widehat{F}_n(x)$ were obtained in full generality by \citet{stute1995} and  \citet{akritas2000}. \citet{stute1995} achieved the result by expressing the Kaplan-Meier estimator as the sum of i.i.d. random variables plus some asymptotically negligible terms. By using this representation, Stute was able to deal with more general functions $K$ than preceding approaches (e.g. \citep{gill1983large,Yang1994}), however the terms in the i.i.d. representation are quite complicated, leading to strong assumptions. This problem is aggravated in the case of distributions with atoms. \citet{akritas2000} improved the result of \citet{stute1995}, obtaining weaker conditions in a more general framework. This was accomplished by using the martingale arguments developed by \citet{gill1980censoring,gill1983large}, and the identities and inequalities developed by \citet{ritov1988} and \citet{Efron1990}.

Multiple Kaplan-Meier integrals were studied by \citet{gijbels1991almost} and by \citet{bose2002asymptotic}. \citet{gijbels1991almost} studied a simplification of the  problem which considers the class of truncated Kaplan-Meier integrals $\int_0^t\ldots\int_0^tK(x_1,\ldots,x_m)\prod_{j=1}^m\widehat{F}_n(x_j)$, where $t$ is a fixed value, avoiding integration over the whole support of the observations. Then, by using an asymptotic i.i.d. sum representation of the Kaplan-Meier estimator $\widehat{F}_n$ together with integration by parts,  the authors derive an almost sure canonical V-statistic representation of $\theta(\widehat{F}_n)-\theta(F)$ up to an error of order $O(n^{-1}\log(n))$. While this result allows them to derive limiting distributions in the non-degenerate case (by scaling by $\sqrt{n}$), it is not possible to obtain results for the degenerate case since the error is too large to be scaled by $n$. Moreover, their representation is restricted to continuous distribution functions $F$.  \citet{bose2002asymptotic} analysed Kaplan-Meier double integrals in a more general setting by using a generalisation of the i.i.d. representation of the Kaplan-Meier estimator derived by \citet{stute1995} for uni-dimensional Kaplan-Meier integrals. By using this representation, \citet{bose2002asymptotic} were able to write the Kaplan-Meier double integral as a V-statistic plus some error terms. Nevertheless, similar to the univariate case, the error terms, that appear as consequence of using this approximation, are quite complicated and thus, dealing with them requires very strong and somewhat artificial conditions which are hard to verify in practice.

An alternative estimator of $\theta(F)$ in the right-censored setting is obtained by using the so-called \emph{Inverse Probability of censoring weighted} (IPCW) estimator of $F$ introduced by \citep{Satten2001TheKaplan}, which can be seen as a simplification of the Kaplan-Meier estimator. IPCW U-statistics coincide with Kaplan-Meier U-statistics when the survival times are continuous and the largest sample point is uncensored \cite{Satten2001TheKaplan}. IPCW U-statistics were studied by \citet{datta2010inverse}, however, they only provide results for the non-degenerate regime.

There are several works that study limit distributions of V- and U-statistics in the setting of dependent data (see \cite{Sen1972,Denker1986, Yoshihara1976,Dewan2002,Dehling2010,
Beutner2012,Beutner2014}). Nevertheless, most of these results are tailored for specific types of dependency and thus they are not suitable or it is not clear how to translate these results into our setting. The recent approach of \citet{Beutner2014} provides a general framework which can be applied to right-censored data, however its application is limited to very well-behaved cases. This is mainly because such an approach is based on an integration-by-parts argument, requiring the function $K$ to be locally of bounded variation, and thus denying the possibility of working with simple kernels like $K(x,y) = \ind_{\{x+y>1\}}$. Also, it is required to establish convergence of $\sqrt{n}(\widehat{F}_n - F)$ to a limit process (under an appropriate metric), leading to stronger conditions than the ones considered in our approach. Moreover, such a general result is less informative about the limit distribution than ours.
  
In this paper, we obtain limit results for Kaplan-Meier V-statistics. Our proof is based on two steps. First, we find an asymptotic canonical V-statistic representation of the Kaplan-Meier V-statistic, and second, we use such a representation to obtain limit distributions under an appropriate normalisation. We also obtain similar results for Kaplan-Meier U-statistics. Our results not only provide convergence to the limit distribution, but we also find closed-form expressions for the asymptotic mean and variance.

Applications to goodness-of-fit are provided. In particular, we study a slight modification of the Cramer-von Mises statistic under right-censoring that can be represented as a Kaplan-Meier V-statistic. Under the null hypothesis, we find its asymptotic null distribution, and we obtain closed-form expressions for the asymptotic mean and variance under a specific censoring distribution. Our results agree with those obtained by \citet{koziol1976cramer}. We also provide an application to hypothesis testing using the Maximum Mean Discrepancy (MMD), a popular
distance between probability measures frequently used in the Machine Learning community. Under the null hypothesis and assuming tractable forms for $F$ and $G$, we obtain the asymptotic limit distribution, as well as the asymptotic mean and variance of the test-statistic. 

Our results hold under conditions that are  quite reasonable, in the sense that they require integrability of terms that are very close to the variance of the limit distribution. Compared to the closest works to ours, the approach of Bose and Sen \citep{bose2002asymptotic} and the IPCW approach \citep{datta2010inverse},  our conditions are much weaker and easy to verify. We explicitly compare such conditions in Section~\ref{sec:comparisonconditions}.

\subsection{Notation}\label{sec:NotaAsu}
We establish some general notation that will be used throughout the paper. We denote $\R_+ = (0,\infty)$. Let $f:\R_+\to\R_+$ be an arbitrary right-continuous function, we define $f(x-) = \lim_{h \to 0}f(x-|h|)$ and  $(\Delta f)(x)=f(x)-f(x-)$. In this work we make use of standard  asymptotic notation \cite{janson2011probability} (e.g. $O_p$, $o_p$, $\Theta_p$, etc.) with respect to the number of sample points $n$. In order to avoid large parentheses, we write $X = O_p(1)Y$ instead of $X= O_p(Y)$, especially if the expression for $Y$ is very long. Given a sequence of stochastic processes $(W_n(x): x\in \mathcal X)$, depending on the number of observations $n$, and a function $f(x)>0$, we say that $W_n(x) = O_p(f(x))$ uniformly on $x\in A_n$, if and only if $\sup_{x\in A_n}\frac{|W_n(x)|}{f(x)} = O_p(1)$, where $A_n\subseteq \mathcal X$ is a set that may depend on $n$. 
\subsubsection{Right-censored data}

Right-censored data consists of pairs $((X_i,\Delta_i))_{i=1}^n$, where $X_i=\min(T_i,C_i)$  denotes the minimum between  survival times of interest $T_i\overset{i.i.d.}{\sim} F$ and  censoring times $C_i\overset{i.i.d.}{\sim}G$,  and  $\Delta_i=\ind_{\{T_i\leq C_i\}}$ is an indicator of whether we actually observe the survival time $T_i$ or not, that is, $\Delta_i=1$ if $T_i\leq C_i$, and $\Delta_i=0$ otherwise. We assume the survival times $T_i$'s are independent of the censoring times $C_i$'s which is known as \emph{non-informative} censoring, and it is a standard assumption in applications.

We denote by $S(x)=1-F(x)$ and by $\Lambda(x)= \int_{(0,x]} S(t-)^{-1}dF(t)$, respectively, the survival and cumulative hazard functions associated with the  survival times $T_i$. The common distribution function associated with the observed right-censored times $X_i=\min\{T_i,C_i\}$ is denoted by $H$. Note that $1-H(x)= (1-G(x))S(x)$ due to the non-informative censoring assumption. For simplicity, we assume that $F$ and $G$ are measures on $\R_+$, otherwise, we can apply an increasing transformation to the random variables, e.g. $e^{X_i}$. Notice that we do not impose any further restriction to the distribution functions, particularly, $F$ and $G$ are allowed to share discontinuity points.

\subsubsection{Kaplan-Meier estimator}
The Kaplan-Meier estimator \citep{kaplan1958nonparametric} is the non-parametric maximum likelihood estimator of $F$ in the setting of right-censored data. It is defined as $\widehat{F}_n(x)=\sum_{X_{[i:n]}\leq x} W_i$, where $W_i=\frac{\Delta_{[i:n]}}{n}\prod_{j=1}^{i-1}\left(1+\frac{1-\Delta_{[j:n]}}{n-j}\right)$ are the so-called  Kaplan-Meier weights, $X_{[i:n]}$ is the $i$-th order statistic of the sample $X_1,\ldots,X_n$, and $\Delta_{[i:n]}$ is its corresponding censor indicator.  To be very precise, ties within uncensored times or within censored times are ordered arbitrarily, while ties among uncensored and censored times are ordered such that censored times appear later.  Observe that when all the observations are uncensored, that is, when $\Delta_i=1$ for all $i$, each weight $W_i$ becomes to $1/n$ and thus $\widehat{F}_n$ becomes the empirical distribution of $T_1,\ldots,T_n$. Finally, we denote by $\widehat{S}_n(x)=1-\widehat{F}_n(x)$  the corresponding estimator of $S(x)$.

\subsubsection{Counting processes notation}\label{sec:notationCounting}
In this work we use standard Survival Analysis/Counting Processes notation. For each $i\in\{1,\ldots,n\}$ we define the individual and pooled counting processes by $N_i(t) = \ind_{\{X_i \leq t\}}\Delta_i$ and $N(t) = \sum_{i=1}^n N_i(t)$ respectively. Notice that the previous processes are indexed in $t\geq 0$. Similarly, we define the individual and pooled risk functions by $Y_i(t) = \Ind_{\{X_i \geq t\}}$ and $Y(t) = \sum_{i=1}^n Y_i(t)$, respectively. 

We assume that all our random variables are defined in a common filtrated probability space $(\Omega, \mathcal F, (\mathcal F_t)_{t\geq 0}, \Prob)$,  where $\mathcal F_t$ is generated by
\begin{align*}
\{\Ind_{\{X_i \leq s, \Delta = 1\}},\Ind_{\{X_i \leq s, \Delta = 0\}}: 0\leq s\leq t, i\in\{0,\ldots,n\}\},
\end{align*}
and the $\Prob$-null sets of $\mathcal F$. It is well-known that $N_i(t)$, $N(t)$ and the Kaplan-Meier estimator $\widehat F_n(t)$ are adapted to  $(\mathcal{F}_t)_{t\geq 0}$,  and  that $Y_i(t)$ and $Y(t)$ are  predictable processes with respect to $(\mathcal{F}_t)_{t\geq 0}$. Yet another well-known fact is that $N_i(t)$ is increasing  and its compensator is given by $\int_{(0,t]} Y_i(s)d\Lambda(s)$. We define the individual and pooled $(\mathcal F_t)$-martingales by $M_i(t) = N_i(t)-\int_{(0, t]}Y_i(s)d\Lambda(s)$ and $M(t) = \sum_{i=1}^n M_i(t)$, respectively. 

For a martingale $W$, we denote by $\langle W\rangle$  its predictable variation process, and by $[W ]$ its quadratic variation process.

Due to the simple nature of the processes that appear in this work, i.e. counting processes, checking integrability and/or square-integrability is very simple and thus we state these properties without giving an explicit proof. For more information about counting processes in survival analysis we refer to \cite{Flemming91}.

\subsubsection{Interior and Exterior regions}\label{sec:notationinteriorExterior}
Let $I_H=\{t>0: H(t-)<1\}$ be the interval in which $X_i=\min\{T_i,C_i\}$ takes values. Define $\tau=\tau_H=\sup\{t:1-H(t)>0\}$ and notice that $\tau \in I_H$ if and only if $H$ has a discontinuity at $\tau$. 

Define $\tau_n = \max\{X_1,\ldots, X_n\}$. We denote by $I=(0, \tau_n]$ the \emph{interior} region in which we observe data points, and by $E=I_H \setminus I$ the \emph{exterior} the region. Notice that both $I$ and $E$ depend on $\tau_n$ even if we do not explicitly write it.

In this work, the integral symbol $\int_a^b$ means integration over the interval $(a,b]$, unless we state otherwise. An exception to this rule is when we integrate over the interval $I_H$, in which case, instead of writing $\int_{I_H}$, we write $\int_0^{\tau}$ and define  $\int_0^{\tau}=\int_{(0,\tau]}$ if $\tau\in I_H$ and $\int_0^{\tau}=\int_{(0,\tau)}$ if $\tau\not\in I_H$. 

Lastly, let $g:\R_+\to\R$ be an arbitrary function, then $\int_{\R_+}g(x)d\widehat{F}_n(x)=\int_{0}^\tau g(x)d\widehat{F}_n(x)=\int_{0}^{\tau_n}g(x)d\widehat{F}_n(x)$ as $\widehat{F}_n(t)=\widehat{F}_n(\tau_n)$ for all $t>\tau_n$. The same holds when integrating with respect the martingale $M(t)$. 

\subsubsection{Efron and Johnstone's Forward Operator} 

We consider the forward  operator $A:L^2(F)\to L^2(F)$, independently introduced by \citet{ritov1988} and \citet{Efron1990}, defined by
\begin{align}
(Ag)(x) &= g(x)-\frac{1}{S(x)}\int_{x}^{\infty} g(s)dF(s).\nonumber
\end{align}
 For  a bivariate function $g:\R_+^2\to\R$ such that $g\in L^2(F \times F)$, we denote by $A_i$, the operator $A$ applied only to the $i$-th coordinate of the function $g$, e.g.  $(A_2g)(x,y) = g(x,y)- \frac{1}{S(y)}\int_{y}^{\tau}g(x,s)dF(s)$. Note that $A_1$ and $A_2$ commute.

Similarly, for $g:\R_+\to\R$ such that $g\in L^2(F)$, we define $\widehat A:L^2(F)\to L^2(F)$, as
\begin{align}
(\widehat A g)(x) =  \begin{cases} 
      g(x)-\frac{1}{S(x)}\int_x^{\tau_n}g(s) dF(s), & 0<x\leq \tau_n\\
      0, & x>\tau_n
   \end{cases}.\nonumber
\end{align}
Observe that the difference between $\widehat A$ and the forward operator A is the upper limit of integration. For bivariate functions, we define $\widehat A_i$ as the operator $\widehat A$ applied only to the $i$-th coordinate.

Notice that for $x \leq \tau_n$,
\begin{align}
((\widehat{A}-A)g)(x)&=\frac{1}{S(x)}\left(\int_x^\infty g(s)dF(s)-\int_x^{\tau_n} g(s)dF(s)\right)\nonumber\\
&=\frac{1}{S(x)}\int_{\tau_n}^\infty g(s)dF(s).\label{eqn:diffAes}
\end{align}
\sloppy{Also, notice that if $g(x)=1$ is a constant function, then $(Ag)(x)=1-\frac{1}{S(x)}\int_x^\infty 1dF(s)=0$, and $(\widehat{A}g)(x)=1-\frac{S(x)-S(\tau_n)}{S(x)}=\frac{S(\tau_n)}{S(x)}$. Finally, observe that the definitions of $A$ and $\widehat A$ depend only on $F$ and it does not consider the censoring distribution $G$.}

\section{Main Results}
The  Kaplan-Meier V-statistic associated with $\theta(F)$ is defined by
\begin{align*}
\theta(\widehat{F}_n)&= \int_{0}^{\tau_n}\int_{0}^{\tau_n} K(x,y) d\widehat{F}_n(x) d\widehat{F}_n(y)=\sum_{i=1}^n \sum_{j=1}^n W_i W_j K(X_{[i:n]}, X_{[j:n]}),
\end{align*}
where the second equality follows from the definition of the Kaplan-Meier estimator $\widehat{F}_n$.

 \citet{bose1999strong} proved that 
 \begin{align*}
 \theta(\widehat{F}_n)\overset{a.s.}{\to} \theta(F;\tau)= \int_{0}^{\tau}\int_{0}^{\tau} K(x,y)dF(x)dF(y),
 \end{align*}
as $n$ approaches infinity. Notice that the limit  $\theta(F;\tau)$ has a dependency on $\tau$ since the data is right-censored, and thus we do not observe any survival time beyond the time $\tau$.  Consider the difference $\Inte-\theta(F;\tau)$, and notice it can be decomposed into two error terms:
\begin{align}
\Inte-\theta(F;\tau) &= \int_{0}^{\tau}\int_{0}^{\tau} K(x,y) (d\widehat{F}_n(x)d\widehat{F}_n(y)-dF(x)dF(y))\nonumber\\
&= 2\alpha(F,\widehat{F}_n)+\beta(F,\widehat{F}_n),\label{eqn:expansionI-T}
\end{align}
where 
\begin{align}
\alpha(F,\widehat{F}_n)= \int_{0}^{\tau}\phi(y)d(\widehat{F}_n-F)(y),\nonumber
\end{align}
where the projection $\phi$ is given by
\begin{align}
\phi(y)=\int_{0}^{\tau} K(x,y)dF(x),\label{eqn:defiphi}
\end{align}
and
\begin{align}
\beta(F,\widehat{F}_n)&=\int_{0}^{\tau}\int_{0}^{\tau} K(x,y)d(\widehat{F}_n-F)(x)d(\widehat{F}_n-F)(y).\label{eqn:defiBeta}
\end{align}
Note that in our setting, our definition of $\phi$ integrates up to $\tau$, instead of the whole support of $F$ as in Equation~\eqref{eqn:random1294hsd9} used in the uncensored setting. To ease notation, we write $\alpha$ and $\beta$ instead of $\alpha(F,\widehat{F}_n)$ and $\beta(F,\widehat{F}_n)$, respectively. Each error term $\alpha$ and $\beta$ can be seen as a first and second order approximation of the difference $\Inte - \theta(F;\tau)$. That being said, we expect that the error term $\alpha$ is of a much larger order than $\beta$. Indeed, it holds that $\alpha = O_p(n^{-1/2})$ and $\beta = O_p(n^{-1})$. This suggests the use of two different scaling factors, splitting our main result into two cases: the \emph{non-degenerate} and the \emph{degenerate} case. In the first case, we will show that, under appropriate conditions, $\sqrt{n}(\Inte-\theta(F;\tau))$ converges in distribution to a zero-mean normal random variable when $n$ approaches infinity. This result will follow from proving a normal limit distribution for the scaled  error term $\sqrt{n}\alpha$ and an in-probability convergence to zero of the scaled error $\sqrt{n}\beta$. In the degenerate case, the error term $\alpha$ is trivially 0, and thus we only care about the term $\beta$. We will show that $n\beta$ converges in distribution to a linear combination of (potentially infinity) independent $\chi^2$ random variables plus a constant. From these results, we will be able to derive analogue results to those in the canonical V-statistics setting.

To express our results and conditions, we define the kernel $K':\R_+^2\to\R$ as $K'(x,y)=(A_1A_2K)(x,y)$, which, by the definition of the operators $A_1$ and $A_2$, is equal to
\begin{align}\label{eqn:defK'}
K(x,y)-\int_{x}^{\tau}K(s,y)\frac{dF(s)}{S(x)}-\int_{y}^{\tau}K(x,t)\frac{dF(t)}{S(y)}+\int_{x}^{\tau}\int_{y}^{\tau}K(s,t)
\frac{dF(s)}{S(x)}\frac{dF(t)}{S(y)}.
\end{align}
We introduce two sets of conditions, one for the non-degenerate case,  and  one for the degenerate case.
\begin{condition}[non-degenerate case: scaling factor $\sqrt{n}$]\label{assu:nondegenerate}
Assume the following conditions hold:
\begin{enumerate}
\item[i)] $\int_{0}^{\tau} \int_{0}^{\tau}\frac{ K(x,y)^2 }{(1-G(x-))}dF(y)dF(x)<\infty$,
\item[ii)] $\int_{0}^{\tau} \left(\frac{S(x-)}{1-G(x-)}\right)^{1/2}|K'(x,x)|dF(x)<\infty$.
\end{enumerate}
\end{condition}

\begin{condition}[degenerate case: scaling factor $n$]\label{assu:degenerate}
Assume the following conditions hold:
\begin{enumerate}
\item[i)] $\int_{0}^{\tau}\int_{0}^{\tau} \frac{K(x,y)^2}{(1-G(x-))(1-G(y-))}dF(x)dF(y)<\infty$,
\item[ii)] $\int_{0}^{\tau} \frac{|K'(x,x)|S(x)}{1-H(x-)}dF(x)<\infty$.
\end{enumerate}
\end{condition}
Notice that Condition~\ref{assu:degenerate} implies Condition~\ref{assu:nondegenerate}. 

\subsection{Results for Kaplan-Meier V-statistics}
\textbf{i) The non-degenerate case:  $\sqrt{n}$-scaling:}
Equation \eqref{eqn:expansionI-T} states $\sqrt{n}(\theta(\widehat{F}_n)-\theta(F;\tau))=2\sqrt{n}\alpha+\sqrt{n}\beta$. Recall that $\alpha$ is defined in terms of the projection $\phi$ defined in Equation~\eqref{eqn:defiphi}. Then, the main result follows under Condition~\ref{assu:nondegenerate} from a standard application of the Central Limit Theorem (CLT) derived by \citet{akritas2000} for univariate Kaplan-Meier integrals and by proving $\sqrt{n}\beta\overset{\Prob}{\to}0$. Akritas proved
\begin{align*}
\sqrt{n}\alpha\overset{\mathcal{D}}{\to}N(0,\sigma^2),
\end{align*}
where 
\begin{align}\label{eqn:varianceNondege}
\sigma^2= \int_{0}^\tau \frac{S(x)}{1-H(x-)}(A \phi)^2(x)dF(x).
\end{align}
As noticed by \citet{Efron1990}, $\sigma^2$ is finite if $\int_0^\tau \frac{\phi(x)^2}{1-G(x-)}dF(x)<\infty$, which is implied by Condition \ref{assu:nondegenerate}. 

\begin{theorem}\label{thm:convergenceNondege} Under Condition~\ref{assu:nondegenerate}, it holds
\begin{align}
\sqrt{n}\alpha \overset{\mathcal D}{\to} N(0,\sigma^2), \quad \text{and} \quad \sqrt{n}\beta \overset{\Prob}{\to} 0,\nonumber
\end{align}
and thus
\begin{align}
\sqrt{n}(\Inte-\theta(F))\overset{\mathcal D}{\to} N(0,4\sigma^2),\nonumber
\end{align}
where $\sigma<\infty$ is given in Equation~\eqref{eqn:varianceNondege}.
\end{theorem}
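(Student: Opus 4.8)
The plan is to establish the two convergences $\sqrt{n}\alpha\overset{\mathcal D}{\to}N(0,\sigma^2)$ and $\sqrt{n}\beta\overset{\Prob}{\to}0$ separately, and then combine them through the decomposition $\sqrt{n}(\theta(\widehat{F}_n)-\theta(F;\tau))=2\sqrt{n}\alpha+\sqrt{n}\beta$ from Equation~\eqref{eqn:expansionI-T} via Slutsky's theorem, which produces the limit $N(0,4\sigma^2)$. The first convergence costs almost nothing: $\alpha=\int_0^\tau\phi\,d(\widehat{F}_n-F)$ is a univariate Kaplan-Meier integral of the \emph{fixed} function $\phi$ of \eqref{eqn:defiphi}, so the CLT of \citet{akritas2000} applies directly, provided its hypothesis $\int_0^\tau \phi(x)^2/(1-G(x-))\,dF(x)<\infty$ holds. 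This is implied by Condition~\ref{assu:nondegenerate}(i): by Cauchy--Schwarz against $dF$ (a sub-probability measure on $(0,\tau]$), $\phi(y)^2=\left(\int_0^\tau K(x,y)dF(x)\right)^2\leq\int_0^\tau K(x,y)^2dF(x)$, and integrating $\phi(y)^2/(1-G(y-))$ against $dF(y)$ gives exactly the finite quantity of Condition~\ref{assu:nondegenerate}(i) (using symmetry of $K$). In particular $\sigma^2<\infty$ as defined in \eqref{eqn:varianceNondege}.

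The substance of the argument is $\sqrt{n}\beta\overset{\Prob}{\to}0$; I expect to prove the stronger statement $\beta=O_p(n^{-1})$. The strategy is to pass from Kaplan-Meier integrals to stochastic integrals against the pooled martingale $M$ using the forward operator. The key tool is a univariate representation of the form $\int_0^{\tau_n}g\,d(\widehat{F}_n-F)(x)=\int_0^{\tau_n}(\widehat A g)(x)\,\frac{\widehat{S}_n(x-)}{Y(x)}\,dM(x)$, valid up to a negligible exterior remainder governed by \eqref{eqn:diffAes}, where the predictable integrand ratio behaves like $S(x)/(n(1-H(x-)))$; a variance computation against $d\langle M\rangle\approx n(1-G(x-))dF$ reproduces $\sigma^2$, confirming the correct normalisation. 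Applying this representation in each coordinate of $\beta$, and replacing $\widehat A_1\widehat A_2 K$ by $K'=A_1A_2K$ of \eqref{eqn:defK'} at the cost of controllable exterior terms, I would write $\beta$ as the double stochastic integral $\int_0^{\tau_n}\int_0^{\tau_n}K'(x,y)\,\frac{\widehat{S}_n(x-)}{Y(x)}\frac{\widehat{S}_n(y-)}{Y(y)}\,dM(x)\,dM(y)$ plus a remainder.

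Writing $M=\sum_i M_i$, this double integral splits into an off-diagonal part ($i\neq j$) and a diagonal part ($i=j$). The off-diagonal part is a centred double martingale sum; bounding the ratios uniformly and computing its predictable variation shows, under Condition~\ref{assu:nondegenerate}(i), that it has variance of order $n^{-2}$, hence is $O_p(n^{-1})$. The diagonal part arises from the single jump of each $M_i$ and produces a term essentially of the form $\sum_i(\Delta M_i)^2 K'(X_i,X_i)\,\frac{\widehat{S}_n(X_i-)^2}{Y(X_i)^2}$, whose order is controlled by $\int_0^\tau |K'(x,x)|\,(\text{weight})\,dF(x)$; the square-root weight $\left(S(x-)/(1-G(x-))\right)^{1/2}$ appearing in Condition~\ref{assu:nondegenerate}(ii) is precisely what is needed to bound this contribution and show it is $o_p(n^{-1/2})$.

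The hard part will be making the martingale representation fully rigorous. This requires establishing the univariate identity with an explicit remainder, justifying its double application despite the inner integral itself being random (so the substitution must be controlled with uniform bounds on $\widehat{S}_n(x-)/S(x)$ and on $n/Y(x)$), and --- most delicately --- controlling every integrand near the upper endpoint $\tau_n$, where $Y(x)$ is small and the ratios can blow up. Handling the exterior region $E=I_H\setminus(0,\tau_n]$ and the $\widehat A$ versus $A$ discrepancy through \eqref{eqn:diffAes}, together with Glivenko--Cantelli-type uniform control of $\widehat{S}_n(x-)/S(x-)$ and $Y(x)/(n(1-H(x-)))$ on $(0,\tau_n]$, is where the bulk of the technical work lies. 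Once $\sqrt{n}\beta\overset{\Prob}{\to}0$ is in hand, Slutsky's theorem applied to $2\sqrt{n}\alpha+\sqrt{n}\beta$ yields $\sqrt{n}(\theta(\widehat{F}_n)-\theta(F;\tau))\overset{\mathcal D}{\to}N(0,4\sigma^2)$, which under the convention $\theta(F)=\theta(F;\tau)$ (i.e. $F$ placing no mass beyond $\tau$) is the claimed statement.
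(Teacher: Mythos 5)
Your plan is essentially the paper's proof: the same decomposition $2\sqrt{n}\alpha+\sqrt{n}\beta$, Akritas's CLT plus the Cauchy--Schwarz reduction of Condition~\ref{assu:nondegenerate}.i for $\alpha$ (Corollary~\ref{cor:I1nondege}), the Duhamel/forward-operator passage from $\beta$ to the double stochastic integral with kernel $K'$ after splitting off the exterior region (Lemmas~\ref{lemma:exteriorN} and~\ref{lemma:nKMintoMartingale}), and the diagonal/off-diagonal split of that integral with Condition~\ref{assu:nondegenerate}.ii governing the diagonal (Lemmas~\ref{lemma:sqrtnQD} and~\ref{lemma:snNDQ}). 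Two places where your sketch, taken literally, would not go through are worth naming.

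First, the promised strengthening $\beta=O_p(n^{-1})$ is not available under Condition~\ref{assu:nondegenerate} alone: your own diagonal bound is only $o_p(n^{-1/2})$, and the same is true of the exterior contributions (e.g.\ $\beta_{E\times E}$ is controlled through $n(1-H(\tau_n))=O_p(1)$ and yields $o_p(n^{-1/2})$ under Condition~\ref{assu:nondegenerate}; the $n$-rate is precisely what Condition~\ref{assu:degenerate} buys in the paper). This is harmless, since the theorem needs only $\sqrt{n}\beta=o_p(1)$. Second, and more substantively, ``computing its predictable variation shows \ldots variance of order $n^{-2}$'' cannot be meant as a genuine variance computation: the integrand contains $\widehat{S}_n(x-)/Y(x)$, for which Proposition~\ref{prop:ProbBounds} supplies only in-probability bounds with no $L^2$ domination, and the integral is evaluated at the random time $\tau_n$, so $\E\bigl(Q_C(\tau_n)^2\bigr)$ is not controlled under the stated conditions. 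The workable route --- which your mention of predictable variation gestures toward and which the paper follows --- is Lenglart--Rebolledo applied twice: once to the off-diagonal martingale $Q_C$ (square-integrable by Theorem~\ref{thm:doubleMartingale}), reducing the claim to $n\langle Q_C\rangle(\tau_n)=o_p(1)$, and a second time to the nested submartingale $Z(t)=\int_0^\tau \frac{S(y)}{1-H(y-)}M^\star_y(t)^2dF(y)$ of Lemma~\ref{lemma:IntegralOfMartingales}, because $\langle Q_C\rangle(\tau_n)$ itself still contains a squared stochastic integral; the exterior lemmas additionally need the replacement of $\tau_n$ by the deterministic time $T_n=\inf\{t: H(t)\geq 1-C/n\}$. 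Finally, your outline tacitly assumes that Condition~\ref{assu:nondegenerate}.i controls the integrals of $(A_1K)^2$ and $(A_1A_2K)^2$ appearing in these compensators; that step is not Cauchy--Schwarz but the Efron--Johnstone-type $L^2$-bound for $R=\Id-A$ (Equation~\eqref{eqn:opRP1}, Lemma~\ref{Lemma:FiniteOpe}), which is the one ingredient your sketch omits. With these repairs your plan coincides with the paper's argument.
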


\textbf{ii) The degenerate case: $n$-scaling.} The previous result considers that $\sigma^2>0$. Notice this is not satisfied if $\alpha=0$ as it implies $\sigma^2=0$. In turn, $\alpha=0$ can be deduced from either of the following conditions of the projection $\phi$ defined in Equation~\eqref{eqn:defiphi}: i) $\phi(x)=0$, $F$-a.s., or ii) $\phi(x)=c$, $F$-a.s., for some non-zero constant $c$  and $\widehat{S}_n(\tau)=S(\tau)$ a.s. for all  $n$ large enough. In the theory of V- and U-statistics, these conditions are known as the \textit{degeneracy properties}. In such a case, the $\sqrt{n}$-scaling does not capture the nature of the asymptotic distribution of $\Inte-\theta(F;\tau)$, suggesting that we need to consider a larger scaling factor. 

Recall $\alpha=\int_0^\tau \phi(x)(d\widehat{F}_n-dF)(x)$. Then, it is straightforward to verify, that i) $\phi(x)=0$, $F$-a.s., implies $\alpha=0$, and that ii) if $\phi(x)=c$, $F$-a.s., with $c \neq 0$, then $\alpha=c(\widehat{F}_n(\tau)-F(\tau))=0$ if and only if $\widehat{F}_n(\tau)=F(\tau)$ a.s. for all large $n$ (notice this condition is trivially satisfied in the uncensored case). In those cases the information of the limit distribution is contained in the term $\beta$.

Define $J:(I_H\times\{0,1\})^2\to\R$
\begin{align}
J((x,r),(x',r'))&= \int_0^{\tau}\int_0^{\tau} \frac{K'(s,t)}{(1-G(s-))(1-G(t-))}dm_{x,r}(s)dm_{x',r'}(t),\label{eqn:Jfunction}
\end{align}
where $dm_{x,r}(s)=r\delta_x(s)-\ind_{\{x\geq s\}}d\Lambda(s)$, and notice that $dm_{X_i,\Delta_i}=dM_i$, and recall that $M_i$ is the $i$-th individual martingale defined in Section~\ref{sec:notationCounting}.

\begin{theorem}\label{thm:decompositionDege}
Under Condition~\ref{assu:degenerate}, it holds that
\begin{align}
n\beta \overset{\mathcal D}{\to} \int_{0}^{\tau}\frac{S(x)}{1-H(x-)}K'(x,x)dF(x)+ \Psi\nonumber
\end{align}
where $\Psi = \sum_{i=1}^{\infty} \lambda_i (\xi_i^2-1)$, $\xi_i\overset{i.i.d}{\sim}N(0,1)$ and the  $\lambda_i$'s are the eigenvalues associated with the integral operator $T^J:L^2(X,\Delta)\to L^2(X,\Delta)$, defined as
\begin{align*}
(T^Jf)(X_1,\Delta_1)=\E\left(J((X_1,\Delta_1),(X_2,\Delta_2))f(X_2,\Delta_2)\given (X_1,\Delta_1)\right),
\end{align*} 
where $L^2(X,\Delta)$ is the space of square-integrable functions with respect the measure induced by $(X_1,\Delta_1)$.

Moreover, $\E(\Psi) = 0$ and \begin{align}
\E(\Psi^2) = 2\int_{0}^{\tau}\int_{0}^{\tau}
\frac{K'(x,y)^2S(x)S(y)}{(1-H(x-))(1-H(y-))}dF(x)dF(y).\nonumber
\end{align}
\end{theorem}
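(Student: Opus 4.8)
The plan is to reduce $n\beta$ to a degenerate V-statistic in the i.i.d.\ pairs $(X_i,\Delta_i)$ and then invoke the classical theory of U-/V-statistics. The crucial step is to establish the \emph{canonical representation}
\begin{align}
n\beta = \frac{1}{n}\sum_{i=1}^n\sum_{j=1}^n J_{ij} + o_p(1), \qquad J_{ij}:=J((X_i,\Delta_i),(X_j,\Delta_j)).\nonumber
\end{align}
To obtain it I would apply, in each coordinate of $\beta=\int_0^\tau\int_0^\tau K\,d(\widehat F_n-F)\,d(\widehat F_n-F)$, the martingale representation of a univariate Kaplan--Meier integral underlying Akritas' CLT: combining the Duhamel/Gill identity $\widehat S_n(t)/S(t)-1=-\int_0^t \widehat S_n(x-)S(x)^{-1}Y(x)^{-1}dM(x)$ with integration by parts, one rewrites $\int_0^{\tau_n} g\,d(\widehat F_n-F)$ as $\frac1n\sum_i\int_0^\tau (\widehat A g)(x)(1-G(x-))^{-1}dM_i(x)$ up to lower-order terms. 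Applying this in both variables turns the bilinear form into a double martingale integral in which the commuting operators $A_1,A_2$ transform $K$ into $K'=A_1A_2K$ and the weights $(1-G(s-))^{-1}(1-G(t-))^{-1}$ appear; replacing the plug-in estimates by their population counterparts then yields $\frac1n\sum_{i,j}J_{ij}$ plus a remainder. Showing that this remainder is $o_p(1/n)$ \emph{before} the $n$-scaling is the main obstacle, and it is exactly here that the stronger Condition~\ref{assu:degenerate} (not merely Condition~\ref{assu:nondegenerate}) is needed; the bookkeeping is most delicate when $F$ has atoms, where the jump factor $S(x)/S(x-)$ and the discrepancy \eqref{eqn:diffAes} between $\widehat A$ and $A$ must be tracked carefully.

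Granting the representation, I would split $\frac1n\sum_{i,j}J_{ij}=\frac1n\sum_i J_{ii}+\frac1n\sum_{i\neq j}J_{ij}$. For the diagonal, the strong law gives $\frac1n\sum_i J_{ii}\overset{\Prob}{\to}\E(J_{11})$; since the off-diagonal-in-time parts of the iterated integral $J_{11}=\int\int K'(s,t)(1-G(s-))^{-1}(1-G(t-))^{-1}dM_1(s)dM_1(t)$ vanish in expectation, only the temporal diagonal survives, giving $\E(J_{11})=\E\int_0^\tau K'(s,s)(1-G(s-))^{-2}d\langle M_1\rangle(s)$, which by the mean-measure computation below equals $\int_0^\tau \frac{S(x)}{1-H(x-)}K'(x,x)dF(x)$; finiteness is guaranteed by Condition~\ref{assu:degenerate}(ii). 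For the off-diagonal part I would check that $J$ is symmetric (as $K'$ is) and \emph{canonical}: because $M_1,M_2$ are independent and a stochastic integral against $M_1$ has mean zero, $\E(J_{12}\mid (X_2,\Delta_2))=0$. The standard limit theorem for degenerate U-statistics \citep{Koroljuk94} then yields $\frac1n\sum_{i\neq j}J_{ij}\overset{\mathcal D}{\to}\Psi=\sum_i\lambda_i(\xi_i^2-1)$, with $\lambda_i$ the eigenvalues of $T^J$; its hypotheses require $\E(J_{12}^2)<\infty$, which follows from Condition~\ref{assu:degenerate}(i) together with the mapping properties of $A$ noted by \citet{Efron1990}. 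Slutsky's lemma combines the two parts into the stated limit.

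For the moments, independence of the $\xi_i$ and $\E(\xi_i^2-1)=0$ give $\E(\Psi)=0$, while $\Var(\xi_i^2-1)=2$ gives $\E(\Psi^2)=2\sum_i\lambda_i^2=2\|T^J\|_{HS}^2=2\,\E(J_{12}^2)$, the last equality because the symmetric degenerate kernel $J$ satisfies $\E(J_{12}^2)=\sum_i\lambda_i^2$. To evaluate $\E(J_{12}^2)$ I would use independence of $M_1,M_2$ and the It\^o isometry in each coordinate, $\E\big(\int f\,dM_i\big)^2=\E\int f^2\,d\langle M_i\rangle$, with mean measure $\E(d\langle M_i\rangle(s))=(1-H(s-))(1-\Delta\Lambda(s))d\Lambda(s)=\frac{(1-G(s-))S(s)}{S(s-)}dF(s)$, the simplification using $1-\Delta\Lambda(s)=S(s)/S(s-)$, $d\Lambda(s)=dF(s)/S(s-)$ and $1-H(s-)=S(s-)(1-G(s-))$. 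This collapses each factor $(1-G(\cdot-))^{-2}$ to $S(\cdot)/(1-H(\cdot-))$ and yields
\begin{align}
\E(\Psi^2)=2\int_0^\tau\int_0^\tau \frac{K'(x,y)^2 S(x)S(y)}{(1-H(x-))(1-H(y-))}dF(x)dF(y),\nonumber
\end{align}
finite by Condition~\ref{assu:degenerate}(i); the same mean-measure computation is what closes the diagonal term above. Once the first-paragraph representation is in place, everything else is standard V-/U-statistic asymptotics and second-moment computations for counting-process martingales, so the genuine difficulty is entirely the $o_p(1/n)$ remainder control in the atomic case.
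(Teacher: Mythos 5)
Your skeleton coincides with the paper's own proof: the crucial first display, $n\beta=\frac{1}{n}\sum_{i,j}J_{ij}+o_p(1)$, is exactly Theorem~\ref{lemma:afterMartingalenb}; your split into a diagonal handled by the law of large numbers plus a degenerate off-diagonal part handled by the classical theory matches the paper's direct appeal to \citep[Theorem 4.3.2]{Koroljuk94}; and your degeneracy check ($\E(J_{12}\mid(X_2,\Delta_2))=0$ via mean-zero stochastic integrals) and both moment computations through the mean measure $\E(d\langle M_1\rangle(s))=\frac{(1-G(s-))S(s)}{S(s-)}dF(s)$ reproduce, correctly, the calculations of Appendix~\ref{sec:UStatsRepBeta}. (A minor point: the univariate identity \eqref{eqn:quickDhha} is exact, obtained from Duhamel's equation and Fubini rather than integration by parts; the ``lower-order terms'' arise only when $\widehat S_n(x-)/Y(x)$ is replaced by $(n(1-G(x-)))^{-1}$.)

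The genuine gap is that the canonical representation itself --- where essentially all of the technical weight of the theorem sits --- is asserted rather than proved. You correctly flag that the remainder must be $o_p(n^{-1})$ and that Condition~\ref{assu:degenerate} is what makes this possible, but you supply no mechanism, and ``delicate bookkeeping in the atomic case'' does not substitute for one. Concretely, the paper needs: (a) the split $\beta=\beta_{I^2}+\beta_{I^{2c}}$ with the exterior region killed via Cauchy--Schwarz and $n(1-H(\tau_n))=O_p(1)$ (Lemma~\ref{lemma:exteriorN}); (b) the exchange of $\widehat A_1\widehat A_2$ for $A_1A_2$, controlled through \eqref{eqn:diffAes} and the identity $L(\tau_n)=1-\widehat S_n(\tau_n)/S(\tau_n)$ (Lemma~\ref{lemma:nKMintoMartingale}), which you do mention; and, crucially, (c) a martingale theory for double stochastic integrals over $\{x<y\le t\}$ with $\mathcal P$-measurable integrands (Definition~\ref{def:predicDsigmaAlgebra}, Theorem~\ref{thm:doubleMartingale}), without which the remainder processes are not known to be (sub)martingales and no maximal inequality is available. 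The paper then decomposes $Q=Q_D+2Q_C$ and justifies the plug-in replacement $\widehat S_n(x-)/(Y(x)/n)\to(1-G(x-))^{-1}$ separately on the time-diagonal (Lemma~\ref{lemma:nQ_DintoMartingale}, comparing $[M]$ with $\langle M\rangle$ and using Lenglart--Rebolledo) and off it (Lemma~\ref{lemma:transformDMC}, via Theorem~\ref{thm:doubleMartingale}, Lemma~\ref{lemma:IntegralOfMartingales}, the uniform bounds of Proposition~\ref{prop:ProbBounds}, the domination Lemma~\ref{lemma:integralConvergence2}, and the integrability estimates of Lemma~\ref{Lemma:FiniteOpe}). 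Nothing in your sketch explains why these remainders are $o_p(n^{-1})$ rather than merely $o_p(n^{-1/2})$ --- the distinction between Conditions~\ref{assu:nondegenerate} and~\ref{assu:degenerate} enters exactly through these compensator bounds --- so as written your opening display is an assertion equivalent to Theorem~\ref{lemma:afterMartingalenb}, not a proof of it; everything downstream of it, however, is sound.
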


An immediate consequence of the  previous Theorem is the asymptotic behaviour of the degenerate case for the Kaplan-Meier V-statistic.
\begin{corollary}\label{Corollary:Vstatsdege}
Suppose one of the following \textit{degeneracy conditions} hold.  
\begin{enumerate}
\item[i)] $\phi(x)=0$, $F$-a.s.,
\item[ii)] $\phi(x)=c$, $F$-a.s. for some non-zero constant $c$, and exists $N>0$ such that $\widehat{S}_n(\tau) = S(\tau)$ a.s. for all  $n \geq N$.
\end{enumerate}
Then, under Condition~\ref{assu:degenerate},
\begin{align}
n(\Inte-\theta(F;\tau)) \overset{\mathcal{D}}{\to} \int_{0}^{\tau}\frac{S(x)}{1-H(x-)}K'(x,x) dF(x)+ \Psi\nonumber
\end{align}
where $\Psi = \sum_{i=1}^{\infty} \lambda_i (\xi_i^2-1)$ is defined as in Theorem~\ref{thm:decompositionDege}.
\end{corollary}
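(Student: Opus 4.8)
The plan is to reduce the corollary directly to Theorem~\ref{thm:decompositionDege} by showing that, under either degeneracy condition, the first-order term $\alpha$ vanishes, so that the decomposition~\eqref{eqn:expansionI-T} collapses to $\theta(\widehat{F}_n) - \theta(F;\tau) = \beta$ (almost surely, and for all $n$ large enough in case ii). Once this is established, multiplying by $n$ and invoking Theorem~\ref{thm:decompositionDege} — which applies since Condition~\ref{assu:degenerate} is assumed — yields the claimed convergence in distribution and identifies the limit as $\int_{0}^{\tau}\frac{S(x)}{1-H(x-)}K'(x,x)\,dF(x) + \Psi$.

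The substantive step is the verification that $\alpha = 0$. Recall $\alpha = \int_0^\tau \phi(x)\,d(\widehat{F}_n - F)(x)$ with $\phi$ given by~\eqref{eqn:defiphi}. The key observation I would emphasise is that $\widehat{F}_n$ places positive mass only at the uncensored order statistics, i.e. at points $X_{[i:n]}$ with $\Delta_{[i:n]} = 1$, and these are realisations of the genuine survival times $T_j \overset{i.i.d.}{\sim} F$. In case i), $\phi = 0$ $F$-a.s. means $F(\{\phi \neq 0\}) = 0$, hence $\Prob(\phi(T_j)\neq 0) = 0$ for every $j$; a union bound over the countably many pairs of indices $j$ and sample sizes $n$ then shows that, almost surely, $\phi$ vanishes at every atom of every $\widehat{F}_n$. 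Consequently $\int_0^\tau \phi\,d\widehat{F}_n = 0$ a.s., while $\int_0^\tau \phi\,dF = 0$ trivially, so $\alpha = 0$ a.s. In case ii), the same support argument gives $\phi \equiv c$ on the atoms of $\widehat{F}_n$, whence $\int_0^\tau \phi\,d\widehat{F}_n = c\,\widehat{F}_n(\tau)$ and $\int_0^\tau \phi\,dF = c\,F(\tau)$; therefore $\alpha = c\bigl(\widehat{F}_n(\tau) - F(\tau)\bigr) = c\bigl(S(\tau) - \widehat{S}_n(\tau)\bigr)$, which equals $0$ for every $n \geq N$ by hypothesis.

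To conclude, I would assemble the pieces: in case i) the identity $\theta(\widehat{F}_n) - \theta(F;\tau) = \beta$ holds almost surely for all $n$, and in case ii) it holds almost surely for all $n \geq N$; since altering finitely many initial terms does not affect a distributional limit, in both cases $n\bigl(\theta(\widehat{F}_n) - \theta(F;\tau)\bigr)$ and $n\beta$ share the same weak limit, which Theorem~\ref{thm:decompositionDege} supplies. I expect the only genuinely delicate point to be the justification that ``$\phi = 0$ (resp. $c$) $F$-a.s.'' forces $\phi$ to take that value at \emph{every} atom of $\widehat{F}_n$ almost surely: this is precisely where one must exploit that the atoms of $\widehat{F}_n$ are uncensored observations, and hence bona fide $F$-samples, rather than arbitrary points in the support of $H$. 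Everything else is bookkeeping together with a direct appeal to the theorem.
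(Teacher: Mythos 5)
Your proposal is correct and takes essentially the same route as the paper: there, Corollary~\ref{Corollary:Vstatsdege} is deduced as an immediate consequence of Theorem~\ref{thm:decompositionDege} after noting that degeneracy condition i) forces $\alpha=0$ a.s.\ while condition ii) gives $\alpha=c\bigl(\widehat F_n(\tau)-F(\tau)\bigr)=0$ a.s.\ for all $n\geq N$, so that $n\bigl(\theta(\widehat F_n)-\theta(F;\tau)\bigr)=n\beta$ and the theorem applies. Your support argument---that the atoms of $\widehat F_n$ sit at uncensored observations, which are genuine $F$-samples, so $\phi$ takes the value $0$ (resp.\ $c$) at every atom almost surely---merely fills in the step the paper labels ``straightforward to verify,'' and it does so correctly.
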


As a part of the proof, we find an asymptotic representation of $\beta$ as a canonical V- statistic, this representation is as following.
\begin{theorem}\label{lemma:afterMartingalenb}
Under Condition~\ref{assu:degenerate} it holds that
\begin{align*}
\beta = \frac{1}{n^2}\sum_{i=1}^n\sum_{j=1}^n J((X_i,\Delta_i),(X_j,\Delta_j))+o_p(n^{-1}).
\end{align*}
\end{theorem}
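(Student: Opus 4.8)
The plan is to reduce $\beta$ to an explicit double martingale integral and then replace the random, sample-dependent weights and operators by their deterministic limits. The engine is the pointwise martingale identity for univariate Kaplan--Meier integrals, namely that for any integrable $g$,
\begin{align*}
\int_0^{\tau_n} g\, d(\widehat{F}_n - F) = \int_0^{\tau_n}\frac{\widehat{S}_n(x-)}{Y(x)}(\widehat{A} g)(x)\, dM(x),
\end{align*}
which is precisely why the operator $\widehat A$ was introduced with upper limit $\tau_n$; this is the same device underlying the CLT for $\sqrt n\alpha$ quoted from \citet{akritas2000}.

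First I would apply this identity in the $x$-coordinate with $g = K(\cdot, y)$, and then, using Fubini together with the commutativity $\widehat A_1\widehat A_2 = \widehat A_2\widehat A_1$, apply it again in the $y$-coordinate; since $\widehat A_2$ absorbs the inner transform exactly, this gives the exact identity
\begin{align*}
\beta = \int_0^{\tau_n}\int_0^{\tau_n}\frac{\widehat{S}_n(x-)\widehat{S}_n(y-)}{Y(x)Y(y)}(\widehat{A}_1\widehat{A}_2 K)(x,y)\, dM(x)\, dM(y).
\end{align*}
On the other side, substituting $dm_{X_i,\Delta_i} = dM_i$ and $\sum_i dM_i = dM$ rewrites the target as $\int_0^{\tau}\int_0^{\tau} c(x)c(y)(A_1A_2K)(x,y)\,dM(x)\,dM(y)$ with $c(x) = 1/(n(1-G(x-)))$ and $A_1A_2K = K'$; moreover $dM$ vanishes on $(\tau_n,\tau]$ since there $Y=0$ and $dN=0$, so both integrals run over the same effective region $[0,\tau_n]$. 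It then remains to bound the difference of the two integrands, which I would split into an \emph{operator error} $\int\int \widehat c(x)\widehat c(y)[(\widehat A_1\widehat A_2 - A_1A_2)K]\,dM\,dM$ and a \emph{weight error} $\int\int[\widehat c(x)\widehat c(y) - c(x)c(y)](A_1A_2K)\,dM\,dM$, where $\widehat c(x) = \widehat{S}_n(x-)/Y(x)$.

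For the operator error I would use \eqref{eqn:diffAes}, which expresses $(\widehat A - A)g$ as the $F$-mass of $g$ on $(\tau_n,\infty)$ divided by $S$; since $\tau_n\to\tau$ this tail vanishes, and a second-moment estimate for the double martingale integral together with Condition~\ref{assu:degenerate} should push it to $o_p(n^{-1})$. For the weight error I would write $\widehat c = c(1+\varepsilon)$ with $\varepsilon(x) = n(1-G(x-))\widehat{S}_n(x-)/Y(x) - 1$, expand $\widehat c(x)\widehat c(y) - c(x)c(y) = c(x)c(y)(\varepsilon(x)+\varepsilon(y)+\varepsilon(x)\varepsilon(y))$, and use the uniform consistency of $nS(x-)/Y(x)$ and of $\widehat{S}_n/S$, so that $\varepsilon = O_p(n^{-1/2})$ on compact subsets of $[0,\tau)$. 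Since the leading term is $O_p(n^{-1})$, this extra half power of $n$ is exactly what is needed.

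The hard part will be making these bounds uniform up to $\tau$: both the relative error $\varepsilon(x)$ and the tail correction $(\widehat A - A)$ deteriorate near $\tau$, where the risk set $Y(x)$ is small and $\widehat c$ is most volatile, while $\widehat c$ is itself sample-dependent and hence not predictable, so the martingale isometry cannot be invoked directly. The strategy I would adopt is a truncation at $\tau-\delta$: on $[0,\tau-\delta]$ exploit uniform consistency with its rate, and on $(\tau-\delta,\tau_n]$ control the contribution uniformly in $n$ using the integrability in Condition~\ref{assu:degenerate} (whose denominators $1-G$ and $1-H$ are precisely the tail weights arising here), and finally let $\delta\to0$. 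Establishing the requisite uniform-in-$n$ tail bound for a double martingale integral with a non-predictable integrand, to an order $o_p(n^{-1})$ one power finer than the leading term, is the crux of the argument.
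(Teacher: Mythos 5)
Your skeleton (Duhamel double identity, then swap $\widehat A_1\widehat A_2$ for $A_1A_2$, then swap the random weights for $1/(1-G(\cdot-))$) matches the paper's route, but three steps as you state them would fail. First, your ``exact identity'' for $\beta$ is false: the double Duhamel identity \eqref{eqn:quickDuha2} represents only $\beta_{I^2}$, the integral over $(0,\tau_n]^2$, whereas $\beta$ integrates up to $\tau$ and on the exterior region $d(\widehat F_n-F)=-dF\neq 0$. The terms $\beta_{E\times E}$ and $2\beta_{E\times I}$ are not negligible for free; the paper devotes Lemmas~\ref{lemma:psiR4=0} and~\ref{lemma:npsi1-2R2=01} to showing they are $o_p(n^{-1})$, and the key quantitative input is $n(1-H(\tau_n))=O_p(1)$ (Proposition~\ref{prop:ProbBounds}.\ref{prop:ProbBound4}) combined with Cauchy--Schwarz and Condition~\ref{assu:degenerate} --- your appeal to ``$\tau_n\to\tau$ so the tail vanishes'' provides no rate, and the same objection applies to your operator-error term, which in Lemma~\ref{lemma:nKMintoMartingale} is not estimated by a raw second moment but is reduced, via $L(\tau_n)=1-\widehat S_n(\tau_n)/S(\tau_n)=O_p(1)$, back to exactly those exterior integrals. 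Second, your proposal never separates the diagonal: the predictable-variation calculus of Theorem~\ref{thm:doubleMartingale} is formulated on $C_t=\{x<y\leq t\}$ precisely because it does not apply across the diagonal, where the integral produces the quadratic-variation term $\frac1n\int_0^{\tau_n} K'(x,x)(1-G(x-))^{-2}d[M](x)$ --- this is the $i=j$ part of the V-statistic (and the source of the constant in Theorem~\ref{thm:decompositionDege}), handled separately in Lemma~\ref{lemma:nQ_DintoMartingale}, and it is the reason Condition~\ref{assu:degenerate}.ii exists. A blanket ``second-moment estimate for the double martingale integral'' over the full square is not available.

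Third, the premise of what you call the crux is wrong: $\widehat c(x)=\widehat S_n(x-)/Y(x)$ \emph{is} predictable, since both $\widehat S_n(x-)$ and $Y(x)$ are adapted and left-continuous, and products of such processes are $\mathcal P$-measurable (Proposition~\ref{coll:h predictable}); this is exactly what lets the paper apply Theorem~\ref{thm:doubleMartingale} and the Lenglart--Rebolledo inequality directly, with no truncation at $\tau-\delta$ and no convergence rate for your $\varepsilon$. In Lemmas~\ref{lemma:nQ_DintoMartingale} and~\ref{lemma:transformDMC} the weight error $U_n(x)$ enters a compensator, which is an ordinary integral against $dF$; it is killed by pointwise convergence $U_n(x)\to 0$ together with the uniform-in-probability bounds up to the \emph{random} time $\tau_n$ from Gill's inequalities (Proposition~\ref{prop:ProbBounds}.\ref{prop:ProbBound1}--\ref{prop:ProbBound3}), dominated convergence in the form of Lemma~\ref{lemma:integralConvergence2}, and the integrability supplied by Condition~\ref{assu:degenerate} via Lemma~\ref{Lemma:FiniteOpe}. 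In particular $\varepsilon=o_p(1)$ suffices --- your accounting that an extra $n^{-1/2}$ is ``exactly what is needed'' miscounts the orders, since the deterministic-weight double integral scaled by $1/n$ is already $O_p(1)$, so any factor tending to zero in the appropriate dominated sense closes the argument. The deterioration near $\tau$ that you flag as unresolved is thus handled by the Lenglart--Rebolledo device rather than by uniform rates, and without that device (or the exterior-region lemmas) your proposal does not reach the claimed $o_p(n^{-1})$.
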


All the results of this section are proved in Section~\ref{sec:Road Map} except for Theorem~\ref{lemma:afterMartingalenb}, which is proved in Section~\ref{sec:Interior}.

\subsection{Kaplan-Meier U-statistics}
The Kaplan-Meier U-statistic is defined by
\begin{align*}
\theta_U(\widehat{F}_n)&=\frac{\sum_{i\neq j} W_i W_j K(X_{[i:n]}, X_{[j:n]})}{\sum_{i\neq j} W_i W_j}=\frac{\Inte-\sum_{i=1}^nK(X_i,X_i)W_i^2}{\sum_{i\neq j} W_i W_j},
\end{align*} 
where the second equality follows from adding and subtracting the diagonal term $(\sum_{i=1}^nK(X_i,X_i)W_i^2)/(\sum_{i\neq j} W_i W_j)$.

Without loss of generality, assume $\theta(F;\tau)=0$. Then, the asymptotic distribution of $\theta_U(\widehat{F}_n)$ can be related to the one for $\Inte$ by analysing the asymptotic behaviour of  $\sum_{i\neq j}W_iW_j$ and $\sum_{i=1}^nK(X_i,X_i)W_i^2$. For the first term, \citet{bose1999strong} proved that $\sum_{i\neq j}W_iW_j\overset{a.s.}{\to}F(\tau)^2$. For the second term we enunciate the following result, which is proved in \Appendice~\ref{Appendix:SecDiag}. 
\begin{lemma}\label{lemma:diagKnonDege}
If $\int_0^{\tau} |K(x,x)|\sqrt{\frac{S(x-)}{1-G(x-)}}dF(x)<\infty$, then
\begin{align*}
\sqrt{n}\sum_{i=1}^n K(X_i,X_i)W_i^2&=o_p(1).
\end{align*}
Additionally, if $\int_0^{\tau} \frac{|K(x,x)|}{1-G(x-)}dF(x)<\infty$, then
\begin{align}
n\sum_{i=1}^n K(X_i,X_i)W_i^2 &= \int_0^\tau \frac{K(x,x)}{1-G(x-)}dF(x)+o_p(1).\nonumber
\end{align}
\end{lemma}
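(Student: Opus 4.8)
The plan is to reduce the diagonal sum to a one-dimensional Kaplan--Meier integral and then analyse that integral with the tools available for such objects. Writing $g(x)=K(x,x)$ and using the jump representation $d\widehat{F}_n(x)=\widehat{S}_n(x-)\,dN(x)/Y(x)$ of the Kaplan--Meier estimator, the weight of an uncensored order statistic is $W_i=\widehat{S}_n(X_{[i:n]}-)\Delta_{[i:n]}/Y(X_{[i:n]})$, so that (since $\Delta^2=\Delta$)
\begin{align*}
\sum_{i=1}^n K(X_i,X_i)W_i^2 = \int_0^{\tau_n} g(x)\,\frac{\widehat{S}_n(x-)}{Y(x)}\,d\widehat{F}_n(x)=\int_0^{\tau_n} g(x)\,\frac{\widehat{S}_n(x-)^2}{Y(x)^2}\,dN(x).
\end{align*}
The whole point of this identity is that the random factor $\widehat{S}_n(x-)/Y(x)$ behaves like $\{n(1-G(x-))\}^{-1}$, because $\widehat{S}_n(x-)\to S(x-)$ and $Y(x)/n\to 1-H(x-)=S(x-)(1-G(x-))$. (In the presence of ties one groups the tied uncensored times; the counting-process form on the right is the robust version and is the one I would actually manipulate.)

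For the second statement I would compute the mean through the compensator of $N$. Since $g$, $\widehat{S}_n(\cdot-)$ and $Y$ are predictable, $\E(\int g\,\widehat{S}_n(x-)^2 Y^{-2}\,dN)=\E(\int g\,\widehat{S}_n(x-)^2 Y^{-1}\ind_{\{Y>0\}}\,d\Lambda)$, and since $d\Lambda(x)=dF(x)/S(x-)$,
\begin{align*}
\E\Big(n\sum_{i=1}^n K(X_i,X_i)W_i^2\Big)=\int_0^{\tau} g(x)\, n\,\E\!\Big(\tfrac{\widehat{S}_n(x-)^2}{Y(x)}\ind_{\{Y(x)>0\}}\Big)\,\frac{dF(x)}{S(x-)}.
\end{align*}
The pointwise limit $n\,\E(\widehat{S}_n(x-)^2 Y(x)^{-1}\ind_{\{Y>0\}})\to S(x-)^2/(1-H(x-))$, together with a dominated-convergence argument (a standard moment bound for the Kaplan--Meier estimator supplies the integrable envelope $n\,\E(\widehat{S}_n(x-)^2 Y^{-1}\ind_{\{Y>0\}})\le C\,S(x-)^2/(1-H(x-))$, whose integral against $g(x)\,dF(x)/S(x-)$ is finite precisely under the strong hypothesis $\int g/(1-G)\,dF<\infty$), then shows the mean converges to $\int_0^\tau g(x)S(x-)/(1-H(x-))\,dF=\int_0^\tau g/(1-G)\,dF$. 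A companion variance bound, or equivalently invoking strong consistency of Kaplan--Meier integrals for the integrand $g(x)\,n\widehat{S}_n(x-)/Y(x)$ whose limit is $g(x)/(1-G(x-))$, upgrades this to convergence in probability, which is the claim.

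The first statement is the delicate one and is the step I expect to be the main obstacle. Although it looks weaker, it cannot be deduced from the second by a first-moment bound: the hypothesis $\int g\sqrt{S(x-)/(1-G(x-))}\,dF<\infty$ lies \emph{strictly below} the integrability threshold of the mean. Indeed, already in the uncensored case the diagonal equals $n^{-3/2}\sum_i g(X_i)$, and $\E|g(X_1)|$ may be infinite while the weak condition still forces $n^{-3/2}\sum_i g(X_i)\to 0$; the weak condition is exactly what controls $n\,\Prob(|g(X_1)|>\varepsilon n^{3/2})\to 0$. Hence I would argue by truncation. For a cutoff $M$ write (for $g\ge 0$) $g=g\wedge h_M+(g-h_M)^+$ with $h_M(x)=M\sqrt{(1-G(x-))/S(x-)}$, so that the retained part satisfies $(g\wedge h_M)\sqrt{S/(1-G)}\le M$ and the discarded part is supported on $\{g\sqrt{S/(1-G)}>M\}$. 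The discarded part is handled by Markov's inequality and the weak hypothesis, since $\int_{\{g\sqrt{S/(1-G)}>M\}} g\sqrt{S/(1-G)}\,dF\to 0$ as $M\to\infty$; the retained part is, for each fixed $M$, of mean order $n^{-1/2}$ by the second-statement estimate and hence $o_p(1)$.

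The genuine difficulty in this truncation is the behaviour near $\tau$: converting $g/(1-G)$ into the weak-condition integrand produces a factor $\{1-H(x-)\}^{-1/2}$ that blows up at the right endpoint, so neither the retained nor the discarded mean is automatically finite. I would control this by exploiting that $\widehat{F}_n$ charges only $(0,\tau_n]$ and that observations with $1-H(x-)\lesssim 1/n$ are rare: cutting the integral at the level $1-H(x-)\asymp 1/n$ renders the bulk estimate finite, while the near-$\tau$ remainder is estimated directly through the tail of the risk set $Y$, the weak integrability condition once more providing the summable control. Standard bookkeeping then yields $\sqrt{n}\sum_i K(X_i,X_i)W_i^2=o_p(1)$.
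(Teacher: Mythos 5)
Your reduction of the diagonal to the counting-process integral $\int_0^{\tau_n} K(x,x)\,\widehat S_n(x-)^2\,Y(x)^{-2}\,dN(x)$ is exactly the paper's starting point, and your diagnosis that the $\sqrt{n}$-statement is the delicate one is also correct. But your truncation scheme for it has a genuine gap that your own closing paragraph half-admits and does not repair. For fixed $M$ the retained part $g\wedge h_M$ does \emph{not} have mean of order $n^{-1/2}$: the relevant integral $\int (g\wedge h_M)/(1-G)\,dF \le M\int (1-H(x-))^{-1/2}dF(x)$ is infinite in general (e.g.\ $1-G=S^a$ with $a\ge 1$). After your proposed cut at the level $1-H(x-)\asymp 1/n$, the bulk bound becomes $n^{-1/2}\cdot O(M\sqrt{n}) = O(M)$, which is $O_p(1)$, not $o_p(1)$, so the split into retained and discarded parts yields $O(M)+o_M(1)$ and never converges to zero; and the near-$\tau$ remainder, which you defer to "standard bookkeeping," is precisely the crux of the lemma. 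In fact the $M$-truncation is a detour: what actually closes the argument is a factor that vanishes \emph{pointwise} while being dominated by the weak-condition integrand, and this is how the paper proceeds. There, $\int_0^t |K(x,x)|\widehat S_n(x-)^2Y(x)^{-2}dN(x)$ is a submartingale whose compensator at $\tau_n$, after the uniform in-probability bounds $\widehat S_n(x-) = O_p(1)S(x-)$ and $Y(x)/n \ge \beta(1-H(x-))$ (Proposition~\ref{prop:ProbBounds}), equals
\begin{align*}
O_p(1)\int_0^{\tau}\frac{\ind_{\{x\le\tau_n\}}}{\sqrt{Y(x)}}\,|K(x,x)|\sqrt{\frac{S(x-)}{1-G(x-)}}\,dF(x),
\end{align*}
where $\sqrt{n}/Y(x)$ was split as $Y(x)^{-1/2}\cdot\sqrt{n/Y(x)}$ and $1-H=S(1-G)$ was used. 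Since $Y(x)\ge 1$ on $\{x\le\tau_n\}$ and $Y(x)\to\infty$ pointwise, dominated convergence (the envelope being exactly your weak hypothesis) gives $o_p(1)$, and Lenglart--Rebolledo transfers this to the integral itself. Note that Lenglart--Rebolledo is the key device here: it only requires the \emph{random} compensator to vanish in probability, which sidesteps the infinite-expectation problem near $\tau$ that sinks your deterministic-mean bookkeeping.

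For the second statement your plan is also incomplete, though less seriously. Convergence of the mean to $\int_0^\tau K(x,x)(1-G(x-))^{-1}dF(x)$ does not give convergence in probability; the "companion variance bound" is the actual work, and "strong consistency of Kaplan--Meier integrals" is not citable for your integrand $g(x)\,n\widehat S_n(x-)/Y(x)$, which is random and $n$-dependent. Moreover your envelope $n\,\E\bigl(\widehat S_n(x-)^2Y(x)^{-1}\ind_{\{Y>0\}}\bigr)\le C\,S(x-)^2/(1-H(x-))$ is asserted, not proved (the available bounds, such as $\Prob(\sup_t \widehat S_n(t)/S(t)>\lambda)\le \lambda^{-1}$, do not directly integrate to a uniform second-moment bound; one needs the moment inequalities for $\widehat S_n/S$ and $n/Y$ combined via Cauchy--Schwarz). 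The paper avoids all of this: it shows, again via a compensator computation and Lenglart--Rebolledo together with Lemma~\ref{lemma:integralConvergence2}, that the random factor $\bigl(\widehat S_n(x-)/(Y(x)/n)\bigr)^2$ can be replaced \emph{inside the integral} by $(1-G(x-))^{-2}$ up to $o_p(n^{-1})$, which reduces the statement to the i.i.d.\ average $\frac{1}{n}\sum_{i=1}^n K(X_i,X_i)\Delta_i(1-G(X_i-))^{-2}$ and the Law of Large Numbers, using $\E(\Delta_1 f(X_1)) = \int f(x)(1-G(x-))\,dF(x)$.
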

The previous Lemma combined with the results obtained in the previous section allow us to deduce the following results for Kaplan-Meier U-statistics.
\begin{corollary}
Assume Condition \ref{assu:nondegenerate}, and additionally assume that we have $\int_0^{\tau} |K(x,x)|\sqrt{\frac{S(x-)}{1-G(x-)}}dF(x)<\infty$. Then, it holds that
\begin{align}
\sqrt{n}\theta_U(\widehat{F}_n)\overset{\mathcal{D}}{\to}N\left(0,4\sigma^2/F(\tau)^4\right).\nonumber
\end{align}
\end{corollary}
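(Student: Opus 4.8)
The plan is to read off the limit of $\theta_U(\widehat F_n)$ from the non-degenerate V-statistic result through the exact identity recorded just above the statement,
\begin{align*}
\theta_U(\widehat F_n)=\frac{\theta(\widehat F_n)-\sum_{i=1}^n K(X_i,X_i)W_i^2}{\sum_{i\neq j}W_iW_j},
\end{align*}
and then to assemble the pieces with Slutsky's theorem. I would work throughout under the normalisation $\theta(F;\tau)=0$ adopted at the start of this subsection; this is harmless because subtracting a constant from $K$ shifts $\theta_U(\widehat F_n)$ and its deterministic limit by the same amount and leaves $K'=A_1A_2K$ — and hence $\sigma^2$ through $A\phi$ — unchanged, since $A$ annihilates constants. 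Under this normalisation the target reduces to showing $\sqrt n\,\theta_U(\widehat F_n)\overset{\mathcal D}{\to}N(0,4\sigma^2/F(\tau)^4)$.

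I would then scale the identity by $\sqrt n$ and analyse numerator and denominator separately. For the numerator, Theorem~\ref{thm:convergenceNondege} gives $\sqrt n\,\theta(\widehat F_n)=\sqrt n(\theta(\widehat F_n)-\theta(F;\tau))\overset{\mathcal D}{\to}N(0,4\sigma^2)$ with $\sigma<\infty$, while the additional hypothesis $\int_0^\tau|K(x,x)|\sqrt{S(x-)/(1-G(x-))}\,dF(x)<\infty$ is precisely the condition under which the first assertion of Lemma~\ref{lemma:diagKnonDege} yields $\sqrt n\sum_{i=1}^n K(X_i,X_i)W_i^2=o_p(1)$. Combining these by Slutsky, the scaled numerator converges in distribution to $N(0,4\sigma^2)$. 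For the denominator, the strong law $\sum_{i\neq j}W_iW_j\overset{a.s.}{\to}F(\tau)^2$ of \citet{bose1999strong} gives convergence in probability to the positive constant $F(\tau)^2$.

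Finally I would invoke Slutsky's theorem once more for the ratio: dividing a sequence converging in distribution to $N(0,4\sigma^2)$ by one converging in probability to $F(\tau)^2>0$ produces the limit $F(\tau)^{-2}N(0,4\sigma^2)=N(0,4\sigma^2/F(\tau)^4)$, which is the claim. I do not expect any genuine analytic obstacle: the two substantive facts — the asymptotic normality of the V-statistic and the negligibility of the diagonal correction $\sum_i K(X_i,X_i)W_i^2$ — are exactly Theorem~\ref{thm:convergenceNondege} and Lemma~\ref{lemma:diagKnonDege}, both granted by the hypotheses. The only points requiring attention are bookkeeping ones: checking that the corollary's extra integrability assumption matches the hypothesis of the first part of Lemma~\ref{lemma:diagKnonDege}, and confirming that the normalisation $\theta(F;\tau)=0$ leaves $\sigma^2$ and Condition~\ref{assu:nondegenerate} undisturbed.
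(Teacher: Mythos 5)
Your proposal is correct and follows essentially the same route the paper intends: the paper derives this corollary exactly by combining the identity $\theta_U(\widehat F_n)=\bigl(\theta(\widehat F_n)-\sum_{i=1}^n K(X_i,X_i)W_i^2\bigr)/\sum_{i\neq j}W_iW_j$ with Theorem~\ref{thm:convergenceNondege} for the numerator, the first part of Lemma~\ref{lemma:diagKnonDege} for the diagonal term, the almost sure convergence $\sum_{i\neq j}W_iW_j\to F(\tau)^2$ of Bose and Sen for the denominator, and Slutsky's theorem. Your additional check that the normalisation $\theta(F;\tau)=0$ is harmless (since $A$ annihilates constants, leaving $\sigma^2$ unchanged) is a sensible bookkeeping point the paper leaves implicit in its ``without loss of generality'' assumption.
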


\begin{corollary}
Suppose one of the following \it{degeneracy conditions} hold.  
\begin{enumerate}
\item[i)] $\phi(x)=0$, $F$-a.s.,
\item[ii)] $\phi(x)=c$, $F$-a.s. for some non-zero constant $c$, and exists $N>0$ such that $\widehat{S}_n(\tau) = S(\tau)$ a.s. for all  $n \geq N$.
\end{enumerate}
Assume Condition \ref{assu:degenerate} and that $\int_{0}^{\tau} \frac{|K(x,x)|}{1-G(x-)}dF(x)<\infty$. Then, it holds that
\begin{align*}
n\theta_U(\widehat{F}_n)\overset{\mathcal{D}}{\to}\frac{1}{F(\tau)^2}\left(\int_0^\tau\left(\frac{S(x)K'(x,x)}{1-H(x-)}-\frac{K(x,x)}{1-G(x-)}\right)dF(x)+\Psi\right),
\end{align*}
where $\Psi$ is as in Theorem~\ref{thm:decompositionDege}.
\end{corollary}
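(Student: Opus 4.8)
The plan is to decompose $n\theta_U(\widehat F_n)$ into numerator and denominator using the identity already recorded,
$$\theta_U(\widehat F_n) = \frac{\Inte - \sum_{i=1}^n K(X_i,X_i)W_i^2}{\sum_{i\neq j}W_iW_j},$$
and to treat the three pieces separately. Since we assume without loss of generality that $\theta(F;\tau)=0$, the $n$-scaled numerator equals $n(\Inte-\theta(F;\tau)) - n\sum_{i=1}^n K(X_i,X_i)W_i^2$, so the task reduces to identifying the joint limit of these two terms together with the limit of the denominator, and then assembling everything through Slutsky's theorem.

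First I would handle the leading term. Because one of the two degeneracy conditions is assumed, we are precisely in the setting of Corollary~\ref{Corollary:Vstatsdege}, which under Condition~\ref{assu:degenerate} gives
$$n(\Inte - \theta(F;\tau)) \overset{\mathcal D}{\to} \int_0^\tau \frac{S(x)}{1-H(x-)}K'(x,x)\,dF(x) + \Psi,$$
with $\Psi$ as in Theorem~\ref{thm:decompositionDege}. Second, I would control the diagonal correction. The extra hypothesis $\int_0^\tau \frac{|K(x,x)|}{1-G(x-)}\,dF(x)<\infty$ is exactly the integrability required by the second part of Lemma~\ref{lemma:diagKnonDege}, which yields $n\sum_{i=1}^n K(X_i,X_i)W_i^2 = \int_0^\tau \frac{K(x,x)}{1-G(x-)}\,dF(x)+o_p(1)$, i.e. convergence in probability to a deterministic constant. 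Third, the denominator is controlled by the strong law of \citet{bose1999strong}, namely $\sum_{i\neq j}W_iW_j \overset{a.s.}{\to} F(\tau)^2>0$ (we use here that $F(\tau)>0$, so division is legitimate).

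The final step assembles these ingredients. Adding the in-probability-convergent diagonal term to the in-distribution-convergent leading term gives, for the $n$-scaled numerator,
$$\int_0^\tau\left(\frac{S(x)K'(x,x)}{1-H(x-)}-\frac{K(x,x)}{1-G(x-)}\right)dF(x) + \Psi,$$
and dividing by the denominator, which converges almost surely, hence in probability, to the positive constant $F(\tau)^2$, produces the stated limit after one more application of Slutsky's theorem. I do not anticipate any genuine obstacle: every analytic ingredient is already established, and the only points demanding care are checking that the hypotheses line up (Condition~\ref{assu:degenerate} together with the diagonal integrability condition feed exactly the results invoked) and that the mode-of-convergence bookkeeping is done correctly, since we are combining one term that converges in distribution with two terms that converge to constants.
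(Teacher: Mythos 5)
Your proposal is correct and matches the paper's own (implicit) argument exactly: the paper presents this corollary as an immediate consequence of the decomposition $\theta_U(\widehat F_n)=(\Inte-\sum_i K(X_i,X_i)W_i^2)/\sum_{i\neq j}W_iW_j$, combining Corollary~\ref{Corollary:Vstatsdege} for the leading term, the second part of Lemma~\ref{lemma:diagKnonDege} for the diagonal term, the a.s.\ convergence $\sum_{i\neq j}W_iW_j\to F(\tau)^2$ of \citet{bose1999strong}, and Slutsky's theorem. Your mode-of-convergence bookkeeping (one distributional limit combined with two limits to constants, with $F(\tau)^2>0$ justifying the division) is precisely what is needed, so there is nothing to add.
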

\subsection{Analysis of Conditions~\ref{assu:nondegenerate} and \ref{assu:degenerate}, and comparison with related works}\label{sec:comparisonconditions}
In this section we discuss  our conditions, and  we compare them with the work of \citet{bose2002asymptotic} and \citet{datta2010inverse}. 

We begin by analysing Condition \ref{assu:nondegenerate}, used in the non-degenerate regime, which implies Theorem \ref{thm:convergenceNondege}. Recall that \citet{Efron1990} showed that the variance of the limit distribution in Theorem~\ref{thm:convergenceNondege} is finite if 
\begin{align}
\int_0^\tau \frac{\left(\phi(x)\right)^2}{1-G(x-)}dF(x)=\int_0^\tau \frac{1}{1-G(x-)}\left(\int_0^\tau K(x,y)dF(y)\right)^2dF(x)<\infty, \label{eqn:randomv8fb8h2}
\end{align} 
whereas our Condition~\ref{assu:nondegenerate}.i requires 
\begin{align*}
\int_0^\tau\int_0^{\tau} \frac{K(x,y)^2}{1-G(x-)}dF(y)dF(x)<\infty,
\end{align*}
which is very close to term in Equation~\eqref{eqn:randomv8fb8h2}. Indeed, there is just one  \emph{Cauchy-Schwarz} inequality gap from the condition of \citet{Efron1990}, suggesting little room for improvement. On the other hand, Condition~\ref{assu:nondegenerate}.ii is a standard condition to deal with the diagonal term that appears in the V-statistic representation. It is only used in Lemma~\ref{lemma:sqrtnQD} and it is usually much simpler to verify due to the multiplicative factor $\sqrt{S(x-)}$ that appears in the integral, which makes the tail much lighter.

We compare our Condition~\ref{assu:nondegenerate} with  the conditions of
Theorem 1 of \citet{bose2002asymptotic}, which establishes the same limit result as our Theorem~\ref{thm:convergenceNondege} under different conditions. Theorem 1 of \cite{bose2002asymptotic} requires our Condition \ref{assu:degenerate}.i (which implies our Condition \ref{assu:nondegenerate}.i), together with three extra conditions involving the function
\begin{align}\label{eqn:defiCfunction}
C(x) = \int_0^{x-} \frac{dG(s)}{S(s)((1-G(s))^2}.
\end{align}
For example, one  of the extra conditions required is  
\begin{align}\label{eqn:condBoseSen}
\int_0^{\tau}\int_0^{\tau} \frac{|K(x,y)|C(x)C(y)}{(1-G(x-))(1-G(y-))}dF(x)dF(y)< \infty,
\end{align}
which, compared to our Condition~\ref{assu:nondegenerate}.i, is much harder to satisfy as the function $C(x)$ grows  much faster than $(1-G(x-))^{-1}$ when $x$ approaches infinity. Indeed, by assuming that $G$ and $F$ are continuous distributions,  it is not hard to verify that $(\frac{\partial}{\partial x} C(x))/(\frac{\partial}{\partial x}(1-G(x))^{-1})=1/S(x)$. Therefore, unless the kernel $K(x,y)$ decays very fast, Equation~\eqref{eqn:condBoseSen} is very hard to satisfy. In example~\ref{example:exampleconditions} below, we show that $C(x)$ can grow exponentially faster than $(1-G(x))^{-1}$.

We continue by comparing our Condition~\ref{assu:nondegenerate} with the ones of
Theorem 1 of \citet{datta2010inverse}.  Theorem 1 of \citep{datta2010inverse} requires  $\int_0^\tau \frac{\phi(x)^2}{1-G(x-)}dF(x)<\infty$, which is the condition of \citet{Efron1990} for finiteness of the  variance. However, it also requires  
\begin{align*}
\int_{0}^{\tau} \left(\frac{(\phi-A\phi)(x)}{1-G(x-)} \right)^2\frac{dG(x)}{1-G(x-)} = \int_0^{\tau} \frac{S(x)}{1-G(x)}(\phi-A\phi)(x)^2 dC(x)<\infty,
\end{align*}
which is very hard to satisfy as it involves the function $C(x)$ defined in Equation~\eqref{eqn:defiCfunction}.

\begin{example}\label{example:exampleconditions}
Let $K(x,y) = xy$,  $S(x) = e^{-x}$ and $1-G(x) = S(x)^a$ with $a > 0$. Note that $\phi(x) = x$, $(A\phi)(x) = -1$ and $K'(x,y )=(A_1A_2K)(x,y) = 1$. From Theorem~\ref{thm:convergenceNondege}, we have that $\sqrt{n}(\theta(\widehat F_n)-\theta(F)) \overset{\mathcal D} {\to}N(0,4\sigma^2)$, where $\sigma=\int_0^{\infty} e^{-x+ax}dx$ (see Equation~\eqref{eqn:varianceNondege}) is finite if and only if $a<1$. 

In this setting, our Conditions \ref{assu:nondegenerate}.i and \ref{assu:nondegenerate}.ii are 
\begin{align*}
\int_0^{\infty} \int_0^{\infty} x^2y^2 e^{-x+ax}e^{-y}dxdy<\infty,\quad\text{and}\quad \int_0^{\infty} e^{-(3/2-a/2)x}<\infty,
\end{align*}
respectively, which are satisfied for $a<1$. Hence, our conditions are the best possible in this case, as the variance $\sigma^2$ of the limit distribution is finite if and only if $a<1$.

Bose and Sen's approach \cite{bose2002asymptotic} requires the finiteness of the expression in Equation~\eqref{eqn:condBoseSen}. In this example, $C(x)=a\frac{e^{x+ax}-1}{1+a}$, then Equation~\eqref{eqn:condBoseSen} is equal to
\begin{align*}
\int_0^{\tau}\int_0^{\tau} \frac{|K(x,y)|C(x)C(y)}{(1-G(x-))(1-G(y-))}dF(x)dF(y)=\left(\int_0^{\infty}x\frac{a(e^{x+ax}-1)}{(1+a)e^{-ax}}e^{-x}dx\right)^2,
\end{align*}
which is infinite for all $a>0$. We deduce that Theorem 1 of \cite{bose2002asymptotic} cannot be applied in this setting.   

IPCW's  approach \cite{datta2010inverse} requires  
\begin{align*}
\int_0^{\infty} x^2 e^{ax}e^{-x}dx<\infty,\quad\text{and}\quad\int_0^{\infty} a(x+1)^2e^{2ax}dx<\infty.
\end{align*}

While the first equation is satisfied for $a<1$, the second equation cannot be satisfied for any $a>0$. Hence, Theorem 1 of \cite{datta2010inverse}  does not hold in this setting.

In the previous example note that $C(x)=a\frac{e^{x+ax}-1}{1+a}$ grows exponentially fast with $x$. Therefore, to use the respective theorems of \cite{bose2002asymptotic} and \cite{datta2010inverse} we will need to use kernels that decay exponentially fast.

\end{example}

We continue by analysing Condition~\ref{assu:degenerate} which is used in the degenerate case.  Observe that the integral of Condition \ref{assu:degenerate}.ii is equal to the first moment of the limit distribution of Theorem~\ref{thm:decompositionDege}, thus this condition cannot be avoided.  The variance of the limit distribution in Theorem~\ref{thm:decompositionDege}, is given by 
\begin{align}\label{eqn:randomvhjrh421d}
\int_0^{\tau}\frac{K'(x,y)^2S(x)S(y)}{(1-H(x-))(1-H(y-))}dF(x)dF(y),
\end{align}
while our Condition \ref{assu:degenerate}.i requires 
\begin{align*}
\int_0^{\tau}\frac{K(x,y)^2}{(1-G(x-))(1-G(y-))}dF(x)dF(y)<\infty,
\end{align*}
which ensures the finiteness of \eqref{eqn:randomvhjrh421d}. 
Recall that $K'$, defined in Equation~\eqref{eqn:defK'}, is given by
\begin{align*}
K(x,y)-\int_{x}^{\tau}K(s,y)\frac{dF(s)}{S(x)}-\int_{y}^{\tau}K(x,t)\frac{dF(t)}{S(y)}+\int_{x}^{\tau}\int_{y}^{\tau}K(s,t)
\frac{dF(s)}{S(x)}\frac{dF(t)}{S(y)}.
\end{align*}

From here, if we consider continuous distributions, we observe that the expression in our condition is similar to the variance given in \eqref{eqn:randomvhjrh421d}. If we consider an appropriate kernel $K$, it may happen that some terms in $K'$ cancel each other, resulting in a kernel $K'$ of much smaller  order than $K$. An example of this is the kernel $K(x,y) = (x-c)(y-c)$, where $c = \int x dF(x)$ and $S(x) = e^{-x}$. Note this kernel is similar to the previous example, but we subtract $c$ to make it degenerate. In this setting we have $K'(x,y) = 1$, hence it is easier to have finite variance than to satisfy our condition. However, in general cases we do not expect to have cancellation between the terms in $K'$ and thus $K$ and $K'$ should be of  similar order, making our Condition \ref{assu:degenerate}.i sufficient and necessary.

Up to the best of our knowledge, the work of \citet{bose2002asymptotic} is the only one that establishes results for the degenerate case in a general setting. Compared to their result, our conditions are better since their Theorem 2 has the same requirements as their Theorem 1, i.e. conditions involving the function $C(x)$, including Equation~\eqref{eqn:condBoseSen} which, as we saw in our previous example, is very hard to satisfy. Indeed, if we repeat Example~\ref{example:exampleconditions} with the kernel $K(x,y) = (x-c)(y-c)$, the conditions of Theorem~\ref{thm:decompositionDege} are satisfied for $a<1$ (in which case the asymptotic variance is well-defined), while the conditions of Bose and Sen are not satisfied.

\section{Applications}
We give two examples of applications that motivated us to study Kaplan-Meier V-statistics. First we analyse a slight variation of the Cramer-Von Mises statistic that allows us to treat it as a Kaplan-Meier V-statistic. In our second application, we measure goodness-of-fit via the Maximum Mean Discrepancy (MMD), a popular distance between probability measures frequently used in the Machine Learning community.

\begin{example}[Cram\'er-von Mises test-statistic]
Consider the problem of testing the null hypothesis $H_0:F=F_0$ against the general alternative $H_1:F\neq F_0$. The Cram\'er-von Mises statistic measures the closeness between $F$ and $F_0$ by computing
\begin{align}\label{eqn:random3913}
\int_{\R_+}(F(x)-F_0(x))^2dF_0(x).
\end{align}
When $F$ is a probability distribution function, it can be verified that Equation \eqref{eqn:random3913} equals to
\begin{align}
\theta(F)=\int_{\R_+}\int_{\R_+} K(x_1,x_2)dF(x_2)dF(x_1),\label{eqn:randomCVM123sf}
\end{align}
where  
\begin{align}
K(x,y)&=\int_{\R_+}(\ind\{x\leq t\}-F_0(t))(\ind\{y\leq t\}-F_0(t)) dF_0(t).\nonumber
\end{align}
Under the null hypothesis $H_0: F=F_0$, we estimate $\theta(F)=0$ by using  Equation \eqref{eqn:randomCVM123sf}, replacing $F$ by the Kaplan-Meier estimator $\widehat{F}_n$. Then, our test-statistic is
\begin{align}
\theta(\widehat{F}_n)=\sum_{i=1}^n\sum_{j=1}^nW_iW_jK(X_{[i:n]},X_{[j:n]}).\label{eqn:CVM test-stat}
\end{align}
Notice that the equality between Equations~\eqref{eqn:random3913} and \eqref{eqn:randomCVM123sf} is only valid when $F$ is a probability distribution, unfortunately, the Kaplan-Meier estimator $\widehat{F}_n$ is not always a probability distribution, indeed, $\widehat{F}_n$ is a probability distribution if and only if the largest observation is uncensored, thus $\theta(\widehat{F}_n)$ is slightly different from the Cram\'er-von Mises test-statistic  $\int_{0}^{\infty} (\widehat {F}_n(x)-F_0(x))^2 dF_0(x)$.

Under the null hypothesis, we observe two different asymptotic behaviours of our test-statistic $\theta(\widehat{F}_n)$, one for $F_0(\tau)<1$ and the other for $F_0(\tau)=1$. To see this, for $x\in I_H$, consider the projection $\phi$ defined in Equation~\eqref{eqn:defiphi}, which in this case is given by
\begin{align}
\phi(x)&=\int_{0}^\tau K(x,y)dF_0(y)\nonumber\\
&=\int_{0}^{\infty}\left(F_0(t\wedge \tau)-F_0(t)F_0(\tau)\right)(\ind\{x\leq t\}-F_0(t))dF_0(t),\nonumber
\end{align}
and notice that if $F_0(\tau)<1$, then $\phi(x)$ does not satisfy the {degeneracy condition} of Corollary \ref{Corollary:Vstatsdege}. Thus, by Theorem \ref{thm:convergenceNondege}, it holds that $\sqrt{n}(\theta(\widehat{F}_n)-\theta(F_0,\tau))$ is asymptotically normally distributed. On the other hand, if $F_0(\tau)=1$, then $\phi(x)$ satisfies the {degeneracy condition} of Corollary \ref{Corollary:Vstatsdege}, indeed, we have that $\phi(x)=0$ for all $x \in I_H$. Hence, under Condition \ref{assu:degenerate}, Corollary \ref{Corollary:Vstatsdege} applies, concluding that $n\theta(\widehat{F}_n)$ is asymptotically distributed as the weighted sum of i.i.d. $\chi_1^2$ random variables plus some constant term. 

For comparison purposes, we consider the alternative formulation of the Cramer-von Mises statistic by \citet{koziol1976cramer}. They consider the random integral $\Phi_n=\int_0^{\infty}(\bar{F}_n(t)-F_0(t))^2dF_0(t)$, where $\bar{F}_n$ is exactly as the Kaplan-Meier estimator, but they force $\bar {F}_n(\tau_n) = 1$ even if the largest observation is censored. For simplicity of the analysis, \citet{koziol1976cramer} assumed that the censoring distribution satisfies  $1-G(t)=S_0(t)^\gamma$ for $\gamma<2$ and  that $F_0$ is a continuous distribution. Then, based on Gaussian processes arguments, they proved that $n\Phi_n\overset{\mathcal{D}}{\to}\Phi$ where $\Phi$ denotes (a potentially infinite) linear combination of $\chi_1^2-1$ independent random variables, and that
\begin{align}
\E(\Phi)=\frac{1}{3(2-\gamma)}, \quad \text{and} \quad\Var(\Phi)=\frac{2}{9(5-\gamma)(2-\gamma)}.\nonumber
\end{align} 
Using our techniques,  we consider $\theta(\widehat{F}_n)$ as in Equation \eqref{eqn:CVM test-stat}. In this case, we get 
$K(x,y)=S_0(\max\{x,y\})+\frac{F_0(x)^2+F_0(y)^2}{2}-\frac{2}{3}$ and $K'(x,y)=\frac{S_0(\max\{x,y\})^3}{3S_0(x)S_0(y)}$ (recall the definition of $K'$ in Equation~\eqref{eqn:defK'}). By choosing $1-G(t)=S_0(t)^\gamma$, it holds $F_0(\tau)=1$ which satisfies the {degeneracy condition}, as $\phi(x)=\int_0^{\tau}K(x,s)dF_0(s) = 0$. Then, if $\gamma<1$, the conditions of Corollary~\ref{Corollary:Vstatsdege} are satisfied and thus
\begin{align*}
n\theta(\widehat{F}_n)\overset{\mathcal{D}}{\to}\int_{0}^{\infty} \frac{S_0(x)}{3S_0(x)^{\gamma}}dF_0(x)+\Psi,
\end{align*}
where $\Psi$ is as in Theorem~\ref{thm:decompositionDege}. 
Recall that $\E(\Psi)=0$, then the asymptotic mean is given by $\frac{1}{3(2-\gamma)}$ and the asymptotic variance is given by
\begin{align}
2\sum_{k=1}^\infty \lambda_k^2=2\int_0^\tau\int_0^\tau \frac{K'(x,y)^2}{S_0(x)^\gamma S_0(y)^\gamma}dF_0(y)dF_0(x)=\frac{2}{9(5-\gamma)(2-\gamma)}.\nonumber
\end{align}
Our result suggests that our estimator and the one considered by \citet{koziol1976cramer} have similar behaviours, even when rescaled by $n$. In Figure \ref{Fig1}, we show simulations of the empirical distribution of $n\theta(\widehat{F}_n)$ for different sample sizes $n$, and $\gamma\in\{0.5,1,1.5\}$. For $\gamma=0.5$ we can observe a clear convergence of the distribution functions as predicted by our results. The plot for $\gamma=1.5$ shows a shift of the distribution functions as the sample size increases, suggesting divergence. The simulations for $\gamma = 1$ are, unfortunately, not very revealing. 
\begin{figure}
        \includegraphics[width=\textwidth]{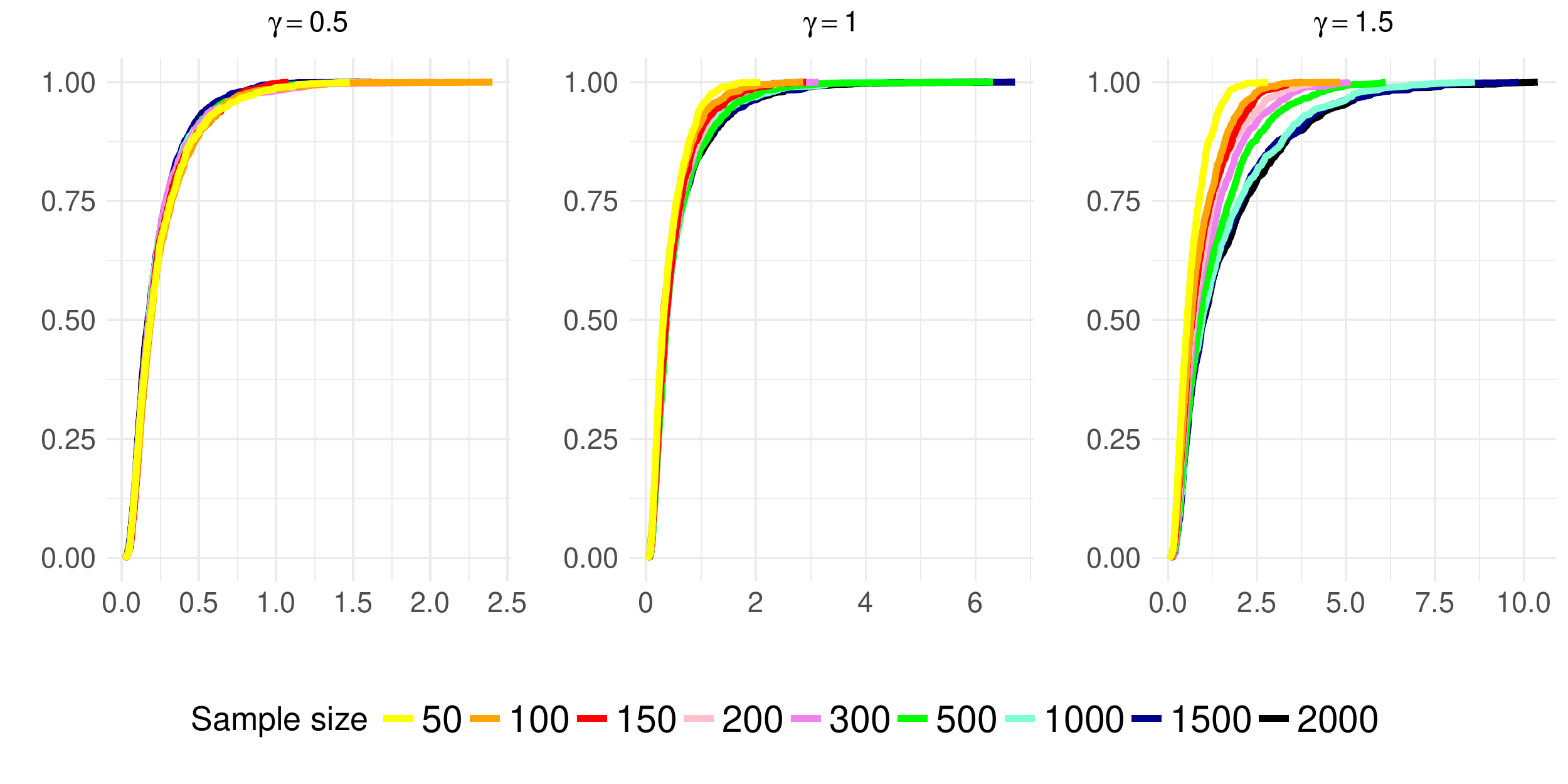}\caption{Empirical distribution functions of $n\theta(\widehat{F}_n)$ for different sample sizes and values of $\gamma$. Each empirical distribution was computed using 1000 independent realisations.}\label{Fig1}
\end{figure}
\end{example}

\begin{example}[Maximum mean discrepancy]Let $(\mathcal{H},\langle\cdot,\cdot\rangle_{\mathcal{H}})$ be a reproducing kernel Hilbert space of real-valued functions with reproducing kernel denoted by $K$. Denote by $\mathcal{P}$ the set of all probability distribution functions on $\R_+$, we define the map $\mu_\cdot:\mathcal{P}\to \mathcal H$ by $\mu_F(\cdot) = \E_{X\sim F}(K(\cdot,X))$ for any distribution function $F\in\mathcal{P}$. A reproducing kernel is called characteristic if the map $\mu$ is injective \citep{sriperumbudur2010hilbert}. It is worth mentioning that most of the standard positive-definite kernels (e.g. Gaussian and Ornstein-Uhlenbeck) are characteristic. In such a case, the map $\mu$ allows us to establish a proper distance between probability measures in terms of the norm of the space $\mathcal{H}$. That is, given two probability distributions $F$ and $F_0$, we define their distance by
\begin{align}
\|\mu_{F}-\mu_{F_0}\|_{\mathcal{H}} =\sqrt{\int_{\R_+}\int_{\R_+}K(x,y)(dF(x)-dF_0(x))(dF(y)-dF_0(y))}.\label{eqn:mmddistanceran1231}
\end{align}
Also, under the conditions stated above, such distance coincides with the Maximum mean discrepancy with respect to the unit ball of $\mathcal H$, which is defined as follows
\begin{align}
MMD_{\mathcal H}(F,F_0)=\sup_{f\in\mathcal{H},\|f\|_{\mathcal H}\leq 1} \E_F(f(X))-\E_{F_0}(f(X)).\label{eqn:MMD}
\end{align}

In the uncensored setting, the Maximum mean discrepancy has been used in a variety of testing problems. Indeed, in the simplest case, we can assess if our data points are generated from a distribution $F_0$ by comparing it with the empirical distribution $\tilde F_n$. By using the equivalency between Equation \eqref{eqn:mmddistanceran1231} and \eqref{eqn:MMD}, we deduce that  $MMD(\tilde{F}_n,F_0)^2$ is a  V-statistic. This fact allows us to easily derive the relevant asymptotic results to construct a statistical test. 

In the setting of right-censored data we study $MMD(\widehat{F}_n,F_0)^2$ using the Kaplan-Meier estimator $\widehat{F}_n$. By using Equations \eqref{eqn:mmddistanceran1231} and \eqref{eqn:MMD}, our test-statistic can be written as
\begin{align}
MMD(\widehat{F}_n,F_0)^2=\int_0^\tau\int_0^\tau K(x,y)(d\widehat{F}_n(x)-dF_0(x))(d\widehat{F}_n(y)-dF_0(y)).\nonumber
\end{align}
Notice that $MMD(\widehat{F}_n,F_0)^2$  coincides with $\beta$ defined in Equation~\eqref{eqn:defiBeta}. Hence, under the null hypothesis $H_0:F=F_0$, and Condition \ref{assu:degenerate}, Theorem \ref{thm:decompositionDege} states
\begin{align}
nMMD(\widehat{F}_n,F_0)^2=n\beta(\widehat{F}_n,F_0)\overset{\mathcal{D}}{\to}\int_0^\tau\frac{K'(x,x)}{1-G(x)}dF_0(x)+\Psi,\nonumber
\end{align} 
where $\Psi$ is as in Theorem~\ref{thm:decompositionDege}. Notice that Theorem \ref{thm:decompositionDege} does not require the {degeneracy condition} of Corollary~\ref{Corollary:Vstatsdege}.

\begin{figure}
        \includegraphics[width=\textwidth]{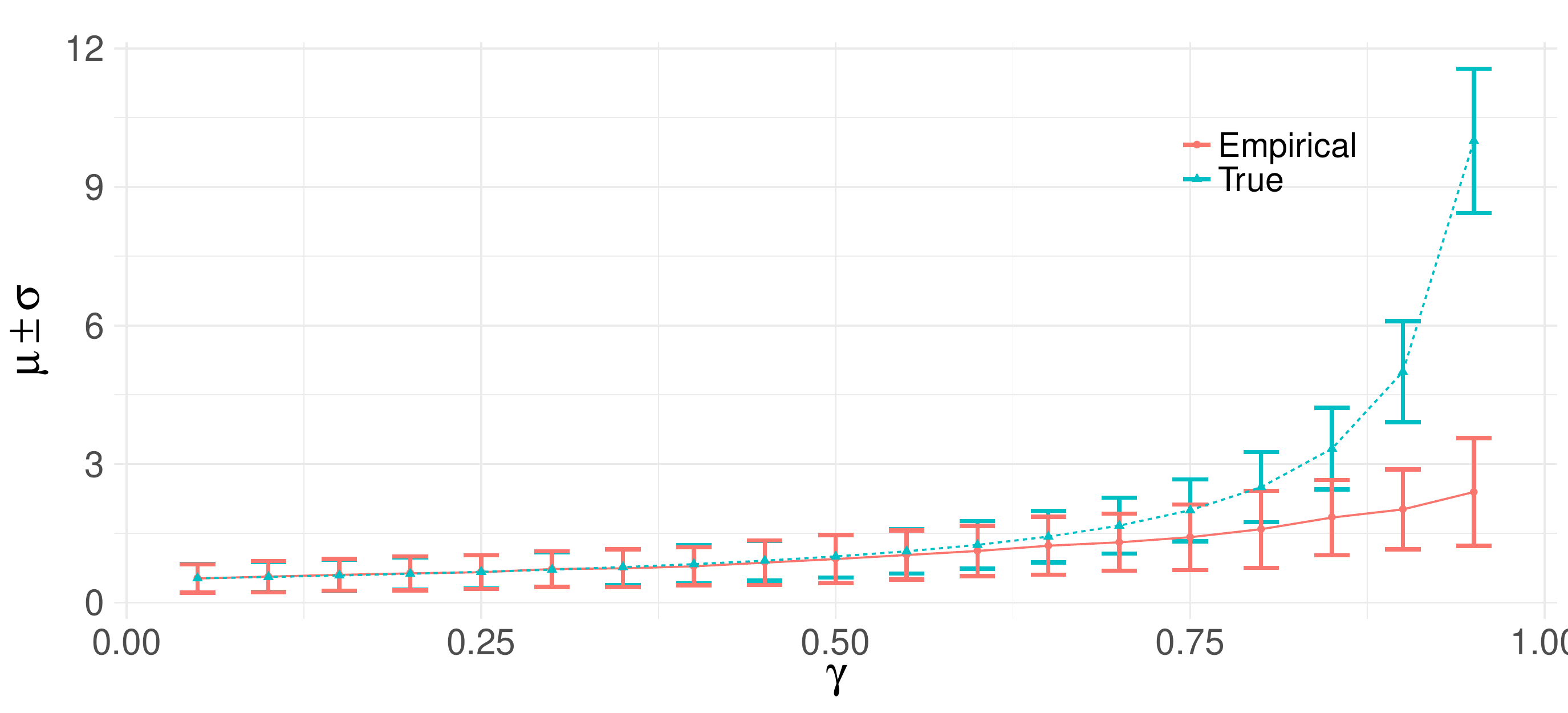}\caption{Empirical vs asymptotic mean of $nMMD(\widehat{F}_n,F_0)$ for different values of $\gamma<1$. For each value of the mean $\mu$ and $\widehat{\mu}$, we plot the intervals $[\mu-\sigma,\mu+\sigma]$ and $[\widehat\mu-\widehat\sigma,\widehat\mu+\widehat\sigma]$, where $\widehat\sigma^2$ and $\sigma^2$ denote the empirical and asymptotic variance respectively. We use a fixed sample size $n=3000$.}\label{Fig2}
\end{figure}

For the sake of simplicity, let us consider $K$ as the Ornstein-Uhlenbeck kernel given by $K(x,y) = e^{-|x-y|}$, and let $S_0(x) =e^{-x}$ and $1-G(x) = S_0(x)^{\gamma}$ (notice that for this choice of parameters $\tau=\infty$). A tedious computation shows that
\begin{align}
K'(x,y) = \begin{cases}
\frac{1}{2} (1-(x-y))e^{-(x-y)} & x>y\\
 \frac{1}{2} (1-(y-x))e^{-(y-x)} & x\leq y \\
 \end{cases}.\nonumber
\end{align}
Then, under the null hypothesis and Condition~\ref{assu:degenerate}, which  is satisfied for $\gamma<1$, it holds
\begin{align}
nMMD(\widehat{F}_n,F_0)^2\overset{\mathcal{D}}{\to}\frac{1}{2(1-\gamma)}+\Psi.\nonumber
\end{align}
Since $\E(\Psi)=0$, the asymptotic mean is given by $\frac{1}{2(1-\gamma)}$ and the asymptotic variance corresponds to 
\begin{align}
2\sum_{k=1}^\infty \lambda_k^2=2\int_{\R_+}\int_{\R_+} K'(x,y)^2e^{-(1-\gamma)(x+y)}dxdy=\frac{5-4\gamma+\gamma^2}{2(\gamma-3)^3(\gamma-1)}.\nonumber
\end{align}
In Figure \ref{Fig2}, we compare the empirical mean and variance with the mean and variance of the limit distribution. We repeat this experiment 1000 times for different values of $\gamma$ and a fixed sample size of 3000 data points. We observe that as $\gamma$ approaches 1 the empirical estimation starts to get far away from the mean and variance predicted by our result, suggesting a slow convergence rate.
\end{example}

\section{Conclusions and Final Remarks}
In this work we studied the limit distribution of Kaplan-Meier V- and U-statistics under two different regimes: the degenerate and the non-degenerate. Our results hold under very simple conditions and, in practice, we just need to check the finiteness of two simple integrals. Compared to previous approaches our results are much simpler to state and the conditions required to apply them are much easy to satisfy and verify. Additionally, our result gives more information about the limit distribution, e.g. we give closed-form  expressions for the asymptotic mean and variance, as well as an asymptotic canonical V-statistic representation of the Kaplan-Meier V-statistic.

We give a few comments about our results. First, in the canonical case (uncensored data), U-statistics are preferred over V-statistics due to several reasons. Arguably, the most important reason is that U-statistics are unbiased while V-statistics are, in general, biased. The bias of V-statistics implies that limit theorems need to deal with the behaviour of the biased part of the estimator, resulting in stronger conditions in the statement of the results. In the right-censored setting,  it does not seem to be a major difference between U- and V-statistics, and indeed, V-statistics are easier to work with as they can be represented by an integral with respect to the Kaplan-Meier estimator. Furthermore, due to the complex structure of the Kaplan-Meier weights, the Kaplan-Meier U-statistics are usually biased as opposed to its canonical counterpart, losing their main advantage over V-statistics.

Second, we think that our proof can be implemented in the settings of random kernels $K_n$ that depend on the data points $(X_i,\Delta_i)_{i=1}^n$ as long certain regularity conditions hold, namely, i) $K_n'$ is predictable in the sense of Definition~\ref{def:predicDsigmaAlgebra},  ii) it exists a deterministic kernel $\bar K$ such that $\sup_{x,y\leq \tau_n}|K_n(x,y)/{\bar K(x,y)}|= O_p(1)$, iii) $K_n$ converges in probability to some deterministic kernel $K$, iv) $\bar K$ and $K$ satisfy Conditions~\ref{assu:nondegenerate} or~\ref{assu:degenerate}, depending on the case of interest.

Third, our analysis can be extended to kernels of dimension greater than two by using the same underlying ideas exposed in this work. Nevertheless, the statements and proofs of the results become much more complicated due to long computations that come from the fact that the core of our proof strategy relies on decomposing the integration region into Interior and Exterior regions, thus, as the number of integrals grows so do the possible combinations of these type of regions. We do not include these type of results as they do not add much value to the current work, especially because U- and V-statistics of dimension two are the most common in  applications.

Finally, after the publication of the first preprint of this paper a few works have followed the path of using MMD distances to hypothesis testing in the setting of right-censored data. Particularly,  \citet{matabuena2019energy} implemented  our  MMD example for the two-sample problem, and extended it to Energy Distances, which are a generalisation of the MMD. Their analysis is a direct application of Theorems~\ref{thm:convergenceNondege} and~\ref{thm:decompositionDege}, and Corollary~\ref{Corollary:Vstatsdege}. In a similar direction, in \cite{fernandez2019reproducing} the authors studied  MMD distances in the context of hazard functions, obtaining as test-statistic a  double integral with respect to the Nelson-Aalen estimator. Due to the relationship between the Nelson-Aalen estimator and the counting process martingale $M(t)$, the asymptotic analysis of their test-statistic is carried out by using the techniques of this paper. Somewhat related is the work of \citet{rindt2019nonparametric}, where an MMD independent test for right-censored data is presented. Their test-statistic is a Kaplan-Meier quadruple integral, however, under their null hypothesis, such a test-statistic becomes the product two Kaplan-Meier double integrals, thus its asymptotic analysis follows from our results.

\section{Proofs I: Road Map}\label{sec:Road Map}
In order to keep our proofs as tidy as possible and to emphasise the key steps without the distraction of messy computations, we give a list of intermediate steps that are needed to carry out the proof of our main results. 

Recall that Equation~\eqref{eqn:expansionI-T} states $\theta(\widehat{F}_n)-\theta(F;\tau)= 2\alpha+\beta$, where $\alpha=\int_{0}^\tau\phi(x)d(\widehat{F}_n-F)(x)$ and $\phi(x)=\int_0^\tau K(x,y)dF(y)$. We analyse $\alpha$ and $\beta$ individually. 

\subsection{Treatment of {$\alpha$}{}}
We distinguish between two cases, when $\phi(x)$, defines in Equation~\eqref{eqn:defiphi}, satisfies the degeneracy condition stated in the Corollary \ref{Corollary:Vstatsdege} and when it does not. For the first case, observe that $\alpha=0$ holds trivially. For the second case, an application of the Central Limit Theorem (CLT) of  \citet{akritas2000} gives us the asymptotic behaviour of $\alpha$.

\begin{theorem}[\citet{akritas2000}]\label{Thm: Akritas}
Let $\phi:(0,\infty)\to \R$ be such that $\int_{0}^{\tau} \frac{\phi(x)^2}{1-G(x-)}dF(x) < \infty$, then 
\begin{align}
\sqrt{n} \int_{0}^{\tau} \phi(x)d(\widehat{F}_n- F)(x) \overset{\mathcal D}{\to} N(0,\sigma^2),\nonumber
\end{align}
where $\sigma^2 = \int_{0}^{\tau}\frac{S(s)}{1-H(s-)}(A\phi)^2(s)dF(s)<\infty$.\end{theorem}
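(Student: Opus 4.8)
The plan is to represent the linear Kaplan--Meier integral as a single stochastic integral against the pooled counting-process martingale $M$ and then invoke Rebolledo's martingale central limit theorem. The starting point is Gill's exact identity (the Duhamel equation for the product-limit estimator), which in the present notation reads
\begin{align*}
\frac{\widehat{S}_n(t)}{S(t)} - 1 = -\int_0^t \frac{\widehat{S}_n(s-)}{S(s)}\,\frac{dM(s)}{Y(s)} \quad \text{on } \{Y>0\},
\end{align*}
since $dN(s)-Y(s)\,d\Lambda(s)=dM(s)$. Feeding this into $\int_0^{\tau}\phi\,d(\widehat F_n-F)$ and rearranging the resulting double integral by Fubini, the inner integration collapses $\phi$ into exactly the Efron--Johnstone transform $A\phi$: up to remainder terms,
\begin{align*}
\int_0^{\tau}\phi\,d(\widehat F_n-F)(x) = -\int_0^{\tau_n}\widehat{S}_n(x-)(A\phi)(x)\,\frac{dM(x)}{Y(x)} + R_n.
\end{align*}
The operator $A$ is not imposed by hand; it is forced out by this rearrangement, which is precisely why the limiting variance is phrased through $A\phi$. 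First I would establish this identity carefully, tracking the upper limit of integration ($\tau_n$ versus $\tau$) so that the difference between $\widehat A$ and $A$ from Equation~\eqref{eqn:diffAes} is isolated into $R_n$.

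Next I would apply Rebolledo's theorem to the local square-integrable martingale $\sqrt{n}\,\widehat{M}_n(t) := -\sqrt{n}\int_0^{t\wedge\tau_n}\widehat{S}_n(x-)(A\phi)(x)\,dM(x)/Y(x)$, whose integrand is predictable since $\widehat S_n(x-)$ and $Y(x)$ are left-continuous and $A\phi$ is deterministic. Two conditions must be checked. For the predictable variation, using $d\langle M\rangle(s)=Y(s)\,d\Lambda(s)$ (continuous case; atoms require the standard $(1-\Delta\Lambda)$ corrections),
\begin{align*}
\big\langle \sqrt{n}\,\widehat{M}_n\big\rangle(\tau) = \int_0^{\tau_n}\widehat{S}_n(x-)^2(A\phi)^2(x)\,\frac{n}{Y(x)}\,d\Lambda(x),
\end{align*}
and invoking the uniform law of large numbers $Y(x)/n\to 1-H(x-)=S(x-)(1-G(x-))$, the consistency $\widehat{S}_n\to S$, and $dF=S(x-)\,d\Lambda$, this converges in probability to $\int_0^{\tau}\frac{S(x)^2}{1-H(x-)}(A\phi)^2\,d\Lambda=\sigma^2$. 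For the Lindeberg/jump condition, each summand $M_i$ is a compensated indicator with jumps bounded by $1$, so the jumps of $\sqrt{n}\,\widehat{M}_n$ are $O_p(n^{-1/2})$ uniformly and the truncated second-moment term vanishes; this step is routine.

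The main obstacle is the control of the remainder $R_n$, and this is exactly where the hypothesis $\int_0^{\tau}\phi^2/(1-G(x-))\,dF<\infty$ enters. $R_n$ collects three contributions: the exterior region $E=I_H\setminus I$ on which no data fall, the tail discrepancy $\int_{\tau_n}^{\infty}$ coming from $(\widehat A-A)\phi$, and the second-order terms left over from linearising the Duhamel identity (replacing $\widehat S_n(x-)/S(x)$ by its limit inside the integral). Showing $\sqrt{n}\,R_n\overset{\Prob}{\to}0$ requires tail bounds near $\tau$ where $1-G$ may degenerate, together with care about possible shared atoms of $F$ and $G$; the weight $1/(1-G(x-))$ in the hypothesis is calibrated precisely to dominate these terms, which is the technical heart of the argument and the point where this result improves on earlier i.i.d.-expansion approaches. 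Finally, the finiteness $\sigma^2<\infty$ follows from the Efron--Johnstone contraction bound $\int_0^{\tau}(A\phi)^2/(1-G(x-))\,dF\le \int_0^{\tau}\phi^2/(1-G(x-))\,dF$, which converts the stated moment condition into square-integrability of $A\phi$ against the limiting measure. Combining Rebolledo's conclusion for $\sqrt{n}\,\widehat{M}_n$ with $\sqrt{n}\,R_n\overset{\Prob}{\to}0$ through Slutsky's lemma yields the claim.
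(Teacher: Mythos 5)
The paper does not prove this statement at all: it is imported directly from \citet{akritas2000} (see also \citet{gill1983large}), so there is no internal proof to compare against. Your sketch reconstructs, correctly in outline, the martingale argument behind the cited result, and it is the same machinery the paper itself assembles for its bivariate extensions: your starting identity is Gill's version of Duhamel, and the representation you aim for is exactly the paper's Equation~\eqref{eqn:quickDhha}, while the tail bounds you anticipate are those of Proposition~\ref{prop:ProbBounds}.

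Three concrete corrections, none fatal. First, a sign and an exactness slip: substituting Duhamel's equation (Proposition~\ref{Prop:Duhamel}) and applying Fubini gives $\int_0^{\tau_n}\phi\,d(\widehat F_n-F)=+\int_0^{\tau_n}\frac{\widehat S_n(x-)}{Y(x)}(\widehat A\phi)(x)\,dM(x)$, with a \emph{plus} sign and with $\widehat A$ rather than $A$; at this stage the identity is exact, with no remainder. (Sanity check with $\phi\equiv 1$: the right side is $S(\tau_n)L(\tau_n)=S(\tau_n)-\widehat S_n(\tau_n)$, matching the left side.) The sign is immaterial for the symmetric Gaussian limit, but the genuine remainder is only what you correctly identify as the tail: by Equation~\eqref{eqn:diffAes}, $((\widehat A-A)\phi)(x)=S(x)^{-1}\int_{\tau_n}^{\tau}\phi\,dF$, producing $L(\tau_n)\int_{\tau_n}^{\tau}\phi\,dF$ with $L(\tau_n)=1-\widehat S_n(\tau_n)/S(\tau_n)=O_p(1)$, plus the exterior piece $\int_{\tau_n}^{\tau}\phi\,dF$; both are $o_p(n^{-1/2})$ via Cauchy--Schwarz, $n(1-H(\tau_n))=O_p(1)$ (Proposition~\ref{prop:ProbBounds}.\ref{prop:ProbBound4}) and the hypothesis --- the same scheme the paper uses in Lemma~\ref{lemma:psiR4=0}. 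Second, your proposed linearisation (replacing $\widehat S_n(x-)/Y(x)$ by its limit before applying Rebolledo) is unnecessary and only creates an extra term to control; keep the predictable integrand and prove convergence of the predictable variation directly. For that step, a plain uniform law of large numbers does \emph{not} suffice, because $n/Y(x)$ is unbounded near $\tau_n$: you need the dominations $\widehat S_n(t)\leq \beta^{-1}S(t)$ and $Y(t)/n\geq \beta(1-H(t-))$ for $t\leq\tau_n$ (Proposition~\ref{prop:ProbBounds}, items i--ii), together with dominated convergence against the integrable envelope $S(x)(A\phi)^2(x)/(1-H(x-))$, whose integrability is exactly the Efron--Johnstone finiteness of $\sigma^2$ that you invoke. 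Third, the Lindeberg/jump condition is not ``routine'': the jumps of your rescaled martingale are of size $\sqrt{n}\,\widehat S_n(x-)|(A\phi)(x)|\,|\Delta M(x)|/Y(x)$, where $A\phi$ need not be bounded, $Y(\tau_n)$ can be as small as $1$, and since the paper allows atoms of $F$ the jumps of $N$ need not be bounded by $1$; the truncated second-moment condition must be verified with the same weighted domination, and this is precisely where the cited proofs of Gill and Akritas spend their effort. With those repairs your skeleton is the standard, correct proof of the stated theorem.
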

Then, by applying the previous CLT, we obtain the following Corollary.

\begin{corollary}\label{cor:I1nondege}
Consider $\phi(x)=\int_{0}^{\tau}K(x,y)dF(y)$. Then, under  Condition~\ref{assu:nondegenerate}, $\int_0^\tau\frac{\phi(x)^2}{1-G(x-)}dF(x)<\infty$, and thus  
\begin{align*}
\sqrt{n}2\alpha \overset{\mathcal D}{\to} N(0,4\sigma^2),
\end{align*}
where $\sigma^2 = \int_{0}^{\tau}\frac{S(s)}{1-H(s-)}(A\phi)^2(s)dF(s)<\infty$.
\end{corollary}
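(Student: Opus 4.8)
The plan is to reduce the statement to a direct application of Akritas's Central Limit Theorem (Theorem~\ref{Thm: Akritas}), so that the only genuine work is verifying that the univariate projection $\phi(x)=\int_0^\tau K(x,y)\,dF(y)$ satisfies the integrability hypothesis $\int_0^\tau \frac{\phi(x)^2}{1-G(x-)}\,dF(x)<\infty$ required by that theorem. Once this is in place, the convergence and the value of the limiting variance are immediate.

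First I would bound $\phi(x)^2$ by Cauchy--Schwarz applied with respect to the finite measure $dF(y)$ on $(0,\tau]$, treating $K(x,\cdot)$ against the constant function $1$:
\begin{align*}
\phi(x)^2 = \left(\int_0^\tau K(x,y)\,dF(y)\right)^2 \leq F(\tau)\int_0^\tau K(x,y)^2\,dF(y) \leq \int_0^\tau K(x,y)^2\,dF(y),
\end{align*}
where the last step uses $F(\tau)\leq 1$. Dividing by $1-G(x-)$ and integrating $dF(x)$ then yields
\begin{align*}
\int_0^\tau \frac{\phi(x)^2}{1-G(x-)}\,dF(x) \leq \int_0^\tau\int_0^\tau \frac{K(x,y)^2}{1-G(x-)}\,dF(y)\,dF(x),
\end{align*}
and the right-hand side is finite precisely by Condition~\ref{assu:nondegenerate}.i. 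This establishes the claimed integrability of $\phi$.

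With integrability in hand I would invoke Theorem~\ref{Thm: Akritas} directly with this $\phi$, obtaining $\sqrt{n}\alpha = \sqrt{n}\int_0^\tau \phi(x)\,d(\widehat F_n-F)(x)\overset{\mathcal D}{\to} N(0,\sigma^2)$ with $\sigma^2=\int_0^\tau \frac{S(s)}{1-H(s-)}(A\phi)^2(s)\,dF(s)$; the finiteness of $\sigma^2$ is exactly the consequence of the integrability of $\phi$ noted by \citet{Efron1990}. Since multiplying a random variable converging to $N(0,\sigma^2)$ by the constant $2$ produces convergence to $N(0,4\sigma^2)$, I conclude $\sqrt{n}2\alpha\overset{\mathcal D}{\to}N(0,4\sigma^2)$, as required.

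Because the argument is a one-line Cauchy--Schwarz estimate followed by an appeal to an already-stated theorem, there is no substantial obstacle here; the only point meriting care is confirming that passing from the bivariate integrability of Condition~\ref{assu:nondegenerate}.i to the univariate integrability of $\phi$ costs essentially nothing. This is precisely the single \emph{Cauchy--Schwarz gap} separating Condition~\ref{assu:nondegenerate}.i from Efron's sharp finiteness condition discussed in Section~\ref{sec:comparisonconditions}, which is also the reason the condition is stated in bivariate form rather than directly in terms of $\phi$.
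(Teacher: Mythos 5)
Your proposal is correct and matches the paper's (implicit) proof exactly: the paper likewise obtains $\int_0^\tau \frac{\phi(x)^2}{1-G(x-)}\,dF(x)<\infty$ from Condition~\ref{assu:nondegenerate}.i via a single Cauchy--Schwarz step (the ``one Cauchy--Schwarz inequality gap'' noted in Section~\ref{sec:comparisonconditions}) and then applies Theorem~\ref{Thm: Akritas} directly, with the factor $2$ handled by continuous mapping. No gaps.
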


\subsection{Treatment of {$\beta$}{}}
Let $R_1,R_2$ be two subsets of $\R_+$. Denote by $\beta_{R_1\times R_2}$, the integral
\begin{align*}
\beta_{R_1\times R_2}=\int_{R_1}\int_{R_2}K(x,y)d(\widehat{F}_n-F)(x)d(\widehat{F}_n-F)(y).
\end{align*}
 Observe that $\beta=\beta_{I_{H}\times I_{H}}=\beta_{I_{H}^2}$, and that $\beta$ can be decomposed into $\beta=\beta_{I^2}+\beta_{(I^2)^c}$, where $(I^2)^c = (E\times E) \cup (I\times E) \cup (E\times I)$. We recall that $I$, $E$ and  $I_H$ are defined in Section~\ref{sec:notationinteriorExterior}. To avoid extra parentheses, we write $I^{2c}$ instead of $(I^2)^c$.

In Section \ref{sec:Exterior}, we study the asymptotic properties of $\beta_{I^{2c}}$, obtaining as main result the following Lemma.

\begin{lemma}\label{lemma:exteriorN}
Under Condition~\ref{assu:nondegenerate}, it holds that
$\sqrt{n}\beta_{I^{2c}} = o_p(1)$,
and under Condition~\ref{assu:degenerate}, it holds that
$n\beta_{I^{2c}} = o_p(1).$
\end{lemma}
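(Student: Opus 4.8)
The plan is to bound the exterior contribution $\beta_{I^{2c}}$ by controlling the tail behaviour of $\widehat{F}_n-F$ beyond $\tau_n$. Since $\beta_{I^{2c}} = \beta_{E^2} + \beta_{I\times E} + \beta_{E\times I}$ and the kernel is symmetric, by symmetry it suffices to handle the two genuinely distinct pieces $\beta_{E^2}$ and $\beta_{I\times E}$. The key structural fact is that on the exterior region $E = I_H \setminus I = (\tau_n,\tau]$ (or $(\tau_n,\tau)$), the estimator $\widehat{F}_n$ is frozen at $\widehat{F}_n(\tau_n)$, so $d\widehat{F}_n$ places no mass on $E$. Consequently, integrating $d(\widehat{F}_n-F)$ over $E$ reduces to integrating against $-dF$, i.e. only the deterministic part survives on the exterior. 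This turns $\beta_{E^2}$ and $\beta_{I\times E}$ into integrals that are controlled by $F(\tau)-F(\tau_n) = S(\tau_n)-S(\tau)$, the unobserved tail mass.

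First I would write each exterior piece explicitly. For $\beta_{E^2}$ we get $\int_E\int_E K(x,y)\,dF(x)\,dF(y)$ (the cross and $d\widehat F_n$ terms vanish on $E$), whose magnitude is bounded by a Cauchy-Schwarz-type estimate involving Condition~\ref{assu:nondegenerate}.i (resp.~\ref{assu:degenerate}.i) together with the tail mass on $E$. For $\beta_{I\times E} = \int_I \int_E K(x,y)\,d(\widehat{F}_n-F)(x)\,(-dF(y))$, I would bound the inner integral over $y\in E$ by the tail mass and the kernel's integrability, and handle the outer $d(\widehat{F}_n-F)(x)$ integral over the interior using the known $O_p(n^{-1/2})$ behaviour of univariate Kaplan-Meier integrals (via Theorem~\ref{Thm: Akritas} or the martingale representation). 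The crux is therefore to show that the tail mass $S(\tau_n)-S(\tau)$ decays fast enough: one needs $\sqrt{n}(S(\tau_n)-S(\tau)) \overset{\Prob}{\to} 0$ in the non-degenerate case and $n(S(\tau_n)-S(\tau))\overset{\Prob}{\to}0$ in the degenerate case, each combined with the appropriate integrability condition so that the kernel-weighted version of the tail also vanishes at the required rate.

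The main obstacle will be establishing the correct rate of decay for the exterior tail mass and, more delicately, for the \emph{kernel-weighted} tail integrals such as $\int_E |\phi(y)|\,dF(y)$ or $\int_E\int_E |K(x,y)|\,dF(x)dF(y)$, rather than just the bare mass $F(\tau)-F(\tau_n)$. Because $\tau_n$ is random and approaches $\tau$ from below, I expect to use a known estimate on the speed at which $\tau_n \to \tau$ (controlled by $1-H$ near $\tau$) and to combine it with the dominated-convergence consequence of Conditions~\ref{assu:nondegenerate}--\ref{assu:degenerate}: the integrability of $K(x,y)^2/(1-G(x-))$ forces the tail contribution $\int_E\int_E(\cdot)$ to be $o_p$ of the required order once weighted against the vanishing mass on $E$. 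I would split according to whether $\tau$ is an atom of $H$ or not, since $E$ is either half-open or open accordingly, and treat the atomic case (where $E$ may be empty or a single point) separately as it is typically easier. The two cases of the Lemma differ only in the scaling factor, so after proving the sharper $n$-rate bound under Condition~\ref{assu:degenerate}, the $\sqrt{n}$-rate bound under Condition~\ref{assu:nondegenerate} follows by the same argument with weaker requirements.
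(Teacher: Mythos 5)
Your decomposition and the observation that $d\widehat F_n$ puts no mass on $E$ (so only $dF$ survives there) match the paper's starting point, but the plan then rests on a claim that is false: $n(S(\tau_n)-S(\tau))\overset{\Prob}{\to}0$ does not hold, even in the uncensored case. With $G=0$ and $F$ continuous, $S(\tau_n)$ is the minimum of $n$ i.i.d.\ uniforms, so $nS(\tau_n)$ converges in distribution to a nondegenerate (exponential) limit, i.e.\ $n(S(\tau_n)-S(\tau))=\Theta_p(1)$; with censoring the situation is worse, since only $n(1-H(\tau_n))=O_p(1)$ is available (Proposition~\ref{prop:ProbBounds}.\ref{prop:ProbBound4}, due to Yang), and $S(\tau_n)=(1-H(\tau_n))/(1-G(\tau_n))$ can decay much more slowly than $1/n$. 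The paper's Lemma~\ref{lemma:psiR4=0} never asks the tail mass to vanish at rate $n^{-1}$: after Cauchy--Schwarz it multiplies and divides by $1-G(\tau_n)$ so as to produce exactly the factor $n(1-H(\tau_n))=O_p(1)$ (bounded, not vanishing) times the tail $\int_{\tau_n}^\tau\int_{\tau_n}^\tau K(x,y)^2\,\frac{dF(x)dF(y)}{(1-G(x-))(1-G(y-))}$, which is $o_p(1)$ solely because it is the tail of the convergent integral in Condition~\ref{assu:degenerate}.i and $\tau_n\to\tau$. So all the smallness must be extracted from the $G$-weighted tail integral, with the mass factor merely tight; your ``crux'' as stated would fail, and the weighting step that rescues it is precisely what your sketch omits.

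The second gap is the cross term $\beta_{E\times I}$. You propose to integrate out $y\in E$ and then apply the known $O_p(n^{-1/2})$ behaviour of univariate Kaplan--Meier integrals to $\int_I\psi_n(x)\,d(\widehat F_n-F)(x)$, but $\psi_n(x)=\int_E K(x,y)\,dF(y)$ is a \emph{random} function of $x$ (through $\tau_n$) that changes with $n$, so Theorem~\ref{Thm: Akritas} does not apply; moreover even heuristically the product ``interior $O_p(n^{-1/2})$ times tail mass'' is insufficient in the degenerate case, where the total must be $o_p(n^{-1})$ and, as noted above, the bare tail mass is not $o_p(n^{-1/2})$. The paper's route (Lemmas~\ref{lemma:npsi1-2R2=01} and \ref{lemma:psi1-2R2=01}) requires genuinely different machinery: Duhamel's equation (Proposition~\ref{Prop:Duhamel}) turns the interior integral into $\int_0^{\tau_n}\frac{\widehat S_n(x-)}{Y(x)}(\widehat A_1K)(x,y)\,dM(x)$; the split $\widehat A_1=A_1+(\widehat A_1-A_1)$ produces a term equal to $L(\tau_n)\,\beta_{E\times E}$ with $L(\tau_n)=1-\widehat S_n(\tau_n)/S(\tau_n)=O_p(1)$, reducing it to the already-controlled corner piece; and the $A_1$ part is handled by bounding $n\int\frac{M_y^\star(\tau_n)^2}{1-G(y-)}dF(y)$, where the random integration limit $\tau_n$ must first be replaced by a deterministic $T_n=\inf\{t:H(t)\geq 1-C/n\}$ on a high-probability event, after which Lemma~\ref{lemma:IntegralOfMartingales}, the Lenglart--Rebolledo inequality, and the Efron--Johnstone-based $L^2$ bounds on $(A_1K)$ from Lemma~\ref{Lemma:FiniteOpe} close the argument. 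The need for the operator $A_1$, the $L(\tau_n)\beta_{E\times E}$ cancellation, and the deterministic-time substitution are missing ideas in your proposal, not merely suppressed details; your correct observation about the atomic case ($\tau_n=\tau$ eventually when $H$ has an atom at $\tau$) does coincide with the paper's first reduction in Lemma~\ref{lemma:npsi1-2R2=01}.
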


The handling of the term $\beta_{I^2}$ is far more complicated since it contains all the important information about the limit distribution. In Section~\ref{sec:InteriorSec}, we transform $\beta_{I^2}$ into a more tractable object by performing a \textit{change of measure}, where instead of integrating with respect to $d(\widehat{F}_n-F)$, we integrate with respect to the measure $dM = dN -Yd\Lambda$. This is done by using Duhamel's equations (Proposition \ref{Prop:Duhamel}).  The main result of Section~\ref{sec:InteriorSec} is the following.

\begin{lemma}\label{lemma:nKMintoMartingale}
Under Condition~\ref{assu:nondegenerate}, it holds that 
\begin{align}
\sqrt{n}\beta_{I^2} = \sqrt{n}\int_{I^2} \frac{\widehat S_n(x-) \widehat S_n(y-)}{Y(x)Y(y)}K'(x,y)dM(x)dM(y)+o_p(1),\label{eqn:MartingaleRepresqrtn}
\end{align}
and under Condition~\ref{assu:degenerate}, it holds that
\begin{align}
n\beta_{I^2} = n\int_{I^2} \frac{\widehat S_n(x-) \widehat S_n(y-)}{Y(x)Y(y)}K'(x,y)dM(x)dM(y)+o_p(1),\label{eqn:MartingaleRepren}
\end{align}
where the kernel $K'$ is defined in Equation~\eqref{eqn:defK'}.
\end{lemma}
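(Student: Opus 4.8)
The plan is to reduce the double Kaplan--Meier integral to a double stochastic integral against $dM$ by applying, \emph{pathwise}, a single univariate identity in each coordinate, and then to replace the data-dependent operator $\widehat A$ by its population version $A$. The univariate building block I would establish first is that, for any (possibly random) integrand $\psi$,
\[
\int_0^{\tau_n}\psi(x)\,d(\widehat F_n-F)(x)=\int_0^{\tau_n}\frac{\widehat S_n(x-)}{Y(x)}(\widehat A\psi)(x)\,dM(x).
\]
To obtain it I would use $d\widehat F_n(x)=\widehat S_n(x-)\,dN(x)/Y(x)$ and $dF(x)=S(x-)\,d\Lambda(x)$ together with $dM=dN-Y\,d\Lambda$ to write $d(\widehat F_n-F)(x)=\frac{\widehat S_n(x-)}{Y(x)}\,dM(x)+(\widehat S_n(x-)-S(x-))\,d\Lambda(x)$. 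The first term already has the target form; into the drift term I would substitute the Duhamel representation (Proposition~\ref{Prop:Duhamel}) $\widehat S_n(x-)-S(x-)=-S(x-)\int_0^{x-}\frac{\widehat S_n(s-)}{S(s)}\frac{dM(s)}{Y(s)}$ and apply, for fixed $\omega$, an ordinary Fubini to exchange the $d\Lambda(x)$- and $dM(s)$-integrals. Since $S(x-)\,d\Lambda(x)=dF(x)$, the inner integral collapses to $\frac1{S(s)}\int_s^{\tau_n}\psi(x)\,dF(x)$, which is precisely the correction defining $\widehat A\psi$. The essential feature is that this is a deterministic rearrangement of finite signed measures, so it requires \emph{no} predictability of $\psi$.

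Because of this, I would apply the identity twice. Holding $y$ fixed, $K(\cdot,y)$ is deterministic, so the identity in the first coordinate yields $\beta_{I^2}=\int_I g(y)\,d(\widehat F_n-F)(y)$ with $g(y)=\int_I \frac{\widehat S_n(x-)}{Y(x)}(\widehat A_1K)(x,y)\,dM(x)$. Applying the identity a second time in $y$ (legitimate pathwise, even though $g$ is random), and using a further pathwise Fubini to pull $\widehat A_2$ inside the $dM(x)$-integral together with the commutativity $\widehat A_1\widehat A_2=\widehat A_2\widehat A_1$, I would reach the \emph{exact} representation
\[
\beta_{I^2}=\int_{I^2}\frac{\widehat S_n(x-)\widehat S_n(y-)}{Y(x)Y(y)}(\widehat A_1\widehat A_2K)(x,y)\,dM(x)\,dM(y).
\]
No separate diagonal correction is produced at this stage: as this is an exact rearrangement of the product integral $\beta_{I^2}$, the full diagonal ($x=y$) contribution is already carried inside the double stochastic integral, to be analysed later.

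It then remains only to replace $\widehat A_1\widehat A_2K$ by $K'=A_1A_2K$, which is the sole source of the stated $o_p$ error. Writing $\widehat A_1\widehat A_2-A_1A_2=(\widehat A_1-A_1)\widehat A_2+A_1(\widehat A_2-A_2)$ and invoking Equation~\eqref{eqn:diffAes}, each factor $(\widehat A_i-A_i)$ contributes a tail term $\frac1{S(\cdot)}\int_{\tau_n}^{\infty}(\cdots)\,dF$ that vanishes as $\tau_n\to\tau$. I would then control the scaled remainder
\[
\sqrt n\int_{I^2}\frac{\widehat S_n(x-)\widehat S_n(y-)}{Y(x)Y(y)}\big[(\widehat A_1\widehat A_2-A_1A_2)K\big](x,y)\,dM(x)\,dM(y)
\]
(and its $n$-scaled analogue) in probability by estimating its conditional second moment through the predictable variation $\langle M\rangle$ of $M$, for which $d\langle M\rangle=Y\,d\Lambda$ up to the standard jump correction. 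This reduces the double $dM$-integral to an ordinary double integral against $dF$ weighted by $(1-G)^{-1}$ factors (recall $Y(x)/n\to 1-H(x-)=(1-G(x-))S(x-)$), whose finiteness is exactly guaranteed by Condition~\ref{assu:nondegenerate}.i, respectively Condition~\ref{assu:degenerate}.i, while the mass beyond $\tau_n$ supplies the decay to zero.

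I expect the technical heart of the argument to be precisely this final estimate: controlling the martingale double integral uniformly near the right endpoint $\tau_n$, where $Y$ becomes small and the Kaplan--Meier weights are most unstable. The weights $\widehat S_n(x-)/Y(x)$ and the censoring factors $(1-G(x-))^{-1}$ appearing in the conditions are exactly what is needed to tame this tail, and securing the sharper $o_p(n^{-1})$ rate in the degenerate case (rather than $o_p(n^{-1/2})$) will require the stronger integrability of Condition~\ref{assu:degenerate} together with a careful two-dimensional tail estimate. By contrast, the pathwise nature of the algebraic identity makes the reduction to the double $dM$-integral essentially bookkeeping once the univariate identity is in hand.
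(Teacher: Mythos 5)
Your reduction to the exact representation
\begin{align*}
\beta_{I^2}=\int_{I^2}\frac{\widehat S_n(x-)\,\widehat S_n(y-)}{Y(x)Y(y)}\,(\widehat A_1\widehat A_2K)(x,y)\,dM(x)\,dM(y)
\end{align*}
is correct and coincides with the paper's Equation~\eqref{eqn:quickDuha2} (the paper also obtains it from Duhamel's identity, Proposition~\ref{Prop:Duhamel}), and your observation that this step is a pathwise rearrangement needing no predictability is sound, as is the remark that no separate diagonal correction arises here. The gap is in your final step. You propose to control the remainder
\begin{align*}
\sqrt n\int_{I^2}\frac{\widehat S_n(x-)\,\widehat S_n(y-)}{Y(x)Y(y)}\bigl[(\widehat A_1\widehat A_2-A_1A_2)K\bigr](x,y)\,dM(x)\,dM(y)
\end{align*}
``through the predictable variation $\langle M\rangle$'', but every tool of that kind (Theorem~\ref{thm:doubleMartingale}, or any identity of the form $\E (W^2)=\E\langle W\rangle$) requires the integrand to be $\mathcal P$-measurable, and by Equation~\eqref{eqn:diffAes} each correction $(\widehat A_i-A_i)$ produces a tail term $\frac{1}{S(\cdot)}\int_{\tau_n}^{\tau}(\cdots)\,dF$ that depends on $\tau_n=\max\{X_1,\ldots,X_n\}$. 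That is a functional of the entire sample --- it looks into the future --- so the error kernel is not predictable, the double stochastic integral need not be a martingale, and the compensator formula you invoke simply does not apply to it. Note that the main term survives precisely because $\widehat S_n(x-)\widehat S_n(y-)K'(x,y)/(Y(x)Y(y))$ is a product of left-continuous adapted univariate factors times a deterministic kernel (Proposition~\ref{coll:h predictable}); it is exactly the $\widehat A-A$ corrections that destroy this structure.

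The paper's proof is engineered around this obstruction, and you need the same (or an equivalent) device. Since by Equation~\eqref{eqn:diffAes} the correction factorizes as $\frac{1}{S(y)}\int_{\tau_n}^{\tau}(\cdots)\,dF(s)$, the corresponding $dM$-integral collapses to the scalar $L(\tau_n)=\int_0^{\tau_n}\frac{\widehat S_n(x-)}{S(x)Y(x)}\,dM(x)=1-\widehat S_n(\tau_n)/S(\tau_n)=O_p(1)$ (Lemma 2.4 of Gill plus Proposition~\ref{prop:ProbBounds}.\ref{prop:ProbBound1}); the non-predictable dependence on $\tau_n$ is thereby extracted as a multiplicative $O_p(1)$ factor, and the remaining objects are identified with $n\beta_{E\times I}$ and $n\beta_{E\times E}$, whose kernels $(A_1K)$ and $K$ are deterministic. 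These exterior terms are then shown to be $o_p(1)$ in Lemmas~\ref{lemma:psiR4=0}, \ref{lemma:npsi1-2R2=01} and~\ref{lemma:psi1-2R2=01} --- and even there the random limit $\tau_n$ must first be replaced by a deterministic threshold $T_n$ on a high-probability event (via Proposition~\ref{prop:ProbBounds}.\ref{prop:ProbBound4}) before Lenglart--Rebolledo can legitimately be applied. A secondary omission: the integrability you attribute directly to Conditions~\ref{assu:nondegenerate}.i and~\ref{assu:degenerate}.i actually concerns $(A_1K)^2$ and $(A_1A_2K)^2$, and transferring the $L^2$ conditions from $K$ to these requires the Efron--Johnstone-type bound of Lemma~\ref{Lemma:FiniteOpe}; that part is fixable, but it is not automatic.
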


\subsection{Proof of Theorem \ref{thm:convergenceNondege}}

In order to prove  Theorem \ref{thm:convergenceNondege}, we require the following intermediate result, which is formally proved in Section~\ref{sec:Interior}.

\begin{lemma}\label{lemma:sqrtDMarting=0}
Under Condition~\ref{assu:nondegenerate} it holds that
\begin{align*}
\sqrt{n}\int_{I^2} \frac{\widehat S_n(x-) \widehat S_n(y-)}{Y(x)Y(y)}K'(x,y)dM(x)dM(y) = o_p(1).
\end{align*}
\end{lemma}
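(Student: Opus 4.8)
The plan is to split the double martingale integral
\[
D_n := \int_{I^2}\frac{\widehat S_n(x-)\widehat S_n(y-)}{Y(x)Y(y)}K'(x,y)\,dM(x)\,dM(y)
\]
into a strictly off-diagonal part and a diagonal part, and to show that each is $o_p(n^{-1/2})$. Writing $g(x,y)=\frac{\widehat S_n(x-)\widehat S_n(y-)}{Y(x)Y(y)}$, the product formula for counting-process stochastic integrals gives
\[
D_n = 2\int_{I}\Big(\int_{(0,y)}g(x,y)K'(x,y)\,dM(x)\Big)dM(y) + \int_{I}g(x,x)K'(x,x)\,d[M](x).
\]
The inner integrand of the first term is predictable in $y$, since the $y$-dependent factor $\widehat S_n(y-)/Y(y)$ is left-continuous and adapted while $K'(\cdot,y)$ is deterministic; atoms of $\Lambda$ and $G$ (which the paper allows) are absorbed into the predictable variation $\langle M\rangle(t)=\int_{(0,t]}Y(s)(1-\Delta\Lambda(s))\,d\Lambda(s)$, which I would use throughout to justify the decomposition and the subsequent moment identities.

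For the diagonal term, I would simply identify it with the quadratic-diagonal object treated in Lemma~\ref{lemma:sqrtnQD}: after writing $d[M]$ in terms of the jumps of $N$, it reduces to a weighted sum of the $W_i^2$-type increments, and Lemma~\ref{lemma:sqrtnQD} yields that $\sqrt n$ times this term is $o_p(1)$ under Condition~\ref{assu:nondegenerate}.ii. This is the only place Condition~\ref{assu:nondegenerate}.ii enters, and the square-root weight $\sqrt{S(x-)/(1-G(x-))}$ there reflects the finer, non-$L^2$ behaviour of the Kaplan--Meier weights near $\tau$.

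The off-diagonal part is the crux. The tempting route is a single second-moment bound: iterating the predictable-variation isometry across the outer and inner martingale integrals gives
\[
\E\big[(D_n^{\mathrm{od}})^2\big]\ \approx\ \frac{C}{n^2}\int_0^\tau\int_0^\tau \frac{K'(x,y)^2}{(1-G(x-))(1-G(y-))}\,dF(x)\,dF(y),
\]
but this requires the \emph{two-factor} integral, i.e.\ essentially Condition~\ref{assu:degenerate}.i, and may well be infinite under only Condition~\ref{assu:nondegenerate}.i. So an $L^2$ argument is not available, and instead I would exploit the product structure. For fixed $y$, the inner integral $\int_{(0,y)}g(x,y)K'(x,y)\,dM(x)$ is a univariate Kaplan--Meier--type martingale integral whose size is governed by the \emph{one-factor} quantity $\int\int \frac{K'(x,y)^2}{1-G(x-)}\,dF(x)\,dF(y)$, which is finite under Condition~\ref{assu:nondegenerate}.i because $K'=A_1A_2K$ inherits this integrability from $K$ via the boundedness of the forward operator $A$ on the weighted $L^2(dF/(1-G))$ and on $L^2(F)$ (the ``one Cauchy--Schwarz gap'' noted after Equation~\eqref{eqn:randomv8fb8h2}); this makes the inner integral $O_p(n^{-1/2})$. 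The outer integral would then be controlled \emph{in probability} rather than in $L^2$ — via Lenglart's inequality \cite{Flemming91} applied to its predictable variation, truncated near $\tau$ where $1-G\to 0$ — producing a further $O_p(n^{-1/2})$. Multiplying the two stages gives $D_n^{\mathrm{od}}=O_p(n^{-1})$, hence $\sqrt n\,D_n^{\mathrm{od}}=o_p(1)$.

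The main obstacle is precisely this off-diagonal estimate under the weaker one-factor condition: because the variance can diverge, the clean isometry bound fails and one is forced into an in-probability argument, where the delicate point is taming the tail region near $\tau$ (small $1-G$, small $Y$, small $\widehat S_n$) with a truncation that still leaves a vanishing bound after multiplication by $\sqrt n$. Once the off-diagonal term is shown to be $o_p(n^{-1/2})$ and the diagonal term is dispatched by Lemma~\ref{lemma:sqrtnQD}, combining the two halves of the decomposition yields $\sqrt n\,D_n=o_p(1)$, as claimed.
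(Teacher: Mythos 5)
Your proposal takes essentially the same route as the paper: the identical diagonal/off-diagonal decomposition $Q = Q_D + 2Q_C$, the diagonal dispatched by Lemma~\ref{lemma:sqrtnQD}, and the off-diagonal handled---exactly as in the paper's Lemma~\ref{lemma:snNDQ}---by abandoning the $L^2$ isometry (which would require the two-factor Condition~\ref{assu:degenerate}.i) in favour of an in-probability Lenglart--Rebolledo argument resting on the one-factor integrability of $K'=A_1A_2K$ supplied by Lemma~\ref{Lemma:FiniteOpe}. The only loose point is your closing ``multiply the two $O_p(n^{-1/2})$ stages'' summary, which the paper makes rigorous via a nested Lenglart argument---first on the martingale $Q_C$, then on the integrated submartingale $Z(t)=\int_0^{\tau}\frac{S(y)}{1-H(y-)}M_y^\star(t)^2\,dF(y)$, whose compensator vanishes by dominated convergence using $\ind_{\{x\leq\tau_n\}}/Y(x)\to 0$ and Equation~\eqref{eqn:FiniteOpe4}---so no truncation near $\tau$ is in fact needed in this interior estimate.
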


\begin{proof}[\textbf{Proof of Theorem \ref{thm:convergenceNondege}}]
Under Condition \ref{assu:nondegenerate}, Lemmas~\ref{lemma:exteriorN},~\ref{lemma:nKMintoMartingale} and~\ref{lemma:sqrtDMarting=0} prove $\sqrt{n}\beta \overset{\Prob}{\to} 0$, which together with  Corollary~\ref{cor:I1nondege} concludes the result.
\end{proof}

\subsection{Proof of Theorem \ref{thm:decompositionDege}}

We proceed to give proof to Theorem \ref{thm:decompositionDege} and Corollary \ref{Corollary:Vstatsdege}. Observe that under Condition \ref{assu:degenerate}, Lemma \ref{lemma:exteriorN}, and Equation~\eqref{eqn:MartingaleRepren} of Lemma~\ref{lemma:nKMintoMartingale} yield
\begin{align}
n\beta = \frac{1}{n}\int_{I^2} \frac{\widehat{S}_n(x-)\widehat{S}_n(y-)}{(Y(x)/n)( Y(y)/n)}K'(x,y)dM(x)dM(y)+o_p(1).\label{eqn:nbetaDecomposition12351}
\end{align}

Theorem \ref{lemma:afterMartingalenb}, states that  $\widehat{S}_n(x-)$ and $Y(x)/n$ in Equation~\eqref{eqn:nbetaDecomposition12351} can be substituted by their respective limits,  $S(x-)$ and $1-H(x-)$, obtaining that 
\begin{align}
n\beta=\frac{1}{n}\sum_{i=1}^n\sum_{j=1}^n
\int_{I_{H}^2} \frac{K'(x,y)}{(1-G(x-))(1-G(y-))}dM_i(x)dM_j(y)+o_p(1).
\label{eqn:randomtjtj48191}
\end{align}
The proof of Theorem  \ref{thm:decompositionDege} follows by noticing that the leading term in Equation~\eqref{eqn:randomtjtj48191} is a degenerate V-statistic.

\begin{proof}[\textbf{Proof of Theorem \ref{thm:decompositionDege}}]
Equation~\eqref{eqn:randomtjtj48191} states
\begin{align}
\beta = \frac{1}{n^2} \sum_{i=1}^n \sum_{j=1}^n J((X_i,\Delta_i),(X_j,\Delta_j)) + o_p(n^{-1}),\nonumber
\end{align}
where $J$ is the kernel defined in Equation~\eqref{eqn:Jfunction}, thus we deduce that $\beta$ is a canonical V-statistic up an error of order $o_p(n^{-1})$. From Condition \ref{assu:degenerate}, we can deduce that  $\E(J((X_1,\Delta_1),(X_2,\Delta_2))^2)<\infty$, and  that $J$ satisfies the following degeneracy condition
\begin{align*}
\E(J((X_i,\Delta_i),(x,r)))& = 0,\nonumber
\end{align*}
for all $(x,r) \in I_H\times\{0,1\}$ (see  \Appendice~\ref{sec:UStatsRepBeta}). Therefore, by applying standard results for degenerate V-statistics, e.g. \citep[Theorem 4.3.2.]{Koroljuk94}, it holds
\begin{align}
n\beta  \overset{\mathcal D}{\to} \E(J((X_1,\Delta_1),(X_1,\Delta_1)))+\Psi,\nonumber
\end{align}
where $\Psi = \sum_{i=1}^{\infty}\lambda_i (\xi_i^ 2-1)$, $\xi_1,\xi_2,\ldots$ are i.i.d. standard normal random variables, and the $\lambda_i$'s are the eigenvalues of the integral operator $T_J:L^2(X,\Delta) \to L^2(X,\Delta)$ associated with $J$.  

Finally, note that $\E(\Psi) = 0$, and that 
\begin{align*}
\E(\Psi^2) = 2\sum_{i=1}^{\infty} \lambda_i^2&=2\E( J((X_1,\Delta_1),(X_2,\Delta_2))^2)\\
&=2\int _0^{\tau} \int_{0}^{\tau} \frac{S(x)S(y)K'(x,y)^2}{(1-H(x-))(1-H(y-))}dF(x)dF(y),
\end{align*}
where the last equality is formally verified in \Appendice~\ref{sec:UStatsRepBeta}.
\end{proof}

The rest of the paper is devoted to prove Lemmas \ref{lemma:exteriorN}, \ref{lemma:nKMintoMartingale} and \ref{lemma:sqrtDMarting=0}, and Theorem \ref{lemma:afterMartingalenb}.

\section{Proofs II: Preliminary Results}
The following results are going to be used several times in this paper.
\subsection{Some Results for Counting Processes}

\begin{proposition}\label{lemma:supConverSH}
The following results holds a.s.
\begin{enumerate}[i)]
\item $\lim_{n\to \infty} \sup_{t\leq \tau} |\widehat{S}_n(t)-S(t)| = 0$,
\item $\lim_{n \to \infty}\sup_{t \leq \tau} |Y(t)/n-H(t-)| = 0$, and
\item for every fixed $t^\star>0$ such that $1-H(t^\star-)>0$,
\begin{align*}
\sup_{t\leq t^\star} \left |n\widehat{S}_n(t-)/Y(t)  -1/(1-G(t-))\right| \to 0.
\end{align*}
\end{enumerate}
\end{proposition}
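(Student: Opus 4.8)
**The plan is to prove the three uniform almost-sure convergence statements using the tools of empirical process theory and the Glivenko–Cantelli / Kaplan-Meier consistency machinery, handling each part in turn.**

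For part (ii), I would observe that $Y(t)/n = \frac{1}{n}\sum_{i=1}^n \ind_{\{X_i \geq t\}}$ is the empirical estimator of $\Prob(X_1 \geq t) = H(t-)$ (using right-continuity conventions). Since $t \mapsto \ind_{\{X_i \geq t\}}$ is monotone, the function class $\{\ind_{\{x \geq t\}} : t > 0\}$ is a Glivenko–Cantelli class, so the classical Glivenko–Cantelli theorem gives $\sup_{t} |Y(t)/n - H(t-)| \to 0$ almost surely. Restricting the supremum to $t \leq \tau$ only makes the claim weaker, so this is immediate.

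For part (i), the uniform consistency of the Kaplan-Meier estimator on $[0,\tau]$ is a classical result. The cleanest route is to invoke the uniform strong consistency of $\widehat{S}_n$ on compact subsets of $I_H$ together with control at the endpoint $\tau$; alternatively, one writes $\log \widehat{S}_n(t)$ (on the event that $\widehat{S}_n > 0$) in terms of the Nelson–Aalen-type sums $\sum_{X_{[i:n]} \leq t}$ of the increments, compares these to $\Lambda(t)$ via the martingale $M$, and uses part (ii) to bound $Y(t)/n$ away from zero uniformly on $[0,\tau]$ when $1-H(\tau-) > 0$. The place requiring care is the behaviour near $\tau$, especially if $H$ has an atom at $\tau$; here I would rely on the fact that $1-H(\tau-) > 0$ by the definition of $\tau$ in Section~\ref{sec:notationinteriorExterior}, which keeps $Y(\tau)/n$ bounded below and makes the endpoint harmless. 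Since this is a standard theorem, I would cite it (e.g. via the counting-process treatment in \cite{Flemming91}) rather than reprove it from scratch.

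For part (iii), I would combine parts (i) and (ii). Fix $t^\star$ with $1-H(t^\star-) > 0$. Write
\begin{align*}
\frac{n\widehat{S}_n(t-)}{Y(t)} - \frac{1}{1-G(t-)} = \frac{\widehat{S}_n(t-)}{Y(t)/n} - \frac{S(t-)}{1-H(t-)},
\end{align*}
using $1-H(t-) = S(t-)(1-G(t-))$ from the non-informative censoring identity, so that the target limit equals $S(t-)/(1-H(t-))$. On $[0,t^\star]$, part (ii) forces $Y(t)/n \geq \tfrac{1}{2}(1-H(t^\star-)) > 0$ eventually and uniformly, so the denominators are uniformly bounded away from zero. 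Then the difference of ratios is controlled by $\sup_{t \leq t^\star}|\widehat{S}_n(t-) - S(t-)|$ and $\sup_{t \leq t^\star}|Y(t)/n - H(t-)|$ via the elementary inequality $|a/b - a'/b'| \leq (|a-a'| + |a'||b-b'|/b')/b$, both of which vanish a.s. by parts (i) and (ii).

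The main obstacle is the endpoint analysis in part (i): the Kaplan-Meier estimator can behave delicately at $\tau$ when there is censoring at the largest observation or when $H$ has an atom at $\tau$, and the uniform bound must hold up to and including $\tau$. The key leverage is that $1-H(\tau-) > 0$ keeps the risk set non-negligible, so the denominators never collapse on $[0,\tau]$; part (iii) is restricted to a strictly smaller $t^\star < \tau$ precisely so that $Y(t)/n$ stays uniformly bounded below, which is why the more delicate endpoint issue only arises in part (i).
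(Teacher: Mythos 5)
Your overall architecture coincides with the paper's: the paper disposes of item (ii) by the Glivenko--Cantelli theorem, of item (i) by citing the strong uniform consistency result of \citet{stute93TheStrong}, and of item (iii) by combining the first two items. Your treatment of (ii) and (iii) is correct and fills in exactly the details the paper leaves implicit: on $\{t \leq t^\star\}$ with $1-H(t^\star-)>0$, item (ii) eventually bounds $Y(t)/n$ below by $\tfrac12(1-H(t^\star-))$, the identity $1-H(t-)=S(t-)(1-G(t-))$ identifies the limit, and the elementary ratio inequality together with $\sup_t|\widehat S_n(t-)-S(t-)|\leq \sup_t|\widehat S_n(t)-S(t)|$ (so item (i) also controls left limits) closes the argument. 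Citing the standard theorem for (i) is likewise the paper's own route.

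There is, however, a genuine error in your fallback justification of (i), repeated in your closing paragraph: the claim that $1-H(\tau-)>0$ holds ``by the definition of $\tau$'' is false in general. By the definitions in Section~\ref{sec:notationinteriorExterior}, $\tau=\sup\{t:1-H(t)>0\}$ and $1-H(\tau-)>0$ holds if and only if $\tau\in I_H$, i.e.\ if and only if $H$ has an atom at $\tau$; in the generic case of continuous $H$ (e.g.\ $S(x)=e^{-x}$ with no or exponential censoring, where $\tau=\infty$) one has $H(\tau-)=1$, and $Y(t)/n\to 0$ as $t\to\tau$. Consequently the risk set is \emph{not} uniformly non-negligible on $(0,\tau]$, and your proposed Nelson--Aalen comparison, which requires $Y(t)/n$ bounded away from zero up to $\tau$, fails precisely in the delicate regime it was meant to handle. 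This is exactly why item (iii) is restricted to $t^\star$ with $1-H(t^\star-)>0$ while item (i) is not an elementary consequence of (ii) plus ratio bounds: the uniform consistency of $\widehat S_n$ up to $\tau$ despite the vanishing risk set is the nontrivial content of the Stute--Wang-type theorem, and citing it (your primary route, and the paper's) is the correct move, whereas the sketched self-contained endpoint argument would not go through.
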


Item i. is due to \citet{stute93TheStrong}, item ii. is the Glivenko-Cantelli theorem, and item iii. follows from the two previous items.

\begin{proposition}\label{prop:ProbBounds}
Let $\beta \in (0,1)$, then the following results hold true:
\begin{enumerate}[i)]
\item $\Prob(\widehat{S}_n(t) \leq \beta^{-1}S(t),\quad\forall t \leq \tau_n) \geq 1-\beta$,\label{prop:ProbBound1}
\item $\Prob\left(Y(t)/n \geq \beta\{1-H(t-)\},\quad\forall t\leq\tau_n\right)\geq1-e(1/\beta) e^{-1/\beta}$,  \label{prop:ProbBound2}
\item $\Prob\left(Y(t)/n \leq \beta^{-1}\{1-H(t-)\},\quad\forall t\leq\tau_n\right)\geq1-\beta$, and\label{prop:ProbBound3}
\item $\Prob \left( n(1-H(\tau_n))\leq \beta^{-1}\right) \geq 1-e^{-\beta^{-1}}.$ \label{prop:ProbBound4}
\end{enumerate}
i.e. i) $\sup_{t\leq \tau_n} \frac{\widehat S_n(t)}{S(t)} = O_p(1)$, ii) and  iii) $\sup_{t\leq \tau_n}Y(t)/(n(1-H(t-)) = \Theta_p(1)$, and iv) $n(1-H(\tau_n)) = O_p(1)$.
\end{proposition}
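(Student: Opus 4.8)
The plan is to treat the four items separately, grouping (i) and (iii) together because both follow from a nonnegative-martingale maximal inequality (which is why they carry the clean factor $1-\beta$), then disposing of (iv) by an elementary extreme-value computation, and finally attacking (ii), whose nonstandard constant forces a genuine large-deviation argument.

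For (i), I would use Duhamel's equation (Proposition~\ref{Prop:Duhamel}) to write, on the set $\{Y>0\}$ which contains $(0,\tau_n]$,
\[
\frac{\widehat{S}_n(t)}{S(t)} = 1 - \int_0^t \frac{\widehat{S}_n(s-)}{S(s)}\frac{dM(s)}{Y(s)}.
\]
On $(0,\tau_n]$ the integrand is predictable and bounded (there $Y(s)\ge 1$, $\widehat{S}_n(s-)\le 1$, and $S(s)\ge S(\tau_n)>0$), so the stochastic integral against $M$ is a zero-mean martingale and $t\mapsto \widehat{S}_n(t)/S(t)$ is a nonnegative martingale started at $1$. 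The maximal inequality for nonnegative supermartingales then gives $\Prob(\sup_{t\le\tau_n}\widehat{S}_n(t)/S(t)\ge\beta^{-1})\le\beta$, which is exactly (i). For (iii) I would first observe that $Y(t)/n = 1-H_n(t-)$, the left-continuous empirical survival function, with mean $1-H(t-)$. After the probability integral transform (using the right-continuous inverse of $H$ to handle atoms), this reduces to controlling $\Gamma_n(v)/v$, where $\Gamma_n$ is the empirical distribution of $n$ uniforms and $v=1-H(t-)$. Since $v\mapsto \Gamma_n(v)/v$ is a nonnegative reverse-time martingale anchored at $\Gamma_n(1)/1=1$, the same maximal inequality yields $\Prob(\sup_{t\le\tau_n}\frac{Y(t)/n}{1-H(t-)}\ge\beta^{-1})\le\beta$, giving (iii).

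For (iv), since $1-H$ is nonincreasing and $\tau_n=\max_i X_i$, one has $1-H(\tau_n)=\min_i(1-H(X_i))$, so by independence $\Prob(n(1-H(\tau_n))>u)=\Prob(1-H(X_1)>u/n)^n$. The elementary probability-integral-transform inequality $\Prob(1-H(X_1)>v)\le 1-v$ (an equality when $H$ is continuous, an inequality in the presence of atoms) then gives $\Prob(n(1-H(\tau_n))>u)\le(1-u/n)^n\le e^{-u}$, and taking $u=\beta^{-1}$ proves (iv).

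The main obstacle is (ii). Passing again to $\Gamma_n(v)/v$, I now need a lower bound on $\inf_v \Gamma_n(v)/v$ over the observed range $v\ge\min_i(1-H(X_i))=1-H(\tau_n)$, and here the nonnegative-martingale maximal inequality is useless, since it only controls the upper tail. The lower deviation must instead be handled by a genuine exponential (Chernoff/Poissonisation) inequality for the empirical survival function, of Shorack--Wellner/Wellner type, made uniform over $t\le\tau_n$; optimising such a bound is precisely what produces the constant $e(1/\beta)e^{-1/\beta}$. The restriction to $t\le\tau_n$ is essential, because beyond $\tau_n$ the ratio drops to $0$, so it is the data-driven cutoff—quantified by (iv)—that keeps $\inf_v \Gamma_n(v)/v$ bounded away from $0$ with the stated probability. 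Finally, the $O_p$ and $\Theta_p$ restatements are immediate: (i) gives $\sup_{t\le\tau_n}\widehat{S}_n(t)/S(t)=O_p(1)$, (ii) and (iii) together give the two-sided control $\sup_{t\le\tau_n}Y(t)/(n(1-H(t-)))=\Theta_p(1)$, and (iv) gives $n(1-H(\tau_n))=O_p(1)$, each by letting $\beta\to 0$.
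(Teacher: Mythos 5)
Your proposal is sound, and it is worth noting at the outset that the paper itself gives no proof of this proposition: it simply attributes items (i) and (ii) to \citet{gill1983large}, item (iii) to \citet[Theorem 3.2.1]{gill1980censoring}, and item (iv) to \citet{Yang1994}. What you have done is reconstruct precisely the arguments behind those citations. Your (i) is Gill's supermartingale argument: the Duhamel identity exhibits $\widehat S_n(t)/S(t)$ as a nonnegative local martingale on $(0,\tau_n]$, hence a supermartingale started at $1$, and the maximal inequality $\Prob(\sup \geq \beta^{-1}) \leq \beta$ is exact. One small caveat: your boundedness claim $S(s) \geq S(\tau_n) > 0$ can fail when $H$ has an atom at a point where $S$ vanishes (a terminal atom of $F$ that is observable); the standard fix is to work on $\{t : S(t)>0\}$ and treat the terminal atom separately, which does not disturb the supermartingale property. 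Your (iii) via the probability integral transform and the reverse-martingale structure of $\Gamma_n(v)/v$ (equivalently, Daniels' theorem, which gives $\Prob(\sup_{0<v\leq 1}\Gamma_n(v)/v \geq \lambda) = 1/\lambda$) is exactly Gill's 1980 proof, and your (iv) is Yang's computation, with the atom-robust inequality $\Prob(1-H(X_1)>v)\leq 1-v$ correctly deployed and $(1-u/n)^n \leq e^{-u}$ closing the bound. The only incomplete piece is (ii), where you correctly diagnose that the one-sided maximal inequality cannot control the lower tail and invoke a Wellner-type exponential bound without deriving it; the precise statement needed is Wellner's inequality $\Prob\left(\sup_{U_{(1)}\leq v \leq 1} v/\Gamma_n(v) \geq \lambda\right) \leq e\lambda e^{-\lambda}$ (Shorack and Wellner, Chapter 10), which with $\lambda = \beta^{-1}$ yields exactly the constant $e(1/\beta)e^{-1/\beta}$ in the statement, and your observation that the cutoff $t\leq\tau_n$ (equivalently $v \geq 1-H(\tau_n-) \geq U_{(1)}$ after the transform) is what keeps the infimum positive is the right one. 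Since the paper leaves all four items as citations, leaving this single classical inequality as a named black box puts you at parity with the paper rather than behind it; I would not call it a gap, though a complete write-up should either derive the bound by the Poissonisation argument you sketch or cite it precisely.
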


Items i. and ii. are due to  \citet{gill1983large}. Item iii. is from \cite[Theorem 3.2.1]{gill1980censoring}, and Item iv. is due to \citet{Yang1994}.

Yet another useful result is the so-called Duhamel's Equation.
\begin{proposition}[Prop. 3.2.1 of \citep{gill1980censoring}]\label{Prop:Duhamel}
For all $x>0$ such that $S(x)>0$, 
\begin{align}\label{eqn:Diffduhamel}
d\widehat{F}_n(x) = dF(x)-\left(\int_{(0,x)} \frac{\widehat{S}_n(s-)}{S(s)Y(s)} dM(s)\right)dF(x)+ \frac{\widehat {S}_n(x-)}{Y(x)} dM(x).
\end{align}
\end{proposition}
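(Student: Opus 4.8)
The plan is to reduce the claimed differential identity to the classical \emph{ratio} form of Duhamel's equation, which I would then verify by direct Stieltjes calculus. The conceptual starting point is that both survival functions are product integrals of their cumulative hazards, $S(t)=\prod_{s\le t}(1-d\Lambda(s))$ and $\widehat S_n(t)=\prod_{s\le t}(1-d\widehat\Lambda_n(s))$, where $\widehat\Lambda_n(t)=\int_{(0,t]}dN(s)/Y(s)$ is the Nelson--Aalen estimator. The observation that ties the statement to the martingale $M$ is that $dM(s)/Y(s)=dN(s)/Y(s)-d\Lambda(s)=d(\widehat\Lambda_n-\Lambda)(s)$, so the quantities driving both correction terms in \eqref{eqn:Diffduhamel} are precisely increments of the hazard difference $\widehat\Lambda_n-\Lambda$.

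First I would establish, for every $t$ with $S(t)>0$, the ratio identity
\begin{align*}
\frac{\widehat S_n(t)}{S(t)}-1 = -\int_{(0,t]}\frac{\widehat S_n(s-)}{S(s)}\,d(\widehat\Lambda_n-\Lambda)(s).
\end{align*}
I would prove this by applying the integration-by-parts formula for right-continuous finite-variation functions to $R=\widehat S_n\cdot S^{-1}$, using the relations $dF(s)=-dS(s)=S(s-)d\Lambda(s)$, $d\widehat F_n(s)=-d\widehat S_n(s)=\widehat S_n(s-)d\widehat\Lambda_n(s)$, and $d(S^{-1})(s)=d\Lambda(s)/S(s)$. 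The resulting hazard-driven terms then combine once we invoke the jump relation $S(s)=S(s-)(1-\Delta\Lambda(s))$, which gives $1/S(s-)+\Delta\Lambda(s)/S(s)=1/S(s)$ and collapses the explicit jump-covariation term into the stated form. Equivalently, this ratio identity is the variation-of-constants (Duhamel) equation for product integrals, which is the viewpoint of the cited reference. Evaluating it at the left limit yields the convenient form $\int_{(0,x)}\frac{\widehat S_n(s-)}{S(s)}d(\widehat\Lambda_n-\Lambda)(s)=1-\widehat S_n(x-)/S(x-)$.

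With the ratio identity in hand, \eqref{eqn:Diffduhamel} becomes a one-line substitution. Denoting the inner integral by $G(x)=\int_{(0,x)}\frac{\widehat S_n(s-)}{S(s)Y(s)}dM(s)=1-\widehat S_n(x-)/S(x-)$ and using $dF(x)=S(x-)d\Lambda(x)$, the right-hand side of \eqref{eqn:Diffduhamel} equals
\begin{align*}
&(1-G(x))\,dF(x)+\widehat S_n(x-)\,d(\widehat\Lambda_n-\Lambda)(x)\\
&\qquad = \widehat S_n(x-)\,d\Lambda(x)+\widehat S_n(x-)\big(d\widehat\Lambda_n-d\Lambda\big)(x),
\end{align*}
which simplifies to $\widehat S_n(x-)\,d\widehat\Lambda_n(x)=d\widehat F_n(x)$, as required.

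I expect the main obstacle to be the careful bookkeeping of atoms. Because the paper explicitly allows $F$ and $G$ to be discontinuous and even to share discontinuity points, the integration-by-parts step must retain the jump-covariation sum $\sum_{s\le t}\Delta\widehat S_n(s)\,\Delta(S^{-1})(s)$, and the clean cancellation that produces the denominator $S(s)$ (rather than $S(s-)$) rests precisely on $S(s)=S(s-)(1-\Delta\Lambda(s))$; dropping that term would leave spurious left-limit factors. The hypothesis $S(x)>0$ is exactly what guarantees that every reciprocal $1/S(s)$ above is well defined on the relevant range (since $S$ is nonincreasing, $S(s)\ge S(x)>0$ for $s\le x$), and the remaining manipulations are routine.
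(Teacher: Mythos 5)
Your proof is correct and follows essentially the same route as the source the paper cites for this statement (the paper itself gives no proof, quoting Prop.~3.2.1 of Gill's monograph): the integration-by-parts/product-integral derivation of the ratio identity $\widehat S_n(t)/S(t)-1=-\int_{(0,t]}\widehat S_n(s-)S(s)^{-1}\,d(\widehat\Lambda_n-\Lambda)(s)$, with the jump bookkeeping handled via $S(s)=S(s-)(1-\Delta\Lambda(s))$, is exactly the classical Duhamel argument, and your final substitution recovering $d\widehat F_n(x)=\widehat S_n(x-)\,d\widehat\Lambda_n(x)$ is sound. The only point worth making explicit is that $dM(s)/Y(s)=d(\widehat\Lambda_n-\Lambda)(s)$ requires $Y(s)>0$, so the identity holds as a measure identity on $(0,\tau_n]$, which is precisely the region where the paper applies it given its stated conventions that integrals against $d\widehat F_n$ and $dM$ vanish beyond $\tau_n$.
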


Equation \eqref{eqn:Diffduhamel} allows us to deduce two important results.  Firstly,
\begin{align}\label{eqn:quickDhha}
&\int_{0}^{\tau_n} \phi(x) (d\widehat{F}_n(x)-dF(x))\nonumber\\
&\quad=\int_{0}^{\tau_n} \phi(x)\frac{\widehat {S}_n(x-)}{Y(x)} dM(x)- \int_{0}^{\tau_n}\int_{(0,x)}\phi(x)\frac{\widehat{S}_n(s-)}{S(s)Y(s)}dM(s)dF(x)\nonumber\\
&\quad=\int_{0}^{\tau_n} \frac{\widehat{S}_n(x-)}{Y(x)}\left(\phi(x)-\int_x^{\tau_n}\phi(s)\frac{dF(s)}{S(x)}\right) dM(x)\nonumber\\
&\quad=\int_{0}^{\tau_n} \frac{\widehat{S}_n(x-)}{Y(x)}(\widehat A\phi)(x)dM(x),
\end{align}
where $\widehat A$ is the operator defined in Section~\ref{sec:NotaAsu}. Secondly, for a 2-dimensional kernel $K$, we obtain
\begin{align}
&\int_{0}^{\tau_n}\int_{0}^{\tau_n} K(x,y)(d\widehat{F}_n(x)-dF(x))(d\widehat{F}_n(y)-dF(y))\nonumber\\
&\quad= \int_0^{\tau_n}\int_0^{\tau_n} \frac{\widehat S_n(x-)\widehat S_n(y-)}{Y(x)Y(y)} (\widehat A_2 \widehat A_1 K)(x,y) dM(x)dM(y).\label{eqn:quickDuha2}
\end{align}

\subsection{Some Convergence Theorems}

We state, without proof, the following elementary result that is useful to prove that a sequence of (random) integrals converge to zero in probability.

\begin{lemma}\label{lemma:integralConvergence2}
Let $(\mathcal X, \mathcal B, \mu)$ be a $\sigma$-finite measure space. Let $(R_n(x):x \in \mathcal X)$ be a sequence of stochastic process indexed on $\mathcal X$. We assume that $R_n(\cdot)$ is measurable with respect to $\mathcal B$ (for any fixed realisation of $R_n$).  Suppose that
\begin{enumerate}
\item[i)] For each $x \in \mathcal X$, $R_n(x) \to 0$ almost surely as $n$ tends to infinity, and
\item[ii)] it exists a deterministic non-negative function $R: \mathcal X \to [0,\infty)$ such that
\begin{align*}
\sup_{x \in \mathcal X} \frac{|R_n(x)|}{R(x)}=O_p(1),
\end{align*}
and that $\int_{x \in \mathcal X} R(x)\mu(dx)<\infty$. 
\end{enumerate}
Define the  sequence of random integrals $I_n = \int_{\mathcal X}R_n(x)\mu(dx)$, then  $I_n = o_p(1)$.
\end{lemma}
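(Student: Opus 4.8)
The plan is to convert the in-probability (random) domination in hypothesis (ii) into a genuine deterministic domination by localising onto a high-probability event, and then to invoke the ordinary dominated convergence theorem on the product space $\Omega\times\mathcal X$. I would begin by setting $Z_n=\sup_{x\in\mathcal X}|R_n(x)|/R(x)$, so that hypothesis (ii) reads $Z_n=O_p(1)$, i.e. the sequence $(Z_n)$ is tight. Fixing $\epsilon,\delta>0$, tightness supplies a constant $M=M_\epsilon$ with $\Prob(Z_n>M)<\epsilon/2$ for all $n$. On the event $\{Z_n\le M\}$ one has the deterministic bound $|R_n(x)|\le M R(x)$ simultaneously for all $x$, which is precisely the kind of domination required by the classical DCT.

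Next I would introduce the truncated process $\tilde R_n(x)=R_n(x)\ind_{\{Z_n\le M\}}$. By construction this satisfies $|\tilde R_n(x)|\le M R(x)$ for every $n$ and $x$ almost surely, with $\int_{\mathcal X} M R\,d\mu<\infty$ by (ii); and since $|\tilde R_n(x)|\le|R_n(x)|\to 0$ almost surely for each fixed $x$ by (i), it also converges pointwise, $\tilde R_n(x)\to 0$ a.s. Applying the DCT first in $\omega$ gives $\E|\tilde R_n(x)|\to 0$ for each $x$ (dominated by $M R(x)$), and then applying it again in $x$, where the bound $\E|\tilde R_n(x)|\le M R(x)$ is $\mu$-integrable, yields, after using Tonelli to interchange $\E$ and $\int$, that $\E\int_{\mathcal X}|\tilde R_n|\,d\mu=\int_{\mathcal X}\E|\tilde R_n|\,d\mu\to 0$. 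Hence $\int_{\mathcal X}|\tilde R_n|\,d\mu\to 0$ in $L^1$ and a fortiori in probability.

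Finally I would glue the pieces together. On $\{Z_n\le M\}$ we have $I_n=\int_{\mathcal X}\tilde R_n\,d\mu$, so $|I_n|\,\ind_{\{Z_n\le M\}}\le\int_{\mathcal X}|\tilde R_n|\,d\mu$, and therefore $\Prob(|I_n|>\delta)\le\Prob\big(\int_{\mathcal X}|\tilde R_n|\,d\mu>\delta\big)+\Prob(Z_n>M)$. The second term is below $\epsilon/2$ by the choice of $M$, while the first tends to $0$ by the $L^1$ convergence just established; thus $\Prob(|I_n|>\delta)<\epsilon$ for all large $n$. Since $\epsilon,\delta>0$ were arbitrary, $I_n=o_p(1)$.

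The main obstacle is exactly that the domination in (ii) holds only in probability, not almost surely by a fixed deterministic constant, so the DCT cannot be applied pathwise; the truncation $\ind_{\{Z_n\le M\}}$ is the device that resolves this, trading a deterministic dominating function for a small exceptional event whose probability is controlled uniformly in $n$ by the $O_p(1)$ assumption. A secondary technical point is the measurability needed for Tonelli's theorem: I would assume, as is standard for a stochastic process, that $(\omega,x)\mapsto R_n(\omega,x)$ is jointly measurable, so that $\int_{\mathcal X}|\tilde R_n|\,d\mu$ is a genuine random variable and the order of $\E$ and $\int$ may legitimately be swapped.
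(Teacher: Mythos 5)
Your proof is correct. There is nothing in the paper to compare it against: the authors state this lemma explicitly \emph{without} proof, calling it elementary, so your argument fills a gap the paper deliberately left open, and it does so in the way the authors presumably intended. Every step checks out: the event $\{Z_n\le M\}$ converts the $O_p(1)$ domination into the genuine deterministic bound $|\tilde R_n(x)|\le MR(x)$; the two successive applications of dominated convergence (equivalently, a single application on the product space $\Omega\times\mathcal X$, which is $\sigma$-finite because $\mu$ is and $\Prob$ is finite) give $\E\int_{\mathcal X}|\tilde R_n|\,d\mu\to 0$; and Markov's inequality together with the union bound over $\{Z_n>M\}$ delivers $I_n=o_p(1)$. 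The truncation device is exactly the right way to handle the fact that the domination holds only in probability.

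Two small points of hygiene. First, $Z_n=O_p(1)$ guarantees $\Prob(Z_n>M)<\epsilon/2$ only for all \emph{sufficiently large} $n$, not literally for all $n$ as you wrote; since the conclusion $I_n=o_p(1)$ is asymptotic this costs nothing, but the phrase should read ``for all $n\ge N_\epsilon$'' (obtaining a single $M$ valid for every $n$ would additionally require each individual $Z_n$ to be a.s.\ finite). Second, you correctly flag joint measurability of $(\omega,x)\mapsto R_n(\omega,x)$ as the hypothesis needed for Tonelli; note that a cousin of this issue already lurks in the lemma's statement itself, since measurability of the supremum $Z_n=\sup_{x\in\mathcal X}|R_n(x)|/R(x)$ over an uncountable index set is implicitly presumed by hypothesis (ii). In the paper's applications both assumptions are harmless, as the processes there are built from predictable and c\`adl\`ag factors and are jointly measurable, so your added regularity assumption matches how the lemma is actually used.
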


\subsection{Some Martingale Results}\label{sec:specialMartingales}

 For a given martingale $W$, we denote by $\langle W \rangle$ and $[W]$, respectively, the predictable and quadratic variation processes associated with $W$. It is particularly useful to remember that for counting process martingales $M_i$ and $M$ we have that 
\begin{align*}
d\langle M_i \rangle_t = (1-\Delta \Lambda(y))Y_i(t)d\Lambda(t) \text{ and  }\  d\langle M \rangle_t = (1-\Delta \Lambda(y))Y(t)d\Lambda(t),
\end{align*} 
and note that $1-\Delta \Lambda(y) = S(y)/S(y-)$.

  In our proofs we will constantly use the Lenglart-Rebolledo inequality  \citep[Theorem 3.4.1]{Flemming91}, in particular, we will use the fact that if $W$ is a submartingale depending on the number of observations $n$, with compensator $R$, then the
Lenglart-Rebolledo inequality implies that $\sup_{t\leq T} W(t) \overset{\Prob}{\to} 0$ for any stopping time $T$ such that $R(T) \overset{\Prob}{\to} 0$. Here limits are taken as $n$ approaches infinity. Throughout the proofs, we may not explicitly write the dependence on $n$ when writing a stochastic process, e.g. the martingale $M(t)$ depends on all data points.
 
In this work we will often encounter (sub)martingales with extra parameters, and we will integrate with respect to them. A particular case is stated in the following lemma, whose proof is very simple (and thus omitted).

\begin{lemma}\label{lemma:IntegralOfMartingales}
Let $(\mathcal X, \mathcal B, \mu)$ be a $\sigma$-finite measure space. Consider the stochastic process $(M_y(t): t \geq 0, y \in \mathcal X)$, and assume that 
\begin{enumerate}
\item[i)] For every fixed $y \in \mathcal X$, $M_y(t)$ is a square-integrable $(\mathcal F_t)$-martingale, and
\item[ii)] for every $t\geq 0$, $\E\left(\int_{\mathcal X} M_y(t)^2\mu(dy)\right)<\infty$.
\end{enumerate}
Then, for fixed $B \in \mathcal B$,  the stochastic process $W(t) = \int_B M_y(t)^2d\mu(y)$ is an $(\mathcal F_t)$-submartingale, and its compensator, $R$, is given by $R(t) = \int_B \langle M_y \rangle(t) d\mu(y)$.
\end{lemma}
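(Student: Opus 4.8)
The plan is to lift the two standard one-parameter facts about square-integrable martingales to the $\mu$-integrated process $W$ via a (conditional) Fubini argument. For each fixed $y \in \mathcal X$, assumption i) says $M_y$ is a square-integrable $(\mathcal F_t)$-martingale, so by Jensen's inequality $M_y(t)^2$ is an $(\mathcal F_t)$-submartingale, and by the Doob--Meyer decomposition the compensated process $M_y(t)^2 - \langle M_y\rangle(t)$ is an $(\mathcal F_t)$-martingale, with $\langle M_y\rangle$ predictable, non-decreasing and satisfying $\langle M_y\rangle(0)=0$. These are precisely the two pointwise-in-$y$ relations I intend to integrate over $B$.

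First I would dispose of measurability and integrability. I would record that $W(t) = \int_B M_y(t)^2\, d\mu(y)$ is $\mathcal F_t$-measurable, granting joint measurability of $(y,\omega) \mapsto M_y(t)(\omega)$ in the product $\sigma$-algebra (which holds in the applications, where the $M_y$ are built from the counting-process martingales $M_i$), and that $R(t) = \int_B \langle M_y\rangle(t)\, d\mu(y)$ is adapted, non-decreasing, with $R(0)=0$, and predictable, being a $\mu$-integral of predictable non-decreasing processes. Integrability of $W(t)$ is immediate from assumption ii), since $\E(W(t)) \leq \E(\int_{\mathcal X} M_y(t)^2\, d\mu(y)) < \infty$; and since $\E(\langle M_y\rangle(t)) = \E(M_y(t)^2) - \E(M_y(0)^2) \leq \E(M_y(t)^2)$, Tonelli's theorem gives $\E(R(t)) = \int_B \E(\langle M_y\rangle(t))\, d\mu(y) \leq \int_B \E(M_y(t)^2)\, d\mu(y) < \infty$ as well.

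The core step is the submartingale inequality together with the martingale identity for the compensated process. For $s \leq t$ I would invoke the conditional version of Fubini's theorem to interchange $\E(\cdot \mid \mathcal F_s)$ with the $d\mu$ integral, which is licensed because the integrand is non-negative and $\mu$-integrable in expectation by ii). This yields $\E(W(t)\mid \mathcal F_s) = \int_B \E(M_y(t)^2\mid \mathcal F_s)\, d\mu(y) \geq \int_B M_y(s)^2\, d\mu(y) = W(s)$, proving $W$ is a submartingale. Applying the same interchange to the compensated integrand gives $\E(W(t)-R(t)\mid \mathcal F_s) = \int_B \E(M_y(t)^2-\langle M_y\rangle(t)\mid \mathcal F_s)\, d\mu(y) = \int_B (M_y(s)^2 - \langle M_y\rangle(s))\, d\mu(y) = W(s)-R(s)$, so $W-R$ is a martingale. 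Combined with the predictability, monotonicity and $R(0)=0$ established above, uniqueness in the Doob--Meyer decomposition identifies $R$ as the compensator of $W$.

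The hard part will be the measurability bookkeeping that underpins the two interchanges, which is exactly why the authors flag the proof as simple yet still requiring care: one needs the joint measurability of $(y,\omega)\mapsto M_y(t,\omega)$ so that $W(t)$ and $R(t)$ are genuine random variables, and one needs the conditional Fubini/Tonelli theorem to pull $\E(\cdot\mid\mathcal F_s)$ through the $d\mu$ integral. The non-negativity of both $M_y(t)^2$ and $\langle M_y\rangle(t)$ keeps these interchanges clean (conditional Tonelli), while assumption ii) supplies the finiteness that prevents any $\infty-\infty$ ambiguity when separating $M_y(t)^2 - \langle M_y\rangle(t)$; beyond this, the argument is routine.
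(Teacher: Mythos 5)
Your proof is correct, and since the paper explicitly omits the proof of this lemma as ``very simple,'' your argument---integrating the pointwise-in-$y$ Doob--Meyer decomposition $M_y(t)^2-\langle M_y\rangle(t)$ over $B$, interchanging $\E(\cdot\mid\mathcal F_s)$ with $\int_B\,d\mu$ via conditional Tonelli/Fubini (licensed by assumption ii) and $\E(\langle M_y\rangle(t))\leq\E(M_y(t)^2)$), and then identifying $R$ by uniqueness of the compensator---is precisely the standard argument the authors had in mind. The measurability caveats you flag are genuinely the only delicate points, and they hold in all of the paper's applications, where each $M_y$ is a stochastic integral of a jointly measurable predictable kernel against the counting-process martingale $M$, so that $(y,\omega)\mapsto M_y(t)$ is jointly measurable and $R(t)=\int_B\langle M_y\rangle(t)\,d\mu(y)$ is a $\mu$-mixture of Lebesgue--Stieltjes integrals of predictable processes, hence predictable.
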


Another interesting type of stochastic processes that appear in our proofs are double integrals with respect to martingales. Define the process 
\begin{align*}
W(t) = \int_{C_t} h(x,y)dM(x)dM(y),
\end{align*}
where $C_t = \{(x,y): 0< x <y\leq t\}$, and $C_0 = \emptyset$. The natural questions are whether $W(t)$ defines a proper martingale with respect to $(\mathcal F_t)_{t \geq 0}$ and, if that is the case, what is its predictable variation process (if it exists). We answer these questions below.

\begin{definition}
\label{def:predicDsigmaAlgebra}
Define the predictable $\sigma$- algebra $\mathcal{P}$ as the $\sigma$-algebra generated by the sets of the form
\begin{align*}
(a_1,b_1]\times(a_2,b_2]\times X, \text{ where }0\leq  a_1\leq b_1 < a_2 \leq b_2, X\in \mathcal{F}_{a_2},
\end{align*}
and $\{0\} \times \{0\}\times X$ with $X\in\mathcal{F}_0$.
\end{definition}
Let $C = \{(x,y): 0< x <y<\infty\}$. A process $(h(x,y): (x,y) \in C)$ is called elementary predictable if it can be written as a finite sum of indicator functions of sets belonging to the predictable $\sigma$-algebra $\mathcal{P}$. On the other hand, if a process $h$ is $\mathcal{P}$-measurable then it is the almost sure limit of elementary predictable functions.

Straightforwardly from Definition~\ref{def:predicDsigmaAlgebra} we get the following proposition.
 
\begin{proposition}\label{coll:h predictable}
If $h_1(t)$ and $h_2(t)$ are predictable w.r.t. $(\mathcal F_t)_{t\geq 0}$, then $h(x,y) = h_1(x)h_2(y)$ is $\mathcal P$-measurable. Also, all deterministic functions are $\mathcal P$-measurable.
\end{proposition}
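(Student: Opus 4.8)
The plan is to reduce the statement to how $\mathcal P$ behaves on its generators and then extend by a functional monotone class argument. Recall that a one–dimensional process is predictable with respect to $(\mathcal F_t)_{t\ge 0}$ exactly when it is measurable for the predictable $\sigma$–algebra $\mathcal P_1$ on $[0,\infty)\times\Omega$ generated by the rectangles $(a,b]\times X$ with $X\in\mathcal F_a$, together with $\{0\}\times X$, $X\in\mathcal F_0$. Since $(x,y,\omega)\mapsto h_1(x,\omega)h_2(y,\omega)$ is bilinear in $(h_1,h_2)$, the functional monotone class theorem reduces the problem to the case where $h_1$ and $h_2$ are indicators of generators, say $h_1=\Ind_{(a_1,b_1]}(x)\Ind_{X_1}(\omega)$ with $X_1\in\mathcal F_{a_1}$ and $h_2=\Ind_{(a_2,b_2]}(y)\Ind_{X_2}(\omega)$ with $X_2\in\mathcal F_{a_2}$. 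Restricted to $C=\{0<x<y\}$ the product is the indicator of
\[
E=\bigl\{(x,y)\in C:\ a_1<x\le b_1,\ a_2<y\le b_2\bigr\}\times(X_1\cap X_2),
\]
so the whole proposition comes down to proving $E\in\mathcal P$.

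For the core step I would first dispose of the easy configuration. If $b_1<a_2$ the constraint $x<y$ is automatic on the rectangle, so $E=(a_1,b_1]\times(a_2,b_2]\times(X_1\cap X_2)$; here $a_1\le b_1<a_2$ gives $X_1\in\mathcal F_{a_1}\subseteq\mathcal F_{a_2}$, hence $X_1\cap X_2\in\mathcal F_{a_2}$ and $E$ is literally a generator of $\mathcal P$. The substantive case is when the two time–intervals overlap, and there I would separate $x$ and $y$ by a rational, using $\{x<y\}=\bigcup_{q\in\mathbb Q_{>0}}\{x<q<y\}$, which gives
\[
E=\bigcup_{q\in\mathbb Q_{>0}}\bigl((a_1,b_1]\cap(0,q)\bigr)\times\bigl((a_2,b_2]\cap(q,\infty)\bigr)\times(X_1\cap X_2).
\]
In each term the $x$–interval $(a_1,b_1]\cap(0,q)$ is a countable union of half–open intervals with right endpoint strictly below $q$, while the $y$–interval $(a_2,b_2]\cap(q,\infty)$ is a single half–open interval $(c,b_2]$ with $c=\max(a_2,q)\ge q$; hence every term is a countable union of strict rectangles $(a,b]\times(c,b_2]$ with $b<q\le c$. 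The one point to verify is the filtration level: a nonempty term forces $q>a_1$, and its left endpoint $c$ satisfies $c\ge a_2$ and $c\ge q>a_1$, so $X_1\cap X_2\in\mathcal F_{a_1}\cap\mathcal F_{a_2}\subseteq\mathcal F_c$, exactly as a generator requires. Since a countable union of elements of $\mathcal P$ lies in $\mathcal P$, we conclude $E\in\mathcal P$.

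With the generator case in hand, the monotone class passage gives the claim for all bounded predictable $h_1,h_2$, and replacing each $h_i$ by $(h_i\wedge N)\vee(-N)$ and letting $N\to\infty$ removes boundedness, since $h_1^{(N)}h_2^{(N)}\to h_1h_2$ pointwise and a pointwise limit of $\mathcal P$–measurable functions is $\mathcal P$–measurable. For the final assertion, taking $X_1=X_2=\Omega$ (which belongs to every $\mathcal F_a$) in the core step shows that all half–open rectangles in $C$, and therefore all of $\mathcal B(C)$, are captured by $\mathcal P$; equivalently the projection $(x,y,\omega)\mapsto(x,y)$ is $\mathcal P/\mathcal B(C)$–measurable, so every deterministic Borel function on $C$ is $\mathcal P$–measurable.

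The only genuinely delicate point is the overlapping case: when the time–intervals of $h_1$ and $h_2$ are not strictly separated, $E$ cannot be exhibited directly as a generator, and the diagonal $\{x=y\}$ must be approached through the countable rational decomposition above, with care that every rectangle it produces is conditioned at a filtration level dominating both $a_1$ and $a_2$. The remaining ingredients—the monotone class extension and the endpoint bookkeeping—are entirely routine.
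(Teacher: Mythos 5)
Your argument is correct, and it is worth noting that the paper gives no proof of Proposition~\ref{coll:h predictable} at all: it is simply asserted to follow ``straightforwardly'' from Definition~\ref{def:predicDsigmaAlgebra}. You have correctly isolated the one point that is \emph{not} straightforward, namely that the generators of $\mathcal P$ are strictly separated rectangles ($b_1 < a_2$, with the event conditioned in $\mathcal F_{a_2}$), so when the time supports of $h_1$ and $h_2$ overlap, the product set is not itself a generator; your rational-separation device $\{x<y\}=\bigcup_{q\in\mathbb Q_{>0}}\{x<q<y\}$, together with the bookkeeping that each resulting strict rectangle is conditioned at a level $c$ dominating both $a_1$ and $a_2$, is exactly the missing verification, and the monotone-class and truncation scaffolding around it is standard and sound, as is the deterministic claim via $X_1=X_2=\Omega$ (which correctly reads the paper's loose phrase ``all deterministic functions'' as all Borel ones). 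Two small remarks. First, the line ``$X_1\cap X_2\in\mathcal F_{a_1}\cap\mathcal F_{a_2}\subseteq\mathcal F_c$'' is a notational slip: $X_1\cap X_2$ lies in $\mathcal F_{\max(a_1,a_2)}$, whereas the intersection of the two $\sigma$-algebras is $\mathcal F_{\min(a_1,a_2)}$, which need not contain it; since you have already shown $c\geq a_2$ and $c> a_1$, the correct conclusion $X_1, X_2\in\mathcal F_c$ is immediate and nothing else changes. Second, you can dispense with the bilinear monotone-class passage entirely: running your core computation once, for a single generator $(a,b]\times X$ with $X\in\mathcal F_a$, shows that the coordinate maps $(x,y,\omega)\mapsto(x,\omega)$ and $(x,y,\omega)\mapsto(y,\omega)$ are measurable from $\mathcal P$ into the one-dimensional predictable $\sigma$-algebra; then $h_1(x)h_2(y)$ is a product of two $\mathcal P$-measurable functions, and the unbounded case comes for free. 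This shortcut buys a leaner proof; your two-stage extension buys nothing extra here but is equally valid.
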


\begin{theorem}\label{thm:doubleMartingale}
Let $h$ be a $\mathcal P$-measurable process, and suppose that for all $t \geq 0$ it holds that
\begin{align}\label{eqn:DoubleMcond2}
\E\left(\int_{C_t} |h||dM(x)dM(y)|\right) < \infty.
\end{align}
Then, 
\begin{align*}
W(t) = \int_{C_t} h(x,y)dM(x)dM(y)
\end{align*}
is a martingale on $\R_+$ with respect to the filtration $(\mathcal F_t)_{t\geq 0}$.

Moreover, if 
\begin{equation}\label{eqn:doubleMComcond2}
\E\left(\int_{(0,t]} \left(\int_{(0,y)}|h(x,y)||dM(x)|\right)^2 d \langle M \rangle(y)\right) <\infty,
\end{equation}
then $W(t)$ is a square-integrable $(\mathcal F_t)$-martingale with predictable variation process $\langle W \rangle$ given by
\begin{align}
\langle W \rangle (t) = \int_{(0,t]} \left(\int_{(0,y)} h(x,y)dM(x)\right)^2\frac{Y(y)S(y)}{S(y-)^2}dF(y).\nonumber
\end{align}
\end{theorem}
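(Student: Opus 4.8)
The plan is to build the double integral from the inside out, verifying every claim first on elementary predictable integrands—where the triangular constraint trivialises and everything factorises—and then passing to a general $\mathcal P$-measurable $h$ by approximation. First I would take a single generator of the predictable $\sigma$-algebra, namely $h(x,y)=\ind_X\ind_{(a_1,b_1]}(x)\ind_{(a_2,b_2]}(y)$ with $0\le a_1\le b_1<a_2\le b_2$ and $X\in\mathcal F_{a_2}$. Because $b_1<a_2$, the constraint $x<y$ defining $C_t$ is automatically met on the support of $h$, so the iterated integral factorises into
\[
W(t)=\ind_X\bigl(M(b_1)-M(a_1)\bigr)\bigl(M(b_2\wedge t)-M(a_2\wedge t)\bigr).
\]
Writing $Z=\ind_X(M(b_1)-M(a_1))$, the way Definition~\ref{def:predicDsigmaAlgebra} is set up guarantees that $Z$ is $\mathcal F_{a_2}$-measurable, while $V(t)=M(b_2\wedge t)-M(a_2\wedge t)$ is a martingale vanishing on $[0,a_2]$. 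A short conditioning argument—splitting according to whether $s\ge a_2$ (then $Z\in\mathcal F_s$) or $s<a_2$ (then use the tower property together with $V(a_2)=0$)—shows that $ZV(t)$ is an $(\mathcal F_t)$-martingale. Linearity extends this to every elementary predictable $h$.

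Next, for a general $\mathcal P$-measurable $h$ satisfying \eqref{eqn:DoubleMcond2}, I would approximate $h$ by elementary predictable processes $h_m$ with $\E\int_{C_t}|h-h_m|\,|dM(x)dM(y)|\to 0$; this is possible because, as noted after Definition~\ref{def:predicDsigmaAlgebra}, such $h$ is an almost-sure limit of elementary predictable functions, and \eqref{eqn:DoubleMcond2} supplies the dominating integrability. Then the associated integrals $W_m(t)$ converge to $W(t)$ in $L^1$ for each $t$, and since the martingale property is preserved under $L^1$ limits (conditional expectation being an $L^1$-contraction), $W$ is an $(\mathcal F_t)$-martingale, settling the first assertion.

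For the second assertion I would rewrite the double integral as an iterated one, $W(t)=\int_{(0,t]}U(y)\,dM(y)$ with inner integral $U(y)=\int_{(0,y)}h(x,y)\,dM(x)$. The structural point is that $U$ is a \emph{predictable} process in $y$: for each fixed $y$ the map $x\mapsto h(x,y)$ is predictable and the integration runs over the open interval $(0,y)$, so $U(y)$ is $\mathcal F_{y-}$-measurable, and the $\mathcal P$-measurability of $h$ provides the joint measurability that makes $U$ predictable. Condition~\eqref{eqn:doubleMComcond2} is exactly $\E\int_{(0,t]}\bar U(y)^2\,d\langle M\rangle(y)<\infty$ with $\bar U(y)=\int_{(0,y)}|h|\,|dM(x)|\ge|U(y)|$, whence $\E\int_{(0,t]}U(y)^2\,d\langle M\rangle(y)<\infty$; the one-dimensional theory of stochastic integration against counting-process martingales then makes $W$ a square-integrable martingale with $\langle W\rangle(t)=\int_{(0,t]}U(y)^2\,d\langle M\rangle(y)$. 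Substituting $d\langle M\rangle(y)=(1-\Delta\Lambda(y))Y(y)\,d\Lambda(y)=\frac{S(y)Y(y)}{S(y-)^2}\,dF(y)$ from Section~\ref{sec:specialMartingales} gives the claimed formula.

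The main obstacle is the measurability bookkeeping rather than any hard estimate: making rigorous that $U(y)$ is a genuinely predictable process—so that the outer integral is a bona fide stochastic integral to which the predictable-variation formula applies—and carrying out the elementary approximation while respecting the triangular region $C_t$ (in particular keeping the $b_1<a_2$ separation that is what makes the factorisation and the $\mathcal F_{a_2}$-measurability of $Z$ work). Once the predictable structure of $U$ is secured, both the martingale property and the variation identity reduce to results already available in the paper.
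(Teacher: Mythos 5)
Your proposal is correct and follows essentially the same route as the paper: verify the claim on the generators of $\mathcal P$, where the separation $b_1<a_2$ makes the integral over $C_t$ factorise, extend to general $h$ by dominated convergence using \eqref{eqn:DoubleMcond2}, and obtain the bracket by writing $W(t)=\int_{(0,t]}U(y)\,dM(y)$ with the predictable inner integral $U(y)=\int_{(0,y)}h(x,y)\,dM(x)$ and substituting $d\langle M\rangle(y)=\frac{S(y)Y(y)}{S(y-)^2}\,dF(y)$. The only cosmetic difference is that on generators you establish the martingale property by a direct conditioning computation on the factorised product, whereas the paper observes that the inner integral is adapted and left-continuous (hence predictable, precisely because the integration range is the open interval $(0,y)$ and $X\in\mathcal F_{a_2}$) and then invokes the standard fact that stochastic integrals of predictable processes against $M$ are martingales.
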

The proof of Theorem~\ref{thm:doubleMartingale} is given in \Appendice~\ref{Appendix:DSI}.
\subsection{Forward Operators}\label{sec:ForBackOperators}

\begin{lemma}\label{Lemma:FiniteOpe}
Under Condition~\ref{assu:nondegenerate}, it holds that

\begin{align}
\int_0^{\tau}\int_0^{\tau}  \frac{(A_1K)^2(x,y)S(x)}{1-H(x-)}dF(x)dF(y)<\infty,\label{eqn:FiniteOpe1}
\end{align}
and that
\begin{align}
\int_0^{\tau}\int_0^{\tau}  \frac{(A_1A_2K)^2(x,y)S(x)S(y)}{1-H(x-)S(y-)}dF(x)dF(y)<\infty,\label{eqn:FiniteOpe4}
\end{align}

Under Condition~\ref{assu:degenerate}, it holds that

\begin{align}
\int_0^{\tau}\int_0^{\tau}  \frac{(A_1K)^2(x,y)S(x)}{(1-H(x-))(1-G(y-))}dF(x)dF(y)<\infty,\label{eqn:FiniteOpe2}
\end{align}
and that
\begin{align}\label{eqn:FiniteOpe3}
\int_0^{\tau}\int_0^{\tau}  \frac{(A_1A_2K)^2(x,y)S(x)S(y)}{(1-H(x-))(1-H(y-))}dF(x)dF(y)<\infty.
\end{align}
\end{lemma}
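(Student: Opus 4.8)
The plan is to reduce all four bivariate bounds to two one-dimensional boundedness properties of the forward operator $A$, applied one coordinate at a time and combined through Tonelli's theorem (every integrand is non-negative, so no a priori integrability is needed to exchange orders of integration). The two fundamental inequalities I would use are, for every $g$ in the relevant weighted $L^2$ space,
\begin{align}
\int_0^\tau (Ag)^2(x)\frac{S(x)}{1-H(x-)}dF(x) &\le C_1\int_0^\tau \frac{g(x)^2}{1-G(x-)}dF(x),\label{eqn:planA}\\
\int_0^\tau (Ag)^2(x)\frac{S(x)}{S(x-)}dF(x) &\le C_2\int_0^\tau g(x)^2 dF(x),\label{eqn:planB}
\end{align}
with universal constants $C_1,C_2$. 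Inequality \eqref{eqn:planA} is exactly the Efron--Johnstone bound already invoked to guarantee finiteness of $\sigma^2$; the only point to stress is that it must be read as a bounded-operator estimate, with a constant independent of $g$, so that it can be applied to $g=K(\cdot,y)$ or to $g=(A_2K)(\cdot,y)$ and then integrated over the free coordinate. Inequality \eqref{eqn:planB} is the specialisation of \eqref{eqn:planA} to trivial censoring $G\equiv 0$ (recall that $A$ depends on $F$ only), i.e.\ the boundedness of $A$ on $L^2(F)$; both estimates are instances of a reverse-averaging Hardy inequality for the operator $g\mapsto \frac{1}{S(x)}\int_x^{\tau}g\,dF$, with the monotone weight $(1-G(\cdot-))^{-1}$ and with the constant weight, respectively.

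With these in hand, three of the four bounds are a two-step computation. For \eqref{eqn:FiniteOpe1} I would fix $y$, apply \eqref{eqn:planA} to $g=K(\cdot,y)$ in the first coordinate, and integrate over $y$, landing on $C_1\int_0^\tau\int_0^\tau K^2/(1-G(x-))\,dF\,dF$, finite by Condition~\ref{assu:nondegenerate}.i. For \eqref{eqn:FiniteOpe2} the same first step is followed by multiplication by $(1-G(y-))^{-1}$ before integrating in $y$, giving $C_1\int_0^\tau\int_0^\tau K^2/((1-G(x-))(1-G(y-)))\,dF\,dF$, finite by Condition~\ref{assu:degenerate}.i. For \eqref{eqn:FiniteOpe3} I would apply \eqref{eqn:planA} in the $x$-coordinate to $g=(A_2K)(\cdot,y)$, then multiply by $S(y)/(1-H(y-))$, use Tonelli to free the $y$-integral, and apply \eqref{eqn:planA} once more in the $y$-coordinate to $g=K(x,\cdot)$; the result is $C_1^2\int_0^\tau\int_0^\tau K^2/((1-G(x-))(1-G(y-)))\,dF\,dF$, again finite by Condition~\ref{assu:degenerate}.i.

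The only delicate case is \eqref{eqn:FiniteOpe4}, and it is the one dictating which fundamental inequality is needed. Here the second-coordinate weight is $S(y)/S(y-)$, which is strictly lighter than the variance weight $S(y)/(1-H(y-))=S(y)/((1-G(y-))S(y-))$ by a factor $1-G(y-)$. Applying \eqref{eqn:planA} in the $x$-coordinate to $g=(A_2K)(\cdot,y)$ and then Tonelli leaves $\int_0^\tau (A_2K)^2(x,y)\,S(y)/S(y-)\,dF(y)$ as the inner integral; bounding this again by \eqref{eqn:planA} would reintroduce a factor $(1-G(y-))^{-1}$ and force the stronger Condition~\ref{assu:degenerate}.i, whereas \eqref{eqn:FiniteOpe4} is asserted only under Condition~\ref{assu:nondegenerate}. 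This is exactly where the sharper contraction \eqref{eqn:planB} is indispensable: applying it to $g=K(x,\cdot)$ bounds the inner integral by $C_2\int_0^\tau K(x,y)^2\,dF(y)$, and the final answer is $C_1C_2\int_0^\tau\int_0^\tau K^2/(1-G(x-))\,dF\,dF$, finite by Condition~\ref{assu:nondegenerate}.i.

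I expect the main obstacle to be precisely the justification of \eqref{eqn:planB} with the light weight $S/S(\cdot-)$ --- equivalently, the boundedness of $A$ on $L^2(F)$ with no help from the censoring weight --- since this is what keeps \eqref{eqn:FiniteOpe4} inside the weaker hypothesis. The coordinate-wise reductions and the Tonelli exchanges are then routine, modulo the standard remark that, for $F$-almost every fixed coordinate, the corresponding slice of $K$ lies in the appropriate weighted $L^2$ space, so that $A_1$ and $A_2$ are well defined and commute as noted above.
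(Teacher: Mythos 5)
Your proposal is correct and is essentially the paper's own argument in different packaging: the paper establishes your two one-dimensional weighted bounds by splitting $A=\Id-R$ with $(Rg)(x)=S(x)^{-1}\int_x^\infty g\,dF$, deriving $\E\left[\tfrac{S(X)}{S(X-)}(Rg)(X)^2\right]\le 4\,\E[g(X)^2]$ from the Efron--Johnstone identity $\var(g(X))=\E\left[\tfrac{S(X)}{S(X-)}(Ag)(X)^2\right]$ (which is exactly your light-weight contraction, with $C_2=1$) and absorbing the censoring weight into the kernel via the monotonicity of $1-G(\cdot-)$ (which yields your censored-weight estimate with a universal constant --- it is not literally stated in Efron--Johnstone, who are cited only for finiteness of $\sigma^2$, but it follows in two lines from these ingredients), before running the same coordinate-wise Tonelli reductions you describe. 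In particular, your identification of the light weight $S(y)/S(y-)$ in \eqref{eqn:FiniteOpe4} as the crux, requiring the unweighted $L^2(F)$ contraction in the second coordinate so as to remain within Condition~\ref{assu:nondegenerate}, matches the paper's proof exactly (the paper simply applies that contraction in $y$ first and the censored-weight bound in $x$ second, a cosmetic reordering).
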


The proof of Lemma~\ref{Lemma:FiniteOpe} is given in \Appendice~\ref{apendix:FuturePast}.

\section{Proofs III: Exterior Region}\label{sec:Exterior}
\subsection{Proof of Lemma \ref{lemma:exteriorN}}
In this section we prove Lemma \ref{lemma:exteriorN}. Recall  that $I^{2c} = (E\times E) \cup (E\times I) \cup (I \times E)$, and, by the symmetry of the kernel $K$, $\beta_{I^{2c}}=\beta_{E\times E}+2\beta_{E\times I}$. Then, the result holds by Lemma \ref{lemma:psiR4=0} which states $\beta_{E\times E} = o_p(n^{-1/2})$ under Condition \ref{assu:nondegenerate}, and $\beta_{E \times E} = o_p(n^{-1})$ under Condition \ref{assu:degenerate}, and by Lemmas \ref{lemma:npsi1-2R2=01} and \ref{lemma:psi1-2R2=01}, which state  that $\beta_{E\times I}=o_p(n^{-1/2})$ under Condition \ref{assu:nondegenerate}, and $\beta_{E\times I}=o_p(n^{-1})$ under Condition \ref{assu:nondegenerate}, respectively.

In the rest of Section \ref{sec:Exterior}, we enunciate and prove Lemmas \ref{lemma:psiR4=0}, \ref{lemma:npsi1-2R2=01} and \ref{lemma:psi1-2R2=01}.

\begin{lemma}\label{lemma:psiR4=0}
Under Condition~\ref{assu:nondegenerate}, it holds that
$\beta_{E\times E} = o_p(n^{-1/2})$. Moreover, under Condition~\ref{assu:degenerate}, it holds that $\beta_{E \times E} = o_p(n^{-1})$.

\end{lemma}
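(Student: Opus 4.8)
The plan is to exploit the fact that the Kaplan--Meier estimator is frozen beyond the largest observation, which trivialises the increment $d\widehat F_n$ on the exterior region. Since $\widehat F_n(t)=\widehat F_n(\tau_n)$ for every $t>\tau_n$, the measure $d\widehat F_n$ charges no point of $E=(\tau_n,\tau]$, so that $d(\widehat F_n-F)=-\,dF$ there. Consequently
\begin{align*}
\beta_{E\times E}=\int_E\int_E K(x,y)\,d(\widehat F_n-F)(x)\,d(\widehat F_n-F)(y)=\int_E\int_E K(x,y)\,dF(x)\,dF(y),
\end{align*}
a deterministic integrand integrated over the random shrinking square $E\times E=(\tau_n,\tau]^2$. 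The whole of $\beta_{E\times E}$ is thus a tail object, and the task reduces to quantifying its rate of decay.

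Next I would apply Cauchy--Schwarz with a weight matched to the integrability hypothesis in force. In the non-degenerate case I factor $K=\bigl(K/(1-G(x-))^{1/2}\bigr)(1-G(x-))^{1/2}$ and bound
\begin{align*}
|\beta_{E\times E}|\le\left(\int_E\int_E\frac{K(x,y)^2}{1-G(x-)}\,dF(x)\,dF(y)\right)^{1/2}\left(\int_E\int_E(1-G(x-))\,dF(x)\,dF(y)\right)^{1/2},
\end{align*}
whereas in the degenerate case I use the symmetric weight $((1-G(x-))(1-G(y-)))^{-1}$ so that the first factor is exactly a tail of the integral appearing in Condition~\ref{assu:degenerate}.i. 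In both regimes the first factor is the integral of a non-negative function over $E\times E$ whose integral over all of $(0,\tau]^2$ is finite by Condition~\ref{assu:nondegenerate}.i or~\ref{assu:degenerate}.i; since $\tau_n\uparrow\tau$ almost surely, this tail tends to $0$ almost surely and is therefore $o_p(1)$.

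For the second factor I would use the monotonicity of $G$ to bound $1-G(x-)\le 1-G(\tau_n)$ for $x\in E$, together with $\int_E dF=F(\tau)-F(\tau_n)\le S(\tau_n)$, obtaining
\begin{align*}
\int_E(1-G(x-))\,dF(x)\le(1-G(\tau_n))\,S(\tau_n)=1-H(\tau_n).
\end{align*}
In the degenerate case the second factor equals $\int_E(1-G(x-))\,dF(x)\le 1-H(\tau_n)$, while in the non-degenerate case it is the square root of $\bigl(\int_E(1-G(x-))\,dF(x)\bigr)\bigl(F(\tau)-F(\tau_n)\bigr)\le 1-H(\tau_n)$. By Proposition~\ref{prop:ProbBounds}.iv, $n(1-H(\tau_n))=O_p(1)$, so these factors are $O_p(n^{-1})$ and $O_p(n^{-1/2})$ respectively. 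Multiplying the $o_p(1)$ first factor by the second then yields $|\beta_{E\times E}|\le o_p(1)\cdot O_p(n^{-1/2})=o_p(n^{-1/2})$ under Condition~\ref{assu:nondegenerate} and $|\beta_{E\times E}|\le o_p(1)\cdot O_p(n^{-1})=o_p(n^{-1})$ under Condition~\ref{assu:degenerate}, as claimed.

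The step I expect to be most delicate is establishing that the first (kernel) factor is genuinely $o_p(1)$ at the endpoint $\tau$, since dominated convergence only gives convergence of the tail to the mass that the weighted $F\times F$ measure places on $\bigcap_n(\tau_n,\tau]^2$. I would dispose of this by cases. If $\tau\in I_H$, then $H$ has an atom at $\tau$ and $\tau_n=\tau$ for all large $n$ almost surely, so $E=\emptyset$ and $\beta_{E\times E}=0$ identically. If $\tau\notin I_H$, then $\tau_n<\tau$ with $\tau_n\uparrow\tau$ and $\bigcap_n(\tau_n,\tau]=\{\tau\}$, and $H(\tau-)=1$ forces either $S(\tau-)=0$, whence $F$ has no atom at $\tau$ and the limiting mass is $0$, or $1-G(\tau-)=0$, which is incompatible with the finiteness demanded by Condition~\ref{assu:nondegenerate}.i or~\ref{assu:degenerate}.i unless $K$ vanishes at $\tau$ strongly enough to annul that contribution. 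In every case the limiting tail mass is $0$, so the first factor is $o_p(1)$ and the argument closes.
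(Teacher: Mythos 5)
Your argument is correct and is essentially the paper's own proof: both reduce $\beta_{E\times E}$ to the deterministic tail integral $\int_E\int_E K\,dF\,dF$ (since $d\widehat F_n$ vanishes beyond $\tau_n$), apply Cauchy--Schwarz with the weight dictated by Condition~\ref{assu:nondegenerate}.i or~\ref{assu:degenerate}.i, bound the remaining mass factor by $(1-H(\tau_n))^{1/2}$ resp.\ $1-H(\tau_n)$, and conclude via $n(1-H(\tau_n))=O_p(1)$ (Proposition~\ref{prop:ProbBounds}.\ref{prop:ProbBound4}) together with the tail of the finite condition integral vanishing as $\tau_n\to\tau$; the paper merely inserts the factor $1-G(\tau_n)$ after an unweighted Cauchy--Schwarz, which is cosmetically different from your weighted splitting. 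Your closing endpoint case analysis is sound but superfluous under the paper's conventions: when $\tau\in I_H$ one has $E=\emptyset$ eventually (as you note), and when $\tau\notin I_H$ the region $E=I_H\setminus I$ and the integrals $\int_0^\tau$ already exclude the point $\tau$, so the vague subcase ``unless $K$ vanishes at $\tau$ strongly enough'' never arises.
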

\begin{proof}
First, we prove $\beta_{E \times E}=o_p(n^{-1/2})$ under Condition~\ref{assu:nondegenerate}. Observe that $d\widehat F_n(t)=0$ for all $t>\tau_n$, thus $d(\widehat{F}-F)(x)d(\widehat{F}-F)(y)=dF(x)dF(y)$ in the region ${E \times E}$. Then, by the Cauchy-Schwartz's inequality, it holds
\begin{align}
\beta_{E \times E}=\int_{\tau_n}^\tau\int_{\tau_n}^\tau K(x,y)dF(x)dF(y)
\leq S(\tau_n)\sqrt{\int_{\tau_n}^\tau\int_{\tau_n}^\tau K(x,y)^2dF(x)dF(y)}.\nonumber
\end{align}

Multiplying by $\sqrt{(1-G(\tau_n))/(1-G(\tau_n))}=1$, we get
\begin{align*}
\sqrt{n}\beta_{E \times E}&\leq \sqrt{n\frac{(1-H(\tau_n))S(\tau_n)}{1-G(\tau_n)}\int_{\tau_n}^\tau\int_{\tau_n}^\tau K(x,y)^2dF(x)dF(y)}\nonumber\\
&\leq\sqrt{n(1-H(\tau_n))S(\tau_n)\int_{\tau_n}^\tau\int_{\tau_n}^\tau \frac{K(x,y)^2}{1-G(x-)}dF(x)dF(y)}=o_p(1),
\end{align*}
where the last equality follows from the facts that $n(1-H(\tau_n))=\bigOp(1)$ by Proposition \ref{prop:ProbBounds}.\ref{prop:ProbBound4}, and that the double integral tends to $0$ since $\tau_n \to \tau$ when $n$ tends to infinity, and by Condition \ref{assu:nondegenerate}.

Following the same argument, under Condition~\ref{assu:degenerate}, we get
\begin{align*}
n\beta_{E \times E}\leq n(1-H(\tau_n))\sqrt{\int_{\tau_n}^\tau\int_{\tau_n}^\tau \frac{K(x,y)^2dF(x)dF(y)}{(1-G(x-))(1-G(y-))}}=o_p(1),
\end{align*}
since $n(1-H(\tau_n))=\bigOp(1)$, and since the double integral tends to 0 by Condition~\ref{assu:degenerate}, together with the fact that $\tau_n\to\tau$.
\end{proof}

\begin{lemma}\label{lemma:npsi1-2R2=01}
Under Condition~\ref{assu:degenerate}, it holds that $\beta_{E\times I} = o_p(n^{-1}).$

\end{lemma}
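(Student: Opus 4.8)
The plan is to bound $\beta_{E \times I}$ by controlling the exterior integration in $x$ (over $E = (\tau_n, \tau]$) and the interior integration in $y$ (over $I = (0, \tau_n]$) separately. Recall that $\beta_{E\times I}=\int_{\tau_n}^{\tau}\int_0^{\tau_n} K(x,y)\,d(\widehat F_n-F)(x)\,d(\widehat F_n-F)(y)$. Since $d\widehat F_n$ puts no mass beyond $\tau_n$, the outer differential $d(\widehat F_n - F)(x)$ restricted to $E$ equals $-dF(x)$, so the $x$-integral collapses to $-\int_{\tau_n}^{\tau}(\,\cdot\,)\,dF(x)$. This removes one troublesome random measure and leaves a single Kaplan--Meier-type integral in $y$ against $d(\widehat F_n - F)(y)$ on the interior.

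First I would write $\beta_{E\times I} = -\int_0^{\tau_n} g_n(y)\,d(\widehat F_n - F)(y)$ where $g_n(y) = \int_{\tau_n}^{\tau} K(x,y)\,dF(x)$ is a data-dependent (hence predictable-in-$\tau_n$) envelope. Then I would apply the Duhamel representation from Equation~\eqref{eqn:quickDhha} to rewrite the interior integral as a single martingale integral $\int_0^{\tau_n}\frac{\widehat S_n(x-)}{Y(x)}(\widehat A g_n)(x)\,dM(x)$, or alternatively bound directly using Cauchy--Schwarz in both variables. The cleaner route is Cauchy--Schwarz: splitting $d(\widehat F_n-F)(y)$ and using the representation of $\sqrt n\int g_n\,d(\widehat F_n - F)$ as an $L^2$-bounded martingale integral via Theorem~\ref{Thm: Akritas} / Equation~\eqref{eqn:quickDhha}, I would control $\E\big(\int_0^{\tau}\frac{S(y)}{1-H(y-)}(A g_n)(y)^2\,dF(y)\big)$, which after bounding $(Ag_n)^2$ by the relevant forward-operator quantities reduces to tail integrals of the type appearing in Lemma~\ref{Lemma:FiniteOpe}.

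The crucial mechanism is that the outer factor involves $S(\tau_n)$ or $n(1-H(\tau_n))$, which is $O_p(1)$ by Proposition~\ref{prop:ProbBounds}.\ref{prop:ProbBound4}, while the remaining integral $\int_{\tau_n}^{\tau}\int_0^{\tau_n}\frac{K(x,y)^2}{(1-G(x-))(1-G(y-))}\,dF(x)\,dF(y)$ is the tail of the convergent integral in Condition~\ref{assu:degenerate}.i and therefore tends to $0$ almost surely as $\tau_n \to \tau$. Combining $n(1-H(\tau_n)) = O_p(1)$ with this vanishing tail — exactly mirroring the structure of the proof of Lemma~\ref{lemma:psiR4=0} — yields $n\beta_{E\times I} = o_p(1)$, i.e. $\beta_{E\times I} = o_p(n^{-1})$.

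The main obstacle I anticipate is that, unlike $\beta_{E\times E}$ where \emph{both} differentials collapse to $-dF$, here the interior differential $d(\widehat F_n - F)(y)$ genuinely fluctuates, so a naive Cauchy--Schwarz does not immediately produce the clean deterministic bound. The care needed is to express the interior integral through its martingale form (Equation~\eqref{eqn:quickDhha}), apply the Lenglart--Rebolledo inequality from Section~\ref{sec:specialMartingales} to show its compensator is $o_p(n^{-1})$ after scaling, and to verify that the forward operator $\widehat A$ applied to the data-dependent envelope $g_n$ remains controlled uniformly by a fixed integrable majorant so that Lemma~\ref{lemma:integralConvergence2} applies; keeping track of the $\widehat A$ versus $A$ discrepancy near $\tau_n$ (cf. Equation~\eqref{eqn:diffAes}) is where the bookkeeping is most delicate.
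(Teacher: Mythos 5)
Your skeleton coincides with the paper's own proof: collapse the exterior differential to $-dF$, pass to a martingale representation via Duhamel (Equation~\eqref{eqn:quickDhha}), split $\widehat A = A + (\widehat A - A)$ using Equation~\eqref{eqn:diffAes}, apply Cauchy--Schwarz together with $n(1-H(\tau_n))=O_p(1)$ (Proposition~\ref{prop:ProbBounds}.\ref{prop:ProbBound4}), and close with Lenglart--Rebolledo plus the integrability supplied by Lemma~\ref{Lemma:FiniteOpe}. The genuine gap is measurability. Your envelope $g_n(y)=\int_{\tau_n}^{\tau}K(x,y)\,dF(x)$ depends on $\tau_n=\max_i X_i$, which anticipates the entire sample; ``predictable-in-$\tau_n$'' is not a meaningful notion, and as a process in the interior integration variable $(\widehat A g_n)$ is \emph{not} $(\mathcal F_t)$-predictable. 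Consequently $\int_0^{\tau_n}\frac{\widehat S_n(y-)}{Y(y)}(\widehat A g_n)(y)\,dM(y)$ is not a stochastic integral of a predictable integrand, it has no identifiable compensator, and the Lenglart--Rebolledo step on which your plan rests cannot be launched in this form. The same obstruction recurs one level up: with the random lower limit $\tau_n$, the process $t\mapsto n\int_{\tau_n}^{\tau}M^\star_y(t)^2\,\frac{dF(y)}{1-G(y-)}$ is not even adapted, so it is not a submartingale either.

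The paper's proof is organised precisely around this obstacle, through three devices your sketch omits. First, Fubini keeps the exterior $dF(y)$ integration on the outside, so that for each \emph{fixed} $y$ the interior integral $M^\star_y(t)=\int_0^t\frac{\widehat S_n(x-)}{Y(x)}(A_1K)(x,y)\,dM(x)$ has a deterministic kernel and is a genuine square-integrable $(\mathcal F_t)$-martingale. Second, the $(\widehat A-A)$ remainder is not mere bookkeeping near $\tau_n$: by Equation~\eqref{eqn:diffAes} it factorises exactly as $L(\tau_n)\beta_{E\times E}$, where $L(\tau_n)=\int_0^{\tau_n}\frac{\widehat S_n(x-)}{S(x)Y(x)}\,dM(x)=1-\widehat S_n(\tau_n)/S(\tau_n)=O_p(1)$ by Gill's identity and Proposition~\ref{prop:ProbBounds}.\ref{prop:ProbBound1}, so it is disposed of by Lemma~\ref{lemma:psiR4=0}. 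Third, to obtain a bona fide submartingale to which Lenglart--Rebolledo applies, the random limit $\tau_n$ in the exterior integral is replaced by the deterministic time $T_n=\inf\{t:H(t)\geq 1-C/n\}$ on the event $\{\tau_n\geq T_n\}$, whose probability is at least $1-e^{-C}$ by Proposition~\ref{prop:ProbBounds}.\ref{prop:ProbBound4}; only then is $t\mapsto n\int_{T_n}^{\tau}\frac{M_y^\star(t)^2}{1-G(y-)}\,dF(y)$ an $(\mathcal F_t)$-submartingale (Lemma~\ref{lemma:IntegralOfMartingales}). Your appeal to Lemma~\ref{lemma:integralConvergence2} does not substitute for this device, since the quantities $M^\star_y(\tau_n)^2$ do not converge pointwise to zero on their own --- their smallness is exactly the output of the compensator computation, whose finiteness comes from Equation~\eqref{eqn:FiniteOpe2}; note also that the correct tail object there involves $(A_1K)^2(x,y)\,S(x)/\bigl((1-H(x-))(1-G(y-))\bigr)$, not the plain $K^2$ tail of Condition~\ref{assu:degenerate}.i that you quote.
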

\begin{proof}
We start by noticing that if $\tau$ is a point of discontinuity of $H$ then $\tau_n=\tau$ almost surely for a sufficiently large $n$. Consequently, the set $E\times I$ is empty and thus  the statement above holds trivially. Therefore, we assume that $\tau$ is a continuity point of $H$.

Replacing Equation~\eqref{eqn:quickDhha} in $\beta_{E\times I}$ yields
\begin{align}
|n\beta_{E\times I}|
&=\left| n\int_{\tau_n}^\tau \int_{0}^{\tau_n} \frac{\widehat S_n(x-)}{Y(x)} \left(\widehat A_1 K\right)(x,y) dM(x) dF(y)\right|\nonumber\\
&=\left|n\int_{\tau_n}^\tau \int_{0}^{\tau_n} \frac{\widehat S_n(x-)}{Y(x)} \left(\left(\widehat A_1-A_1 +A_1\right)K\right)(x,y) dM(x) dF(y)\right|.\nonumber
\end{align} 
Recall that Equation~\eqref{eqn:diffAes} states that for $x\leq \tau_n$, 
\begin{align}
\left(\left(\widehat A_1- A_1\right)K\right)(x,y)&= \frac{1}{S(x)}\int_{\tau_n}^{\tau} K(s,y)dF(s),\nonumber
\end{align}
then
\begin{align}
&n\int_{\tau_n}^\tau \int_0^{\tau_n}\frac{\widehat S_n(x-)}{Y(x)} \left(\left(\widehat A_1-A_1\right)K\right)(x,y) dM(x) dF(y)\nonumber\\
&\quad= n\int_{\tau_n}^\tau  \int_{0}^{\tau_n}\frac{\widehat S_n(x-)}{S(x)Y(x)} \int_{\tau_n}^{\tau}K(s,y)dF(s) dM(x) dF(y)\nonumber\\
&\quad=L(\tau_n)\left(n \int_{\tau_n}^\tau  \int_{\tau_n}^{\tau}K(s,y)dF(s)  dF(y)\right)
=L(\tau_n) \beta_{E\times E} = o_p(1)\label{eqn:randomveiei394811},
\end{align}
where we define $L(\tau_n)=\int_{0}^{\tau_n}\frac{\widehat S_n(x-)}{S(x)Y(x)}dM(x)$. To verify the last equality, we use  Lemma~\ref{lemma:psiR4=0},
and the fact that $L(\tau_n) = O_p(1)$, which follows from Lemma 2.4 of \citet{gill1983large} that states that $L(\tau_n) = 1- \widehat S_n(\tau_n)/S(\tau_n)$, and then by Proposition~\ref{prop:ProbBounds}.\ref{prop:ProbBound1}, we get $L(\tau_n) = O_p(1)$. 

Therefore, from Equation \eqref{eqn:randomveiei394811}, we deduce
\begin{align}
|n\beta_{E\times I}| &= \left|n\int_{\tau_n}^\tau\int_0^{\tau_n}
\frac{\widehat S_n(x-)}{Y(x)}(A_1K)(x,y)dM(x)dF(y)\right|+o_p(1)\label{eqn:Psi12-Psi22recap0}\\
&= \left|n\int_{\tau_n}^\tau M^\star_y(\tau_n)dF(y)\right|+o_p(1),\nonumber
\end{align}
where 
\begin{align*}
M^\star_y(t) = \int_0^t \frac{\widehat S_n(x-)}{Y(x)}(A_1K)(x,y)dM(x).
\end{align*}
We proceed to show that $n\int_{\tau_n}^\tau M^\star_y(\tau_n)dF(y)=o_p(1)$ which implies that  $n\beta_{E\times I} =o_p(1)$. 

Notice that for any fixed $y\in\R_+$, $M_y^\star(t)$ is a square-integrable $(\mathcal{F}_t)$-martingale. By applying the Cauchy-Schwartz's inequality, we obtain
\begin{align}
\left|n\int_{\tau_n}^\tau M^\star_y(\tau_n)dF(y)\right|
&\leq n^{1/2}S(\tau_n)^{1/2}\left(n\int_{\tau_n}^\tau M^\star_y(\tau_n)^2dF(y)\right)^{1/2}\nonumber\\
&\leq n^{1/2}\frac{(1-H(\tau_n))^{1/2}}{(1-G(\tau_n))^{1/2}}\left(n\int_{\tau_n}^\tau M^\star_y(\tau_n)^2dF(y)\right)^{1/2}\nonumber\\
&\leq n^{1/2}(1-H(\tau_n))^{1/2}\left(n\int_{\tau_n}^\tau \frac{M^\star_y(\tau_n)^2}{1-G(y-)}dF(y)\right)^{1/2}\nonumber\\
&= O_p(1)\left(n\int_{\tau_n}^\tau \frac{M^\star_y(\tau_n)^2}{1-G(y-)}dF(y)\right)^{1/2},\nonumber
\end{align}
where the  last equality follows from Proposition \ref{prop:ProbBounds}.\ref{prop:ProbBound4}. We proceed to prove that
\begin{align}
n\int_{\tau_n}^\tau \frac{M^\star_y(\tau_n)^2}{1-G(y-)}dF(y)=o_p(1).\nonumber
\end{align}
Notice that the previous equation considers random integration limits. Our first step will be to prove that $\tau_n$ can be replaced by a deterministic value, say $T_n$, without affecting the result we wish to prove. 

Let $C>0$ be a large constant, define $T_n = \inf\{t>0: H(t)\geq1-C/n\}$ and the event $B_n= \{1-C/n \leq H(\tau_n)\}$. By Proposition~\ref{prop:ProbBounds}.\ref{prop:ProbBound4}, it holds $\Prob(B_n) \geq 1-e^{-C}$ and, by the definition of $T_n$, we have that $\{\tau_n \geq T_n\} \subseteq B_n$. Since $\lim_{C \to \infty} \Prob(B_n)=1$, it is enough to prove that 
\begin{align*}
n\int_{T_n}^\tau \frac{M^\star_y(\tau_n)^2}{1-G(y-)}dF(y)=o_p(1).
\end{align*}

Observe that by Lemma \ref{lemma:IntegralOfMartingales}, the process
\begin{align}
t \to n\int_{T_n}^\tau\frac{M_y^\star(t)^2}{1-G(y-)}dF(y)\nonumber
\end{align}
is an $(\mathcal F_t)$-submartingale with compensator, evaluated at $t=\tau_n$, given by
\begin{align}
n\int_{T_n}^\tau\frac{\langle M_y^\star\rangle(\tau_n)}{1-G(y-)}dF(y)
&= n\int_{T_n}^\tau\int_0^{\tau_n}
(A_1K)^2(x,y) \frac{\widehat S_n(x-)^2S(x)}{Y(x)S(x-)^2}\frac{dF(x)dF(y)}{(1-G(y-))}\nonumber\\
&={O}_p(1)
\int_{T_n}^\tau\int_0^{\tau_n}
(A_1K)^2(x,y)\frac{S(x)}{(1-H(x-))}\frac{dF(x)dF(y)}{(1-G(y-))}\nonumber\\
&=o_p(1),\nonumber
\end{align}
where the second equality is due to Propositions~\ref{prop:ProbBounds}.\ref{prop:ProbBound1} and \ref{prop:ProbBounds}.\ref{prop:ProbBound2}, and the third equality holds by noticing that $T_n \to \tau$ and that
\begin{align}
&\int_{0}^\tau\int_0^{\tau}
(A_1K)^2(x,y)\frac{S(x)}{(1-H(x-))}\frac{dF(x)dF(y)}{(1-G(y-))}<\infty,\nonumber
\end{align}
by Equation~\eqref{eqn:FiniteOpe2} of Lemma~\ref{Lemma:FiniteOpe} under Condition \ref{assu:degenerate}. We conclude then that 
\begin{align*}
n\int_{T_n}^\tau\frac{\langle M_y^\star\rangle(\tau_n)}{1-G(y-)}dF(y)=o_p(1),
\end{align*}
 which, by the  Lenglart-Rebolledo inequality, implies 
\begin{align*}
n\int_{T_n}^\tau\frac{M_y^\star(\tau_n)^2}{1-G(y-)}dF(y)=o_p(1).
\end{align*} 
Since the previous result is valid in the event $B_n$, which can be chosen with arbitrarily large probability, we conclude
\begin{align*}
n\beta_{E\times I}&=O_p(1)\left(n\int_{\tau_n}^\tau \frac{M^\star_y(\tau_n)^2}{1-G(y-)}dF(y)\right)^{1/2}+o_p(1)=o_p(1),
\end{align*}
finishing our proof.
\end{proof}

\begin{lemma}\label{lemma:psi1-2R2=01}
Under Assumption~\ref{assu:nondegenerate} it holds that
$\beta_{E\times I} = o_p(n^{-1/2}).$
\end{lemma}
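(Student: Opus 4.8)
The plan is to follow the proof of Lemma~\ref{lemma:npsi1-2R2=01} almost verbatim, replacing the scaling factor $n$ by $\sqrt{n}$ and weakening the integrability input accordingly. As before I would first dispose of the case in which $\tau$ is an atom of $H$: there $\tau_n=\tau$ a.s. for large $n$, so $E\times I=\emptyset$ and the claim is trivial; hence assume $\tau$ is a continuity point of $H$. Applying the Duhamel identity \eqref{eqn:quickDhha} to the interior variable gives $\sqrt{n}\beta_{E\times I}=\sqrt{n}\int_{\tau_n}^\tau\int_0^{\tau_n}\frac{\widehat S_n(x-)}{Y(x)}(\widehat A_1 K)(x,y)\,dM(x)\,dF(y)$, and I would split $\widehat A_1=(\widehat A_1-A_1)+A_1$.

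For the $(\widehat A_1-A_1)$ contribution, Equation~\eqref{eqn:diffAes} factorises it exactly as in \eqref{eqn:randomveiei394811} into $L(\tau_n)\,\sqrt{n}\beta_{E\times E}$, where $L(\tau_n)=\int_0^{\tau_n}\frac{\widehat S_n(x-)}{S(x)Y(x)}\,dM(x)=1-\widehat S_n(\tau_n)/S(\tau_n)=O_p(1)$ by Proposition~\ref{prop:ProbBounds}.\ref{prop:ProbBound1}. The only change from the degenerate proof is that I now invoke the non-degenerate clause of Lemma~\ref{lemma:psiR4=0}, namely $\beta_{E\times E}=o_p(n^{-1/2})$, so that $\sqrt{n}\beta_{E\times E}=o_p(1)$ and the whole term is $O_p(1)\,o_p(1)=o_p(1)$. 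For the $A_1$ contribution I reuse the martingale $M^\star_y(t)=\int_0^t\frac{\widehat S_n(x-)}{Y(x)}(A_1K)(x,y)\,dM(x)$ and, by Cauchy--Schwarz together with the crude bound $\int_{\tau_n}^\tau dF\le 1$, reduce the task to showing $n\int_{\tau_n}^\tau M^\star_y(\tau_n)^2\,dF(y)=o_p(1)$. I then replace the random limit $\tau_n$ by the deterministic $T_n=\inf\{t:H(t)\ge 1-C/n\}$ on the event $B_n=\{\tau_n\ge T_n\}$, which has probability at least $1-e^{-C}$ by Proposition~\ref{prop:ProbBounds}.\ref{prop:ProbBound4}, so that it suffices to treat $n\int_{T_n}^\tau M^\star_y(\tau_n)^2\,dF(y)$.

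The key step is to bound the compensator of the submartingale $t\mapsto n\int_{T_n}^\tau M^\star_y(t)^2\,dF(y)$ (Lemma~\ref{lemma:IntegralOfMartingales}) and apply Lenglart--Rebolledo. Computing $\langle M^\star_y\rangle$ via $d\langle M\rangle(x)=S(x)Y(x)S(x-)^{-2}\,dF(x)$ and then using Propositions~\ref{prop:ProbBounds}.\ref{prop:ProbBound1}--\ref{prop:ProbBound2} (so that $\widehat S_n(x-)^2/S(x-)^2=O_p(1)$ and $n/Y(x)=O_p(1/(1-H(x-)))$) gives the compensator at $\tau_n$ as $O_p(1)\int_{T_n}^\tau\int_0^{\tau_n}(A_1K)^2(x,y)\frac{S(x)}{1-H(x-)}\,dF(x)\,dF(y)$. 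Here lies the one genuine deviation from the degenerate argument, and the point I expect to be most delicate: the degenerate proof carried an extra weight $1/(1-G(y-))$ and controlled the analogous integral through Equation~\eqref{eqn:FiniteOpe2} of Lemma~\ref{Lemma:FiniteOpe}, which is available only under Condition~\ref{assu:degenerate}. Under the weaker Condition~\ref{assu:nondegenerate} that weighted integral may diverge (note $(1-G(\tau_n))^{-1}$ can blow up as $\tau_n\to\tau$), so I must never introduce the factor $1/(1-G(y-))$; the $\sqrt{n}$ scaling is precisely what makes this affordable, since bounding $\int_{\tau_n}^\tau dF\le 1$ and absorbing the surplus $n$ into $n/Y(x)$ already yields an \emph{unweighted} integral. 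That integral is finite by Equation~\eqref{eqn:FiniteOpe1}, which does hold under Condition~\ref{assu:nondegenerate}, and since $T_n\to\tau$ its tail vanishes, forcing the compensator to $o_p(1)$; Lenglart--Rebolledo then closes the argument and gives $\sqrt{n}\beta_{E\times I}=o_p(1)$.
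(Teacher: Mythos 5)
Your proposal is correct and follows the paper's own proof essentially verbatim: the same reduction of $\beta_{E\times I}$ via Duhamel and the splitting $\widehat A_1=(\widehat A_1-A_1)+A_1$ (with $L(\tau_n)=O_p(1)$ and the non-degenerate clause of Lemma~\ref{lemma:psiR4=0}), the same deterministic truncation $T_n$, Cauchy--Schwarz with the crude bound $\int_{T_n}^{\tau}dF\leq 1$, and the Lenglart--Rebolledo argument with the compensator controlled by Equation~\eqref{eqn:FiniteOpe1} of Lemma~\ref{Lemma:FiniteOpe}. Your remark on the one genuine deviation from the degenerate case --- that the $\sqrt{n}$ scaling makes the extra weight $1/(1-G(y-))$ unnecessary, so only the unweighted integral, which is finite under Condition~\ref{assu:nondegenerate}, is needed --- is precisely the point the paper's argument exploits.
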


\begin{proof}
Following the same steps of the proof of Lemma~\ref{lemma:npsi1-2R2=01}, it holds
\begin{align}
n^{1/2}\beta_{E\times I} &=n^{1/2}\int_{\tau_n}^\tau\int_0^{\tau_n}
\frac{\widehat S_n(x)}{Y(x)}(A_1K)(x,y)dM(x)dF(y)+o_p(1)\nonumber\\
&= n^{1/2}\int_{\tau_n}^\tau M^\star_y(\tau_n)dF(y)+o_p(1).\nonumber
\end{align}
where $M^\star_y(t)=\int_0^t\frac{\widehat S_n(x-)}{Y(x)}(A_1K)(x,y)dM(x)$ is an $(\mathcal{F}_t)$-martingale for every fixed $y\in\R_+$. Let $T_n$ be the same deterministic sequence used in the proof of Lemma~\ref{lemma:npsi1-2R2=01}. Then, it suffices to show that
\begin{align}
n^{1/2}\int_{T_n}^\tau M_y^\star(\tau_n)dF(y)=o_p(1).\nonumber
\end{align}

By the Cauchy-Schwartz's inequality 
\begin{align}
|n^{1/2}\beta_{E\times I}|&\leq n^{1/2}\int_{T_n}^\tau M_y^\star(\tau_n)dF(y)+o_p(1)\nonumber\\
&\leq\left(n\int_{T_n}^\tau M_y^\star(\tau_n)^2dF(y)\right)^{1/2}+o_p(1).\label{eqn:ExICauchy}
\end{align}

Moreover, notice that by Lemma \ref{lemma:IntegralOfMartingales}, $n\int_{T_n}^\tau M_y^\star(t)^2dF(y)$ is an $(\mathcal{F}_t)$-submartingale with compensator, evaluated at $t=\tau_n$, given by
\begin{align}
n\int_{T_n}^\tau \langle M_y^\star\rangle(\tau_n)dF(y)&=\int_{T_n}^\tau \int_0^{\tau_n}\frac{\widehat{S}(x-)^2}{Y(x)/n}(A_1K)^2(x,y)\frac{S(x)}{S(x-)^2}dF(x)dF(y)\nonumber\\
&={O}_p(1)\int_{T_n}^\tau \int_0^{\tau_n}(A_1K)^2(x,y) \frac{S(x)}{1-H(x-)}dF(x)dF(y),\nonumber
\end{align}
where the second equality holds by Propositions~\ref{prop:ProbBounds}.\ref{prop:ProbBound1} and \ref{prop:ProbBounds}.\ref{prop:ProbBound2}. We prove that the compensator in the previous equation converges to zero by noticing that $T_n\to\tau$, and that
\begin{align*}
&\int_{0}^\tau \int_0^{\tau}(A_1K)^2(x,y) \frac{S(x)}{1-H(x-)}dF(x)dF(y)<\infty,
\end{align*}
which holds due to Equation~\eqref{eqn:FiniteOpe1} of Lemma~\ref{Lemma:FiniteOpe} under Condition \ref{assu:nondegenerate}. 

The previous result implies that $n\int_{T_n}^\tau \langle M_y^\star\rangle(\tau_n)dF(y)=o_p(1)$. By the Lenglart-Rebolledo inequality, we deduce $n\int_{T_n}^\tau M_y^\star(\tau_n)^2dF(y)=o_p(1),$ and by substituting this result in Equation~\eqref{eqn:ExICauchy} we get  $n^{1/2}\beta_{E\times I}=o_p(1)$.

\end{proof}

\section{Proofs IV: Interior Region}\label{sec:InteriorSec}
\subsection{Proof of Lemma \ref{lemma:nKMintoMartingale}}
\begin{proof}
We start by proving Equation \eqref{eqn:MartingaleRepren} under Condition~\ref{assu:degenerate}. Observe that, by Equation~\eqref{eqn:quickDuha2}, it holds that
\begin{align}
\beta_{I^2} =  \int_0^{\tau_n}\int_0^{\tau_n} \frac{\widehat S_n(x-)\widehat S_n(y-)}{Y(x)Y(y)} (\widehat A_2 \widehat A_1 K)(x,y) dM(x)dM(y).\nonumber
\end{align}
We proceed to prove that $\widehat{A}_1$ and $\widehat{A}_2$ can be replaced in the previous equation by the operators $A_1$ and $A_2$, respectively.  After that, Equation~\eqref{eqn:MartingaleRepren} follows immediately by recalling that $K'(x,y)=(A_1A_2K)(x,y)$.

We begin with the following equality,
\begin{align*}
\widehat{A}_2\widehat{A}_1-A_2A_1=(\widehat{A}_2-A_2)(\widehat{A}_1-A_1)+A_2(\widehat{A}_1-A_1)+A_1(\widehat{A}_2-A_2),
\end{align*}
then, by the symmetry of $K$,  we just need to prove that
\begin{align}
n\int_0^{\tau_n} \int_0^{\tau_n} \frac{\widehat S_n(x-) \widehat S_n(y-)}{Y(x)Y(y)}(A_1(\widehat{A}_2-A_2)K)(x,y)dM(x)dM(y)=o_p(1),\label{eqn:L5.41}
\end{align}
and
\begin{align}
n\int_0^{\tau_n} \int_0^{\tau_n} \frac{\widehat S_n(x-) \widehat S_n(y-)}{Y(x)Y(y)}((\widehat{A}_2-A_2)(\widehat{A}_1-A_1)K)(x,y)dM(x)dM(y)=o_p(1).\label{eqn:L5.42}
\end{align}
We begin by proving Equation \eqref{eqn:L5.41}. From Equation~\eqref{eqn:diffAes} we get
\begin{align}\label{eqn:randomv949j2bxcngs}
(A_1(\widehat{A}_2-A_2)K)(x,y)&=\frac{1}{S(y)}\int_{\tau_n}^\tau (A_1K)(x,s)dF(s).
\end{align}
Let $L(\tau_n)=\int_{0}^{\tau_n}\frac{\widehat{S}(x-)}{S(x)}\frac{d M(x)}{Y(x)}$, then substituting Equation~\eqref{eqn:randomv949j2bxcngs} in Equation \eqref{eqn:L5.41} yields
\begin{align*}
&n\int_0^{\tau_n} \int_0^{\tau_n} \frac{\widehat S_n(x-) \widehat S_n(y-)}{Y(x)Y(y)}(A_1(\widehat{A}_2-A_2)K)(x,y)dM(x)dM(y)\\
&\quad=n \int_0^{\tau_n} \int_0^{\tau_n} \frac{\widehat S_n(x-) \widehat S_n(y-)}{Y(x)Y(y)}\left(\frac{1}{S(y)}\int_{\tau_n}^{\tau}(A_1K)(x,s)dF(s)\right) dM(x)dM(y)\nonumber\\
&\quad= nL(\tau_n)\int_0^{\tau_n} \frac{ \widehat S_n(x-)}{Y(x)}\int_{\tau_n}^{\tau}(A_1K)(x,s)dF(s)dM(x)\nonumber\\
&\quad= O_p(1)n\left|\int_{\tau_n}^{\tau}\int_0^{\tau_n} \frac{ \widehat S_n(x-)}{Y(x)}(A_1K)(x,s)dM(x)dF(s)\right|=n|\beta_{E \times I}| + o_p(1),
\end{align*}\label{eqn:random19cifnalasd}
where the third equality holds as $L(\tau_n)=O_p(1)$ (which is proved in Lemma~\ref{lemma:npsi1-2R2=01}),  and the last equality is exactly Equation \eqref{eqn:Psi12-Psi22recap0}. Hence, by Lemma \ref{lemma:npsi1-2R2=01}, we deduce that Equation \eqref{eqn:L5.41} holds true.

For Equation \eqref{eqn:L5.42}, a similar computation yields 
\begin{align*}
&n\int_0^{\tau_n} \int_0^{\tau_n} \frac{\widehat S_n(x-) \widehat S_n(y-)}{Y(x)Y(y)}((\widehat{A}_2-A_2)(\widehat{A}_1-A_1)K)(x,y)dM(x)dM(y)\\
&\quad=nL(\tau_n)^2 
\int_{\tau_n}^{\infty}\int_{\tau_n}^{\infty}
K(s,t)dF(s)dF(t)\\
&\quad=O_p(1)n 
\left|\int_{\tau_n}^{\infty}\int_{\tau_n}^{\infty}
K(s,t)dF(s)dF(t)\right|= O_p(1)n|\beta_{E\times E}| = o_p(1)
\end{align*}
where the last equality holds by Lemma~\ref{lemma:psiR4=0}.

To finish the proof of Lemma~\ref{lemma:nKMintoMartingale}  we need to check Equation~\eqref{eqn:MartingaleRepresqrtn} holds under Condition~\ref{assu:nondegenerate}, which follows from repeating the same steps but replacing the scaling factor $n$ by $\sqrt{n}$. 
\end{proof}

\section{Proofs V: Double Stochastic Integral}\label{sec:Interior}

In this section we prove Lemma \ref{lemma:sqrtDMarting=0} and Theorem \ref{lemma:afterMartingalenb}. To begin with, from Lemmas \ref{lemma:exteriorN} and \ref{lemma:nKMintoMartingale}, we deduce that
\begin{eqnarray}\label{eqn:random1kjfjcz}
\beta = \int_{0}^{\tau_n}\int_{0}^{\tau_n} \frac{\widehat S_n(x-) \widehat S_n(y-)}{Y(x)Y(y)} K'(x,y) dM(x)dM(y)+o_p(n^{-c})
\end{eqnarray}
holds for $c = 1/2$ under Condition~\ref{assu:nondegenerate}, and for $c = 1$ under Condition~\ref{assu:degenerate}. The form of $\beta$ suggests that we  need to study the double stochastic integral process given by 
\begin{align}
Q(t) = \int_{0}^{t}\int_{0}^{t} \frac{\widehat S_n(x-) \widehat S_n(y-)}{Y(x)Y(y)} K'(x,y) dM(x)dM(y).\nonumber
\end{align}
 The strategy to study $Q(t)$ is to consider its decomposition into a diagonal and an off-diagonal term, and to analyse them individually. To this end, we define the sets $D(t) = \{(x,y): x=y, 0<x\leq t\}$ and $C(t) = \{(x,y): 0<x<y\leq t\}$, and define the processes
\begin{align}
Q_D(t) &= \int_{D(t)}\frac{\widehat S_n(x-) \widehat S_n(y-)}{Y(x)Y(y)} K'(x,y) dM(x)dM(y),\nonumber
\end{align}
and
\begin{align} 
Q_C(t) &= \int_{C(t)}\frac{\widehat S_n(x-) \widehat S_n(y-)}{Y(x)Y(y)} K'(x,y) dM(x)dM(y).\nonumber
\end{align}
Notice that $Q(t) = Q_D(t)+2Q_C(t)$ follows by the symmetry of $K'=(A_1A_2K)$.

The proofs of Lemma \ref{lemma:sqrtDMarting=0} and Theorem \ref{lemma:afterMartingalenb} are an immediate consequence of the following results concerning the process $Q(t)$.
\begin{lemma}\label{lemma:sqrtnQD}
Under Condition~\ref{assu:nondegenerate} it holds that $\sqrt{n}Q_D(\tau_n) \overset{\Prob}{\to} 0$.
\end{lemma}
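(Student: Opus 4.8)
The plan is to recognise the diagonal double integral as a single stochastic integral against the quadratic variation of $M$ and then control it through its compensator. Since the region $D(\tau_n)$ forces $x=y$, the product measure $dM(x)\,dM(y)$ restricted to the diagonal collapses to $d[M](x)$, so that
\begin{align*}
Q_D(\tau_n) = \int_0^{\tau_n} \frac{\widehat{S}_n(x-)^2}{Y(x)^2}\, K'(x,x)\, d[M](x).
\end{align*}
Because $d[M]\ge 0$ and the prefactor is nonnegative, $|\sqrt{n}\,Q_D(\tau_n)|$ is dominated by the increasing, adapted process $P(t)=\sqrt{n}\int_0^{t}\frac{\widehat{S}_n(x-)^2}{Y(x)^2}|K'(x,x)|\,d[M](x)$ evaluated at $t=\tau_n$. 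The integrand here is predictable (both $\widehat{S}_n(x-)$ and $Y(x)$ are left-continuous and $K'$ is deterministic), so the compensator $\widehat{P}$ of $P$ is obtained by replacing $d[M]$ with $d\langle M\rangle$. I would then invoke the Lenglart-Rebolledo inequality exactly as stated in Section~\ref{sec:specialMartingales}: it suffices to prove that $\widehat{P}(\tau_n)\overset{\Prob}{\to}0$, since this forces $\sup_{t\le\tau_n}P(t)=P(\tau_n)\overset{\Prob}{\to}0$ and hence $\sqrt{n}Q_D(\tau_n)\overset{\Prob}{\to}0$.

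Next I would make the compensator explicit. Using $d\langle M\rangle(x)=(1-\Delta\Lambda(x))Y(x)\,d\Lambda(x)$, the factor $Y(x)^{-2}d\langle M\rangle(x)$ becomes $(1-\Delta\Lambda(x))Y(x)^{-1}d\Lambda(x)$, and with $1-\Delta\Lambda(x)=S(x)/S(x-)$ and $d\Lambda(x)=dF(x)/S(x-)$ this yields
\begin{align*}
\widehat{P}(\tau_n) = \int_0^{\tau} R_n(x)\, dF(x), \qquad R_n(x)=\sqrt{n}\,\frac{\widehat{S}_n(x-)^2}{Y(x)}\,\frac{S(x)}{S(x-)^2}\,|K'(x,x)|\,\ind_{\{x\le\tau_n\}}.
\end{align*}
The goal is to show $\int_0^{\tau}R_n\,dF = o_p(1)$, which I would obtain through Lemma~\ref{lemma:integralConvergence2} with $\mu=F$. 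The pointwise hypothesis is immediate: for fixed $x<\tau$, Proposition~\ref{lemma:supConverSH} gives $\widehat{S}_n(x-)\to S(x-)$ and $Y(x)/n\to 1-H(x-)>0$ almost surely, so $R_n(x)$ is of order $n^{-1/2}$ and tends to $0$.

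The hard part is exhibiting an $F$-integrable dominating function, and this is exactly where the square-root weight of Condition~\ref{assu:nondegenerate}.ii must appear. Bounding $\widehat{S}_n(x-)\le\beta^{-1}S(x-)$ and $S(x)\le S(x-)$ reduces the task to controlling $\sqrt{n}\,S(x-)/Y(x)$, which I would rewrite, using $S(x-)^2/(1-H(x-))=S(x-)/(1-G(x-))$, as
\begin{align*}
\frac{\sqrt{n}\,S(x-)}{Y(x)} = \frac{\sqrt{n}\,(1-H(x-))^{1/2}}{Y(x)}\left(\frac{S(x-)}{1-G(x-)}\right)^{1/2}.
\end{align*}
Thus $R_n(x)\le O_p(1)\,\bigl(S(x-)/(1-G(x-))\bigr)^{1/2}|K'(x,x)|$ provided the leading factor is uniformly $O_p(1)$, i.e. $\sup_{x\le\tau_n}\sqrt{n}\,(1-H(x-))^{1/2}/Y(x)=O_p(1)$. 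This uniform bound is the crux: Proposition~\ref{prop:ProbBounds}.\ref{prop:ProbBound2} gives $Y(x)\ge\beta n(1-H(x-))$, which reduces the problem to bounding $n(1-H(x-))$ from below uniformly over $x\le\tau_n$; since $1-H(x-)$ is non-increasing, the infimum is attained at $\tau_n$, and combining $Y(\tau_n)\ge 1$ with Proposition~\ref{prop:ProbBounds}.\ref{prop:ProbBound3} (which gives $Y(\tau_n)\le\beta^{-1}n(1-H(\tau_n-))$) yields $n(1-H(\tau_n-))\ge\beta$ with high probability. The dominating function is therefore $R(x)=\bigl(S(x-)/(1-G(x-))\bigr)^{1/2}|K'(x,x)|$, which is $F$-integrable precisely by Condition~\ref{assu:nondegenerate}.ii, so Lemma~\ref{lemma:integralConvergence2} gives $\widehat{P}(\tau_n)=o_p(1)$ and completes the argument.
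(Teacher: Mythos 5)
Your proposal is correct and follows essentially the same route as the paper: collapse the diagonal to an integral against $d[M]$, dominate by the submartingale with $|K'(x,x)|$, pass to the compensator via Lenglart--Rebolledo, and conclude with a dominated-convergence argument whose dominating function is exactly the integrand of Condition~\ref{assu:nondegenerate}.ii. The only (cosmetic) difference is in how the vanishing factor is arranged: you prove the uniform bound $\sup_{x\le\tau_n}\sqrt{n}(1-H(x-))^{1/2}/Y(x)=O_p(1)$ by lower-bounding $n(1-H(\tau_n-))$ through Proposition~\ref{prop:ProbBounds}.\ref{prop:ProbBound3} and $Y(\tau_n)\ge 1$, and let $R_n(x)=O(n^{-1/2})$ supply the pointwise convergence in Lemma~\ref{lemma:integralConvergence2}, whereas the paper absorbs $\sqrt n$ with one factor $\sqrt{Y(x)}$ via Proposition~\ref{prop:ProbBounds}.\ref{prop:ProbBound2} and keeps $\ind_{\{x\le\tau_n\}}/\sqrt{Y(x)}$ (bounded by $1$ since $Y(x)\ge 1$ there) as the term tending to zero.
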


\begin{lemma}\label{lemma:nQ_DintoMartingale}
Under Condition~\ref{assu:degenerate} it holds that
\begin{align}
nQ_D(\tau_n) &= \frac{1}{n}\int_{0}^{\tau_n}\frac{K'(x,x)}{(1-G(x-))^2}d[M](x)+o_p(1).\nonumber
\end{align}
\end{lemma}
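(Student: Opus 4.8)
The plan is to reduce the claim to a statement about replacing a random coefficient by its deterministic limit inside an integral against the quadratic variation of $M$, and then to dispatch that replacement by passing to a compensator and invoking dominated convergence. First I would recast the diagonal integral. Since a double integral of a product $dM(x)\,dM(y)$ over the diagonal $D(t)=\{(x,y):x=y\le t\}$ collects only the simultaneous jumps of $M$, and $M$ is a pure-jump martingale with optional quadratic variation $[M](t)=\sum_{s\le t}(\Delta M(s))^2$, one obtains
\begin{align*}
Q_D(\tau_n)=\int_0^{\tau_n}\frac{\widehat S_n(x-)^2}{Y(x)^2}K'(x,x)\,d[M](x),
\end{align*}
so that $nQ_D(\tau_n)=\frac1n\int_0^{\tau_n}\big(n\widehat S_n(x-)/Y(x)\big)^2K'(x,x)\,d[M](x)$. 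Comparing with the right-hand side of the Lemma, it then suffices to show that
\begin{align*}
\frac1n\int_0^{\tau_n} g_n(x)K'(x,x)\,d[M](x)=o_p(1),\qquad g_n(x):=\Big(\tfrac{n\widehat S_n(x-)}{Y(x)}\Big)^2-\tfrac{1}{(1-G(x-))^2}.
\end{align*}

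Next I would bound this by its absolute value, writing $\big|\frac1n\int_0^{\tau_n}g_nK'\,d[M]\big|\le W(\tau_n)$ with $W(t)=\frac1n\int_0^t|g_n(x)||K'(x,x)|\,d[M](x)$. The integrand $|g_n(x)||K'(x,x)|$ is predictable, being built from the left-continuous processes $\widehat S_n(x-)$, $Y(x)$ and deterministic functions, and $[M]$ is an increasing process whose compensator is $\langle M\rangle$. Hence $W$ is a nonnegative submartingale whose compensator is $R(t)=\frac1n\int_0^t|g_n||K'|\,d\langle M\rangle$. By the Lenglart--Rebolledo inequality (applied at the stopping time $\tau_n$), it is then enough to prove that $R(\tau_n)\overset{\Prob}{\to}0$.

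Finally I would compute and dominate the compensator. Using $d\langle M\rangle(x)=Y(x)\tfrac{S(x)}{S(x-)^2}\,dF(x)$ from Section~\ref{sec:specialMartingales} gives
\begin{align*}
R(\tau_n)=\int_0^{\tau_n}|g_n(x)||K'(x,x)|\,\frac{Y(x)}{n}\,\frac{S(x)}{S(x-)^2}\,dF(x).
\end{align*}
On a high-probability event, Proposition~\ref{prop:ProbBounds}.\ref{prop:ProbBound1}--\ref{prop:ProbBound2} yield $n\widehat S_n(x-)/Y(x)\le O_p(1)/(1-G(x-))$ uniformly in $x\le\tau_n$, whence $|g_n(x)|\le O_p(1)(1-G(x-))^{-2}$, while Proposition~\ref{prop:ProbBounds}.\ref{prop:ProbBound3} gives $Y(x)/n\le O_p(1)\,(1-H(x-))=O_p(1)\,S(x-)(1-G(x-))$. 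Substituting these and using $(1-G(x-))S(x-)=1-H(x-)$, the integrand is dominated by $O_p(1)\tfrac{|K'(x,x)|S(x)}{1-H(x-)}$, which is $dF$-integrable exactly by Condition~\ref{assu:degenerate}.ii. At the same time, for each fixed $x<\tau$ one has $g_n(x)\to0$ and $Y(x)/n\to1-H(x-)$ almost surely by Proposition~\ref{lemma:supConverSH}, so the integrand tends to $0$ pointwise. Applying Lemma~\ref{lemma:integralConvergence2} with the finite deterministic measure $dF$ (and $\tau_n\to\tau$, so the upper limit causes no trouble) yields $R(\tau_n)=o_p(1)$, and the conclusion follows.

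The hard part is controlling the replacement uniformly up to the random endpoint $\tau_n$, precisely where the consistency $n\widehat S_n(x-)/Y(x)\to(1-G(x-))^{-1}$ deteriorates; a naive uniform bound on $g_n$ is unavailable there. Replacing the random integrator $\tfrac1n d[M]$ by its compensator $\tfrac1n d\langle M\rangle$ is what converts the problem into a dominated-convergence statement against the fixed measure $dF$, and the delicate bookkeeping is the cancellation $\tfrac{1}{(1-G(x-))^2}\cdot S(x-)(1-G(x-))\cdot\tfrac{S(x)}{S(x-)^2}=\tfrac{S(x)}{1-H(x-)}$, which makes the dominating function coincide exactly with the integrand of Condition~\ref{assu:degenerate}.ii.
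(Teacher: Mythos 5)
Your proposal is correct and follows essentially the same route as the paper's own proof: the same identification $Q_D(\tau_n)=\int_0^{\tau_n}\widehat S_n(x-)^2Y(x)^{-2}K'(x,x)\,d[M](x)$ via the simultaneous jumps of $M$, the same reduction to the predictable difference $U_n(x)=\bigl|(n\widehat S_n(x-)/Y(x))^2-(1-G(x-))^{-2}\bigr|$, the same passage from $[M]$ to $\langle M\rangle$ followed by Lenglart--Rebolledo, and the same application of Lemma~\ref{lemma:integralConvergence2} with pointwise convergence from Proposition~\ref{lemma:supConverSH}, uniform bounds from Proposition~\ref{prop:ProbBounds}, and the dominating function $|K'(x,x)|S(x)/(1-H(x-))$ supplied by Condition~\ref{assu:degenerate}.ii. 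I see no gaps; even the cancellation bookkeeping matches the paper's computation exactly.
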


\begin{lemma}\label{lemma:snNDQ}
Under Condition~\ref{assu:nondegenerate} it holds that $
\sqrt n Q_C(\tau_n) \overset{\Prob}{\to} 0.$
\end{lemma}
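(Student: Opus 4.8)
The plan is to recognise $Q_C(\tau_n)$ as a double stochastic integral of the type analysed in Theorem~\ref{thm:doubleMartingale} and then run a Lenglart-Rebolledo argument. Writing $h(x,y)=\frac{\widehat S_n(x-)\widehat S_n(y-)}{Y(x)Y(y)}K'(x,y)$, the integrand is a product of the predictable processes $\widehat S_n(\cdot-)/Y(\cdot)$ in each coordinate times the deterministic kernel $K'$, so it is $\mathcal P$-measurable by Proposition~\ref{coll:h predictable}. After checking the integrability requirements \eqref{eqn:DoubleMcond2} and~\eqref{eqn:doubleMComcond2} (routine, using Proposition~\ref{prop:ProbBounds} and the square-integrability of counting-process martingales), Theorem~\ref{thm:doubleMartingale} gives that $Q_C(t)$ is a square-integrable $(\mathcal F_t)$-martingale with
\begin{align*}
\langle Q_C\rangle(t)=\int_0^t \frac{\widehat S_n(y-)^2 S(y)}{Y(y)S(y-)^2}\,G_y(y-)^2\,dF(y), \qquad G_y(y-)=\int_{(0,y)}\frac{\widehat S_n(x-)}{Y(x)}K'(x,y)\,dM(x).
\end{align*}
Since $Q_C$ is a martingale, $(\sqrt n\,Q_C)^2$ is a non-negative submartingale with compensator $n\langle Q_C\rangle$; by the Lenglart-Rebolledo inequality it therefore suffices to prove $n\langle Q_C\rangle(\tau_n)\overset{\Prob}{\to}0$.

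For the reduction I would first strip the stochastic factors from the prefactor using the uniform bounds of Propositions~\ref{lemma:supConverSH} and \ref{prop:ProbBounds}: on an event of arbitrarily high probability $\widehat S_n(y-)^2/S(y-)^2=O_p(1)$ and $n/Y(y)=O_p(1)\,(1-H(y-))^{-1}$ uniformly for $y\le\tau_n$, so that
\begin{align*}
n\langle Q_C\rangle(\tau_n)=O_p(1)\int_0^{\tau_n}\frac{S(y)}{1-H(y-)}\,G_y(y-)^2\,dF(y).
\end{align*}
The inner object $G_y$ is, for each fixed $y$, a square-integrable martingale in its upper limit with $\langle G_y\rangle(y-)=\int_{(0,y)}\frac{\widehat S_n(x-)^2 S(x)}{Y(x)S(x-)^2}K'(x,y)^2\,dF(x)=O_p(1)\,\frac1n\int_{(0,y)}\frac{S(x)}{1-H(x-)}K'(x,y)^2\,dF(x)$, and this is what I would use to control $G_y(y-)^2$ and to reduce the whole expression to the deterministic double integral
\begin{align*}
\frac1n\int_0^{\tau}\int_0^{y}\frac{S(x)S(y)}{(1-H(x-))S(y-)}K'(x,y)^2\,dF(x)\,dF(y),
\end{align*}
which is finite by Equation~\eqref{eqn:FiniteOpe4} of Lemma~\ref{Lemma:FiniteOpe} under Condition~\ref{assu:nondegenerate}; the factor $1/n$ then forces the limit to be $0$.

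The delicate point, and the main obstacle, is that the outer weight produced by the predictable variation is $\frac{S(y)}{1-H(y-)}$, which blows up as $y\to\tau$ and is heavier by a factor $(1-G(y-))^{-1}$ than the weight $\frac{S(y)}{S(y-)}$ that Condition~\ref{assu:nondegenerate} actually controls through Equation~\eqref{eqn:FiniteOpe4}. A crude second-moment bound would therefore only close under the stronger Condition~\ref{assu:degenerate} (Equation~\eqref{eqn:FiniteOpe3}). To stay within the non-degenerate condition I would exploit the extra $1/n$ afforded by the weaker $\sqrt n$-scaling together with the deterministic truncation $T_n=\inf\{t:H(t)\ge 1-C/n\}$ used in Lemmas~\ref{lemma:npsi1-2R2=01} and \ref{lemma:psi1-2R2=01}: splitting the $y$-integral at $T_n$, the region $(T_n,\tau_n]$ is absorbed by the $\tau_n$-tail estimates of Proposition~\ref{prop:ProbBounds}, while on $(0,T_n]$ the censoring weight is charged to the inner variable $x$ (where $x<y$ keeps $1-H(x-)$ away from its worst value) and only the bounded weight $S(y)/S(y-)$ is charged to the outer variable, matching exactly Equation~\eqref{eqn:FiniteOpe4}. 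Making this decoupling rigorous, so that the heavier outer factor is never integrated against the inner censoring weight simultaneously, is the part that requires the most care.
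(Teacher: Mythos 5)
Your skeleton coincides with the paper's: Theorem~\ref{thm:doubleMartingale} makes $Q_C$ a square-integrable $(\mathcal F_t)$-martingale, Lenglart--Rebolledo reduces the claim to $n\langle Q_C\rangle(\tau_n)=o_p(1)$, Propositions~\ref{prop:ProbBounds}.\ref{prop:ProbBound1} and \ref{prop:ProbBounds}.\ref{prop:ProbBound2} strip the prefactors to give $O_p(1)\int_0^\tau \frac{S(y)}{1-H(y-)}M_y^\star(\tau_n)^2dF(y)$ with $M_y^\star$ your $G_y$, and the square of the inner martingale is controlled through its variation --- formally via the submartingale $Z(t)=\int_0^{\tau}\frac{S(y)}{1-H(y-)}M_y^\star(t)^2dF(y)$ of Lemma~\ref{lemma:IntegralOfMartingales} and a second Lenglart--Rebolledo step, a device you allude to but never spell out (you cannot pass an $O_p(1)$ bound on $\langle G_y\rangle$ through an expectation). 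The genuine gap is at the decisive step, and your self-diagnosed ``main obstacle'' is an artifact of your own bound: trading $\ind_{\{x\leq\tau_n\}}/Y(x)$ for $O_p(1)/(n(1-H(x-)))$ plants a heavy censoring weight on \emph{both} coordinates and, as you correctly note, lands you on the Condition~\ref{assu:degenerate} integral \eqref{eqn:FiniteOpe3} instead of \eqref{eqn:FiniteOpe4}. The paper never makes this trade. It keeps $\ind_{\{x\leq\tau_n\}}/Y(x)$ inside the compensator $\langle Z\rangle(\tau_n)$: since $Y(x)\geq 1$ on $\{x\leq\tau_n\}$ and $Y(x)\to\infty$ for each fixed $x\in I_H$, dominated convergence applies with dominating function $\frac{S(x)S(y)}{1-H(y-)}K'(x,y)^2\ind_{\{x<y\}}$, whose integrability follows from \eqref{eqn:FiniteOpe4} after swapping the names of $x$ and $y$ (symmetry of $K'$; the heavy weight then sits on the first coordinate and $S(y-)^{-1}\geq 1$ absorbs the rest). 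Only one censoring weight ever appears, no truncation is needed, and the vanishing comes from $1/Y(x)\to 0$ pointwise, not from $1/n$ bookkeeping.

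Your proposed $T_n$-repair is plausibly completable but is not a proof as written, and three of its steps fail or are unsupported. First, the displayed ``reduction'' to $\frac{1}{n}\int_0^{\tau}\int_0^{y}\frac{S(x)S(y)}{(1-H(x-))S(y-)}K'(x,y)^2dF(x)dF(y)$ is not what your bounds deliver: the outer weight is $\frac{S(y)}{1-H(y-)}=\frac{S(y)}{(1-G(y-))S(y-)}$, not $\frac{S(y)}{S(y-)}$, so what you actually reach is $\frac{1}{n}$ times the (possibly infinite, under Condition~\ref{assu:nondegenerate} alone) double-weighted integral --- a vacuous bound, as you later concede. Second, the tail: Lemma~\ref{lemma:IntegralOfMartingales} requires a deterministic domain, so the outer $y$-integral in $Z$ runs over all of $(0,\tau)$, not $(0,\tau_n]$; on $(T_n,\tau)$ one has $1-H(y-)\leq C/n$, so $\frac{1}{n(1-H(y-))}$ is at least $1/C$ and unbounded as $y\to\tau$, Proposition~\ref{prop:ProbBounds} gives no handle beyond $\tau_n$, and your sketch ignores $(\tau_n,\tau)$ entirely --- ``absorbed by the $\tau_n$-tail estimates'' is not an argument there. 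What saves the tail is exactly the paper's device ($\ind_{\{x\leq\tau_n\}}/Y(x)\leq 1$ plus the vanishing \eqref{eqn:FiniteOpe4}-tail as $T_n\to\tau$); but once you invoke that domination on the tail you may as well apply it on all of $(0,\tau)$ and dispense with the split. Third, your bulk bound is $O_p(1)K_0/C$ with $K_0$ the constant from \eqref{eqn:FiniteOpe4}, which is small but not $o_p(1)$ for fixed $C$; a final limiting argument sending $C\to\infty$ after $n\to\infty$ is needed and missing. In short: right skeleton, right identification of where Condition~\ref{assu:nondegenerate} bites, but the crux is left open, and the clean resolution is to use $1/Y(x)$ itself as the vanishing factor in a dominated-convergence step rather than converting it into a deterministic rate.
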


\begin{lemma}\label{lemma:transformDMC}
Under Condition~\ref{assu:degenerate} it holds that
\begin{align*}
nQ_C(\tau_n)&= \frac{1}{n}\int_{0}^{\tau_n}\int_{(0,y)} \frac{K'(x,y)}{(1-G(x-))(1-G(y-))}dM(x)dM(y) + o_p(1).
\end{align*}
\end{lemma}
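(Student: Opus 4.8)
The plan is to replace the random integrand factor $\widehat S_n(x-)/Y(x)$ by its deterministic limit. Writing $g_n(x) = n\widehat S_n(x-)/Y(x)$ and $g(x) = 1/(1-G(x-))$, we have $nQ_C(\tau_n) = \frac{1}{n}\int_{C(\tau_n)} g_n(x)g_n(y)K'(x,y)\,dM(x)dM(y)$, while the target is the same expression with $g_n$ replaced by $g$. Recall from item~iii) of Proposition~\ref{lemma:supConverSH} that $g_n \to g$ uniformly on every interval $[0,t^\star]$ with $1-H(t^\star-)>0$, and from Proposition~\ref{prop:ProbBounds} that $\sup_{x\leq\tau_n} g_n(x)/g(x) = O_p(1)$. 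Thus it suffices to show that the difference double integral, with integrand $(g_n(x)g_n(y)-g(x)g(y))K'(x,y)/n$, is $o_p(1)$.

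First I would split the integrand using $g_n(x)g_n(y)-g(x)g(y) = (g_n(x)-g(x))g_n(y)+g(x)(g_n(y)-g(y))$, producing two error processes $U_n^A$ and $U_n^B$. Each integrand is $\mathcal P$-measurable by Proposition~\ref{coll:h predictable}, since $g_n$ is a ratio of the predictable processes $\widehat S_n(x-)$ and $Y(x)$, while $g$ and $K'$ are deterministic. After checking the integrability hypotheses \eqref{eqn:DoubleMcond2}--\eqref{eqn:doubleMComcond2} (which follow from Condition~\ref{assu:degenerate} and Lemma~\ref{Lemma:FiniteOpe}), Theorem~\ref{thm:doubleMartingale} realises $U_n^A(t)$ and $U_n^B(t)$ as square-integrable $(\mathcal F_t)$-martingales with explicit predictable variation processes, so it is enough to show $\langle U_n^A\rangle(\tau_n)$ and $\langle U_n^B\rangle(\tau_n)$ are $o_p(1)$ and then invoke the Lenglart--Rebolledo inequality.

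The bulk of the work is estimating these predictable variations. For $U_n^A$ the outer factor is $g_n(y)^2 Y(y)S(y)/S(y-)^2$, which Proposition~\ref{prop:ProbBounds} bounds by $O_p(1)\cdot n\,\tilde w(y)$ for a deterministic weight $\tilde w$; pulling out the $O_p(1)$ leaves $\frac{1}{n}\int_{(0,\tau_n]} \big(\int_{(0,y)}(g_n(x)-g(x))K'(x,y)dM(x)\big)^2 \tilde w(y)\,dF(y)$, where the powers of $n$ cancel. To control this squared inner stochastic integral I would view it, via Lemma~\ref{lemma:IntegralOfMartingales}, as a submartingale whose compensator is $O_p(1)\int\!\int (g_n(x)-g(x))^2 K'(x,y)^2 (1-H(x-))\frac{S(x)}{S(x-)^2}\,dF(x)\,\tilde w(y)\,dF(y)$, and apply Lenglart--Rebolledo a second time. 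As in the exterior proofs I would first replace the random limit $\tau_n$ by a deterministic $T_n\to\tau$, then split the $x$-integration into a bulk part $[0,t^\star]$, where uniform convergence forces $(g_n-g)^2\to 0$, and a tail $(t^\star,\tau]$, where $(g_n-g)^2 = O_p(g^2)$ so that the contribution is dominated by the tail of the finite integral in \eqref{eqn:FiniteOpe3} of Lemma~\ref{Lemma:FiniteOpe} and hence made arbitrarily small by taking $t^\star$ close to $\tau$. The term $U_n^B$ is handled identically, with the roles of the inner factor $g(x)$ and the outer difference $(g_n(y)-g(y))^2$ swapped.

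The main obstacle is the nested double-martingale estimate: the squared inner integral must be controlled uniformly in the outer variable up to the random time $\tau_n$, precisely where the risk set $Y$ degenerates and the factors $g_n$, $g$ blow up. The interplay of the two regimes --- uniform convergence of $g_n$ on compacts away from $\tau$, and the size bound $g_n=O_p(1)g$ combined with the integrability of \eqref{eqn:FiniteOpe3} near $\tau$ --- is what keeps both the bulk and the tail negligible, and verifying that the powers of $n$ cancel correctly through the two applications of Theorem~\ref{thm:doubleMartingale}/Lemma~\ref{lemma:IntegralOfMartingales} and Lenglart--Rebolledo is the delicate bookkeeping.
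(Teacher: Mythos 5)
Your proposal is correct and follows essentially the same route as the paper: your splitting $g_n(x)g_n(y)-g(x)g(y)=(g_n(x)-g(x))g_n(y)+g(x)(g_n(y)-g(y))$ is exactly the paper's Equations~\eqref{eqn:Off1}--\eqref{eqn:Off2}, followed by the same two-level Lenglart--Rebolledo argument through Theorem~\ref{thm:doubleMartingale} and Lemma~\ref{lemma:IntegralOfMartingales}, with the final compensator dominated via Equation~\eqref{eqn:FiniteOpe3} of Lemma~\ref{Lemma:FiniteOpe}. The only cosmetic difference is that your hand-rolled bulk/tail split (with a deterministic $T_n$ replacement that is unnecessary here, since $\tau_n$ is a stopping time and Lenglart--Rebolledo applies to it directly, the paper instead pulling the random limit into indicators $\ind_{\{x\leq\tau_n\}}$) re-derives what the paper packages as Lemma~\ref{lemma:integralConvergence2}.
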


\begin{proof}[\textbf{Proof of Lemma \ref{lemma:sqrtDMarting=0}}]
Starting from Equation~\eqref{eqn:random1kjfjcz}, we get $\sqrt{n}\beta=\sqrt{n}Q(\tau_n)+o_p(1)=\sqrt{n}Q_D(\tau_n)+2\sqrt{n}Q_C(\tau_n)+o_p(1)$. Then, the result follows from Lemmas 
\ref{lemma:sqrtnQD} and \ref{lemma:snNDQ}.
\end{proof}

\begin{proof}[\textbf{Proof of Theorem \ref{lemma:afterMartingalenb}}]

From Equation~\eqref{eqn:random1kjfjcz} it holds that $n\beta=nQ(\tau_n)+o_p(1)=nQ_D(\tau_n)+2nQ_C(\tau_n)+o_p(1)$. Then a direct application of Lemmas \ref{lemma:nQ_DintoMartingale}  and \ref{lemma:transformDMC} yields
\begin{align*}
n\beta&=nQ_D(\tau_n)+2nQ_C(\tau_n)+o_p(1)\\
&= \frac{1}{n}\int_{0}^{\tau_n}\int_{0}^{\tau_n} \frac{K'(x,y)}{(1-G(x-))(1-G(y-))}dM(x)dM(y) + o_p(1)\\
&=\frac{1}{n}\sum_{i=1}^n\sum_{j=1}^n\int_{0}^{\tau_n}\int_{0}^{\tau_n} \frac{K'(x,y)}{(1-G(x-))(1-G(y-))}dM_i(x)dM_j(y) + o_p(1).
\end{align*}
\end{proof}

It just remains to prove Lemmas \ref{lemma:sqrtnQD}, \ref{lemma:nQ_DintoMartingale},  \ref{lemma:snNDQ}, and  \ref{lemma:transformDMC}.

\subsection{Integral over Diagonal {$D(t)$}{}: Proof of Lemmas~\ref{lemma:sqrtnQD} and \ref{lemma:nQ_DintoMartingale}}\label{eqn:sectionIntegralonDiagonal}
 
 Observe that $Q_D(t)$ satisfies
\begin{align}
Q_D(t)= \int_{0}^{t} \frac{\widehat S_n(x-)^2}{Y(x)^ 2} K'(x,x) d[M](x).\nonumber
\end{align}
The latter can be checked by noticing that the measure $dM(x)dM(y)$ of a small square whose main diagonal goes from $(a,a)$ to $(b,b)$ is $(M(b)-M(a))^2$. When $b$ approaches $a$ from above, we have that $(M(b) - M(a))^2 \to (\Delta M(a))^2$ (the limit is well-defined for $M$). Since $M$ is the difference of two increasing processes we have that the number of discontinuities is at most countable, then $\int_0^{t} f(x)d[M](x)=\sum_{x\leq t} f(x)(\Delta M(x))^2$.

To analyse the process $Q_D(t)$, recall that $[M]$ is a submartingale with compensator given by $\langle M \rangle$. Thus, for any predictable process $H\geq 0$, $\int_0^t H(x)d[M](x)$ is a submartingale with compensator given by $\int_0^t H(x)d\langle M \rangle (x)$. Finally, by the Lenglart-Rebolledo inequality, if we have  $\int_0^{\tau_n} H(x)d\langle M \rangle (x) \overset{\Prob}{\to} 0$, then we get that $\int_0^{\tau_n} H(x)d[M](x) \overset{\Prob}{\to} 0$.

\begin{proof}[\textbf{Proof of Lemma~\ref{lemma:sqrtnQD}}]

Define the $(\mathcal{F}_t)$-submartingale 
\begin{align*}
W(t)=\int_{0}^{t} \frac{\widehat S_n(x-)^2}{Y(x)^ 2} |K'(x,x)| d[M](x),
\end{align*}
and observe that $|\sqrt{n}Q_D(t)|\leq\sqrt{n}W(t)$. Thus, it is enough to prove that $\sqrt{n}W(t)=o_p(1)$. Abusing notation, denote by $\langle W\rangle(t)$ the compensator of $W(t)$. Then we will prove that  $\sqrt{n}\langle W\rangle(\tau_n)=o_p(1)$, and thus,  by the Lenglart-Rebolledo inequality, we will get $\sqrt{n}W(\tau_n)=o_p(1)$.

Observe that
\begin{align}
\sqrt{n}\langle W\rangle(\tau_n)&=\sqrt{n}\int_{0}^{\tau_n} \frac{\widehat S_n(x-)^2}{Y(x)^ 2} |K'(x,x)| d\langle M \rangle(x)\nonumber\\
&= \sqrt{n}\int_{0}^{\tau_n} \frac{\widehat S_n(x-)^2}{Y(x)^2} |K'(x,x)| \frac{Y(x)S(x)}{S(x-)^2}dF(x)\nonumber\\
&=\int_{0}^{\tau_n} \frac{\widehat S_n(x-)^2}{S(x-)^2} \frac{S(x)}{Y(x)/\sqrt{n}}|K'(x,x)|dF(x)\nonumber\\
&=O_p(1)\int_{0}^{\tau} \frac{\ind_{\{x \leq\tau_n\}}}{\sqrt{Y(x)}}\sqrt{\frac{S(x)}{1-G(x-)}} |K'(x,x)| dF(x),\nonumber
\end{align}
where the fourth equality follows from Propositions \ref{prop:ProbBounds}.\ref{prop:ProbBound1} and \ref{prop:ProbBounds}.\ref{prop:ProbBound2}. Finally, we claim that $\int_{0}^{\tau} \frac{1}{\sqrt{Y(x)}}\sqrt{\frac{S(x)}{1-G(x-)}} |K'(x,x)| dF(x) = o(1)$. This is verified by applying Dominated Convergence Theorem. Indeed, notice that $ \frac{\ind_{\{x \leq\tau_n\}}}{\sqrt{Y(x)}} \to 0$ for each fixed $x \in I_H$, thus the integrand tends to zero. Moreover, by using that $Y(x)\geq 1$ for $x\leq \tau_n$, the integrand is bounded by an integrable function due to Condition~\ref{assu:nondegenerate}.
\end{proof}

\begin{proof}[\textbf{Proof of Lemma~\ref{lemma:nQ_DintoMartingale}}]
Observe that it is enough to prove that
\begin{align}
\frac{1}{n}\int_{0}^{\tau_n} \left|\frac{\widehat S_n(x-)^ 2}{Y(x)^2/n^2}-\frac{1}{(1-G(x-))^2}\right||K'(x,x)|d[M](x)=o_p(1).\nonumber
\end{align}
Write $U_n(x) =\left|\frac{\widehat S_n(x-)^ 2}{Y(x)^2/n^2}-\frac{1}{(1-G(x-))^2}\right|$, which is predictable w.r.t. $(\mathcal F_x)_{x\geq 0}$. Also, define the process $W(t)=\frac{1}{n}\int_{0}^{t} U_n(x)|K'(x,x)|d[M](x)$, and observe that it corresponds to an $(\mathcal{F}_t)$-submartingale. We prove that $W(\tau_n) = o_p(1)$ by using the Lenglart-Rebolledo inequality. For such, we have to prove that its compensator, which by abusing notation we denote by $\langle W\rangle$, satisfies $\langle W\rangle(\tau_n) = o_p(1)$. A simple computation shows
\begin{align*}
\langle W \rangle(\tau_n)&=\frac{1}{n}\int_{0}^{\tau_n}U_n(x)|K'(x,x)|d\langle M\rangle(x)\\
&=\frac{1}{n}\int_{0}^{\tau_n} U_n(x)|K'(x,x)|\frac{Y(x)S(x)}{S(x-)^2}dF(x)\\
&=O_p(1)\int_{0}^{\tau} \ind_{\{x\leq \tau_n\}}U_n(x)|K'(x,x)|\frac{(1-G(x-))S(x)}{S(x-)}dF(x)\\
&=o_p(1),
\end{align*}
where the third equality follows from Proposition \ref{prop:ProbBounds}.\ref{prop:ProbBound3}, and the last equality follows from applying Lemma~\ref{lemma:integralConvergence2}, whose conditions we proceed to verify: for the first condition, Proposition~\ref{lemma:supConverSH} yields $U_n(x)=\left|\frac{\widehat S_n(x-)^2}{Y(x)^2/n^2}-\frac{1}{(1-G(x-))^2}\right|\to 0$ for every fixed $x\in I_H$. For the second condition, set $R(x) = \frac{|K'(x,x)|S(x)}{1-H(x-)}$, which is integrable by Condition~\ref{assu:degenerate}, then, Propositions \ref{prop:ProbBounds}.\ref{prop:ProbBound1} and \ref{prop:ProbBounds}.\ref{prop:ProbBound2} give  $U_n(x)=O_p(1)\frac{1}{(1-G(x-))^2}$, uniformly on $x\leq \tau_n$. Then
\begin{align*}
&\ind_{\{x\leq \tau_n\}}U_n(x)|K'(x,x)|\frac{(1-G(x-))S(x)}{S(x-)}\\
&\quad=O_p(1)|K'(x,x)|\frac{S(x)}{(1-G(x-))S(x-)}=O_p(1)R(x),
\end{align*}
uniformly on $x\leq \tau$. 
\end{proof}

\subsection{Integral over Off-diagonal {$C(t)$}{}: Proof of Lemmas \ref{lemma:snNDQ} and  \ref{lemma:transformDMC}}

\begin{proof}[\textbf{Proof of Lemma~\ref{lemma:snNDQ}}]
From Theorem~\ref{thm:doubleMartingale}, $Q_C(t)$ is a square-integrable $(\mathcal F_t)$-martingale with mean 0. Then, by the Lenglart-Rebolledo inequality, it is enough to prove its predictable variation process, denoted by $\langle Q_C \rangle(t)$, satisfies $n\langle Q_C \rangle(\tau_n) = o_p(1)$.

From Theorem~\ref{thm:doubleMartingale} we have that  $n\langle Q \rangle(\tau_n)$ is equal to
\begin{align}
&n\int_{0}^{\tau_n}	\frac{\widehat S_n(y-)^2S(y)}{Y(y)S(y-)^2} \left(\int \displaylimits_{(0,y)}\frac{\widehat S_n(x-)}{Y(x)}K'(x,y)dM(x) \right)^2 dF(y)\nonumber\\
&\quad= O_p(1) \int_{0}^{\tau}	\frac{S(y)}{1-H(y-)} \left(\int_0^{\tau_n}\frac{\widehat S_n(x-)}{Y(x)}K'(x,y)\ind_{\{x<y\}}dM(x) \right)^2 dF(y)\nonumber\\
&\quad =O_p(1) \int_{0}^{\tau}	\frac{S(y)}{1-H(y-)} M_y^\star(\tau_n)^2 dF(y)\label{eqn:random1928dnfuf},
\end{align}
where the first equality is due to Propositions \ref{prop:ProbBounds}.\ref{prop:ProbBound1} and \ref{prop:ProbBounds}.\ref{prop:ProbBound2}, and in the second equality we define $M_y^\star(t)= \int_{0}^t\frac{\widehat S_n(x-)}{Y(x)}K'(x,y)\ind_{\{x<y\}}dM(x)$, which is a square-integrable $(\mathcal{F}_t)$-martingale for any fixed $y\in I_H$. 

Define the process $Z(t) = \int_{0}^{\tau}\frac{S(y)}{1-H(y-)}M_y^\star(t)^2dF(y)$, and notice that the integral in Equation~\eqref{eqn:random1928dnfuf} corresponds to $Z({\tau_n})$. By Lemma~\ref{lemma:IntegralOfMartingales},  $Z(t)$ is an $(\mathcal{F}_t)$-submartingale, hence we shall use the Lenglart-Rebolledo inequality to prove $Z({\tau_n}) = o_p(1)$ by proving that the compensator of $Z(t)$, which by abusing notation we denote by $\langle Z \rangle (t)$, satisfies $\langle Z \rangle(\tau_n) = o_p(1)$. From Lemma~\ref{lemma:IntegralOfMartingales}, and Propositions \ref{prop:ProbBounds}.\ref{prop:ProbBound1} and  \ref{prop:ProbBounds}.\ref{prop:ProbBound2}, we have that
\begin{align}
\langle Z \rangle(\tau_n)&=\int_{0}^{\tau}\int_{0}^{\tau_n}
\frac{S(y)}{1-H(y-)} \frac{\widehat S_n(x-)^2}{Y(x)}K'(x,y)^2\ind_{\{x<y\}}\frac{S(x)}{S(x-)^2}dF(x)dF(y)\nonumber\\
&=O_p(1) \int_{0}^{\tau}
\int_{0}^{\tau} \frac{\ind_{\{x \leq\tau_n\}}S(y)S(x)}{Y(x)(1-H(y-))}K'(x,y)^2\ind_{\{x<y\}}dF(x)dF(y).\nonumber
\end{align}
We claim that the previous quantity tends to 0 as $n$ approaches infinity by an application of the Dominated Convergence Theorem. Indeed, notice that $\frac{\ind_{\{x \leq\tau_n\}}}{Y(x)}\to 0$ for any fixed $x\in I_H$, and that 
\begin{align*}
\frac{\ind_{\{x \leq \tau_n\}}}{Y(x)}\frac{S(y)S(x)}{(1-H(y-))}K'(x,y)^2\ind_{\{x<y\}}&\leq  \frac{S(y)S(x)}{(1-H(y-))}K'(x,y)^2,
\end{align*}
which is integrable by Equation~\eqref{eqn:FiniteOpe4} of Lemma~\ref{Lemma:FiniteOpe} (Recall that $K' = A_1A_2K$).
\end{proof}

\begin{proof}[\textbf{Proof of Lemma~\ref{lemma:transformDMC}}]
The result follows from proving that
\begin{align}
\frac{1}{n}\int_{C(\tau_n)} \left(\frac{\widehat S_n(x-)\widehat S_n(y-)K'(x,y)}{(Y(x)/n)(Y(y)/n)}-\frac{K'(x,y)}{(1-G(x-))(1-G(y-))}\right)dM(x)dM(y)\nonumber
\end{align}
is $o_p(1)$, which is equivalent to prove that
\begin{align}
\frac{1}{n}\int_{0}^{\tau_n}\int_{(0,y)} \left(\frac{\widehat S_n(x-)}{Y(x)/n}-\frac{1}{1-G(x-)}\right)\frac{\widehat S_n(y-)K'(x,y)}{Y(y)/n}dM(x)dM(y)= o_p(1),\label{eqn:Off1}
\end{align}
and that
\begin{align}
\frac{1}{n}\int_{0}^{\tau_n}\int_{(0,y)} \left(\frac{\widehat S_n(y-)}{Y(y)/n}-\frac{1}{1-G(y-)}\right)\frac{ K'(x,y)}{(1-G(x-))}dM(x)dM(y) = o_p(1).\label{eqn:Off2}
\end{align}
We only prove Equation~\eqref{eqn:Off1}, as Equation~\eqref{eqn:Off2} follows by repeating the same steps.

Define $U_n(x) = \left(\frac{\widehat S_n(x-)}{Y(x)/n}-\frac{1}{1-G(x-)}\right)$ which is predictable w.r.t.  $(\mathcal F_x)_{x\geq 0}$, and define the process $W(t)$ as
\begin{align*}
W(t) =\int_{0}^t\int_{(0,y)}U_n(x)\frac{\widehat S_n(y-)K'(x,y)}{Y(y)}dM(x)dM(y),
\end{align*}
which, by Theorem~\ref{thm:doubleMartingale}, is a square-integrable $(\mathcal F_t)$-martingale. We just need to prove that $W(\tau_n) = o_p(1)$. By the Lenglart-Rebolledo inequality, it is enough to check that the predictable variation process of $W(t)$, $\langle W \rangle(t)$, satisfies $\langle W \rangle(\tau_n) =o_p(1)$. From Theorem~\ref{thm:doubleMartingale}, we have
\begin{align}
\langle W \rangle(\tau_n) &= \int_{0}^{\tau_n}\left(\int_{(0,y)} U_n(x)K'(x,y)dM(x)\right)^2\frac{\widehat S_n(y-)^2S(y)}{Y(y)S(y-)^2}dF(y)\nonumber\\
&=O_p(1)\frac{1}{n}\int_{0}^{\tau}\left(\int_{0}^{\tau_n} U_n(x)K'(x,y)\ind_{\{x<y\}}dM(x)\right)^2
\frac{S(y)}{1-H(y-)}dF(y),\nonumber\\
&=O_p(1)\frac{1}{n}\int_{0}^{\tau}M_y^\star(\tau_n)^2\frac{S(y)}{1-H(y-)}dF(y),\label{eqn:random39183g31df}
\end{align}
where the second equality is due to Propositions \ref{prop:ProbBounds}.\ref{prop:ProbBound1} and \ref{prop:ProbBounds}.\ref{prop:ProbBound2}, and in the third equality we  define $M_y^\star(t) = \int_{0}^{t} U_n(x)K'(x,y)\ind_{\{x<y\}}dM(x)$.

We proceed to check that Equation~\eqref{eqn:random39183g31df} is $o_p(1)$.  Observe that for any fixed $y\in I_H$, $M_y^\star(t) = \int_{0}^{t} U_n(x)K'(x,y)\ind_{\{x<y\}}dM(x)$ is a square-integrable $(\mathcal{F}_t)$-martingale, thus, by Lemma~\ref{lemma:IntegralOfMartingales}, the process $Z(t) =\frac{1}{n}\int_{0}^{\tau}M_y^\star(t)^2\frac{S(y)dF(y)}{1-H(y-)}$ is an $(\mathcal{F}_t)$-submartingale. Note that $\langle W \rangle(\tau_n)=Z(\tau_n)$. We check that $Z(\tau_n)=o_p(1)$ by verifying that the compensator of $Z$, which by abusing notation we denote by $\langle Z\rangle (t)$, satisfies $\langle Z\rangle (\tau_n)=o_p(1)$. From Lemma~\ref{lemma:IntegralOfMartingales}, $\langle Z\rangle (\tau_n)$ is given by

\begin{align}
&\frac{1}{n}\int_{0}^{\tau}\int_{0}^{\tau_n} U_n(x)^2K'(x,y)^2\frac{Y(x)S(x)}{S(x-)^2}
dF(x)\frac{S(y)}{1-H(y-)}dF(y)\nonumber\\ 
&\quad=O_p(1)\int_{0}^{\tau}\int_{0}^{\tau} \ind_{\{y\leq \tau_n\}}U_n(x)^2K'(x,y)^2\frac{(1-H(x-))S(x)S(y)}{(1-H(y-))S(x-)^2}dF(x)dF(y),\label{eqn:randomxijvi42}
\end{align}
where the equality  follows from Proposition~\ref{prop:ProbBounds}.\ref{prop:ProbBound3}. We shall verify the conditions of Lemma \ref{lemma:integralConvergence2} to prove that Equation~\eqref{eqn:randomxijvi42} is $o_p(1)$. Set $R(x,y) = \frac{K'(x,y)^2S(x)S(y)}{(1-H(x-)(1-H(y-))}$ and $R_n(x,y)$ as the integrand in  Equation~\eqref{eqn:randomxijvi42}. To verify the first condition of Lemma \ref{lemma:integralConvergence2}, note that  $R_n(x)\to 0$ for each $x\in I_H$, since $U_n(x) \to 0$ by Proposition~\ref{lemma:supConverSH}. To verify  the second condition, Propositions~\ref{prop:ProbBounds}.\ref{prop:ProbBound1} and \ref{prop:ProbBounds}.\ref{prop:ProbBound2} yield  $U_n(x)= O_p(1)(1-G(x-))^{-1}$ uniformly on $x\leq \tau_n$, thus the integrand satisfies
\begin{align*}
R_n(x,y)&=\ind_{\{y\leq \tau_n\}}U_n(x)^2K'(x,y)^2\frac{(1-H(x-))S(x)S(y)}{(1-H(y-))S(x-)^2}\nonumber\\
&=O_p(1)\frac{K'(x,y)^2S(x)S(y)}{(1-H(x-))(1-H(y-))} = O_p(1)R(x,y),
\end{align*}
uniformly in $x$. Finally, the function $R(x,y) = \frac{K'(x,y)^2S(x)S(y)}{(1-H(y-))(1-H(x-))}$ is integrable due to Equation~\eqref{eqn:FiniteOpe3} of Lemma~\ref{Lemma:FiniteOpe} (recall that $K' = A_1A_2K$).
\end{proof}
\section*{Acknowledgement}
Tamara Fern\'andez was supported by the Biometrika Trust. Nicol\'as Rivera was supported by Thomas Sauerwald's ERC Starting Grant 679660 DYNAMIC MARCH.

\bibliography{ref.bib}
\appendix

\section{Proof of Lemma~\ref{Lemma:FiniteOpe}}\label{apendix:FuturePast}
In order to prove Lemma~\ref{Lemma:FiniteOpe}, we introduce the operator $R:L^2(F) \to L^2(F)$,
\begin{align*}
(Rg)(x) = \frac{1}{S(x)} \int_x^{\infty} g(t)dF(t).
\end{align*}
Note that the operator $A$ can be written as $A= \Id-R$, where $\Id$ is the identity operator. Additionally, for bivariate functions $K:\R_+^2 \to \R$, we define $R_1K$ and $R_2K$ as the operator $R$ applied on the first and second coordinate of $K$, respectively. Note that $R_1$ and $R_2$ commute

Let $X \sim F$ and $g \in L^2(F)$, then  we claim the operator $R$ satisfies that
\begin{align}
\E\left(\frac{S(X)}{S(X-)}(Rg)(X)^2\right)\leq 4\E(g(X)^2).\label{eqn:opRP1}
\end{align}
The previous equation follows from Equation (4.3) of \citet{Efron1990}, which states that
\begin{align}\label{eqn:equalityEfron}
\var(g(X)) = \E\left(\frac{S(X)}{S(X-)}(Ag)(X)^2\right).
\end{align}
Then, by using that $(Rg)^2(X) \leq 2(g(X)^2+ (Ag)(X)^2)$, we get
\begin{align}
\E\left(\frac{S(X)}{S(X-)}(Rg)(X)^2\right)
&\leq 2\E\left(\frac{S(X)}{S(X-)}g(X)^2\right)+2\E\left(\frac{S(X)}{S(X-)}(Ag)(X)^2\right)\nonumber\\
&\leq 2\E\left(g(X)^2\right)+2\var(g(X)^2)\leq 4\E(g(X)^2),\nonumber
\end{align}
where in the last step we used Equation~\eqref{eqn:equalityEfron}.

Let $\Gamma(x,y) = \frac{|K(x,y)|}{\sqrt{1-G(x-)}}$ and $\Sigma(x,y) = \frac{|K(x,y)|}{\sqrt{(1-G(x-))(1-G(y-))}}$, and notice that Conditions~\ref{assu:nondegenerate} and \ref{assu:degenerate} imply $\Gamma \in L^2(F \times F)$ and $\Sigma \in L^2(F \times F)$, respectively.

Assume  Condition~\ref{assu:nondegenerate} holds, then a simple computation shows
\begin{align}
&\int_0^{\tau} \int_0^{\tau} \frac{(R_1K)(x,y)^2S(x)}{1-H(x-)}dF(x)dF(y) \nonumber\\&\quad = \int_0^{\tau} \int_0^{\tau} \frac{S(x)}{1-H(x-)}\left(\frac{1}{S(x)}\int_x^\tau K(s,y)dF(s) \right)^2dF(x)dF(y)\nonumber\\
&\quad \leq  \int_0^{\tau} \int_0^{\tau} \frac{S(x)}{S(x-)}\left(\frac{1}{S(x)}\int_x^\tau \frac{|K(s,y)|}{\sqrt{1-G(s-)}}dF(s) \right)^2dF(x)dF(y)\nonumber\\
&\quad = \int_0^{\tau} \int_0^{\tau} \frac{S(x)}{S(x-)}(R_1\Gamma)(x,y)^2dF(x)dF(y)\nonumber\\
&\quad \leq 4\int_0^{\tau}\int_{0}^{\tau} \Gamma(x,y)^2dF(x)dF(y)<\infty,\label{eqn: Fo0}
\end{align}
where the last equation follows from Equation~\eqref{eqn:opRP1}, and Condition~\ref{assu:nondegenerate}.   Another similar computation shows

\begin{align}
&\int_0^{\tau}\int_0^{\tau} \frac{(R_2R_1K)(x,y)^2 S(x)S(y)}{(1-H(x-))S(y-)}dF(y)dF(x)\nonumber\\
&\quad\leq 4\int_0^{\tau}\int_0^{\tau} \frac{(R_1K)(x,y)^2 S(x)}{(1-H(x-))}dF(y)dF(x)\nonumber\\
&\quad\leq 16\int_0^{\tau}\int_0^{\tau} \Gamma(x,y)^2 dF(y)dF(x)<\infty \label{eqn: Fo4}
\end{align}
where the first inequality follows from Equation~\eqref{eqn:opRP1} applied on $R_2$ (i.e. applied on $y$). The second inequality is exactly Equation~\eqref{eqn: Fo0}.

 Similar computations, show that under Condition~\ref{assu:degenerate}, we have
\begin{align}
\int_0^{\tau} \int_0^{\tau} \frac{(R_1K)(x,y)^2S(x)}{(1-H(x-))(1-G(y-))}dF(x)\leq 4\int_0^{\tau}\int_{0}^{\tau} \Sigma(x,y)^2dF(x)dF(y)<\infty,\label{eqn: Fo1}
\end{align}
and that
\begin{align}
 & \int_0^{\tau} \frac{(R_2R_1K)(x,y)^2S(x)S(y)}{(1-H(x-))(1-H(y-))}dF(x)dF(y)\nonumber\\
  &\quad \leq 4\int_{0}^{\tau} \int_{0}^{\tau}\frac{S(y)}{1-H(y-)}( R_2\Gamma(x,y))^2dF(x)dF(y)\nonumber\\
  &\quad \leq 16\int_{0}^{\tau} \int_{0}^{\tau} \Sigma(x,y)^2dF(x)dF(y)<\infty.\label{eqn: Fo2}
\end{align}
From here,  under Condition~\ref{assu:nondegenerate}, Equation \eqref{eqn:FiniteOpe1} is a straightforward consequence of  Equation~\eqref{eqn: Fo0}, since
\begin{align}
&\int_0^{\tau}\int_0^{\tau}  \frac{(A_1K)^2(x,y)S(x)}{1-H(x-)}dF(x)dF(y)\nonumber\\
&\quad \leq 2\int_0^{\tau}\int_0^{\tau} \frac{(K(x,y)^2+(R_1K)(x,y)^2) S(x)}{1-H(x-)}dF(x)dF(y).\nonumber
\end{align}
Also, Equation~\eqref{eqn:FiniteOpe4} follows directly from  Equations~\eqref{eqn: Fo0} and~\eqref{eqn: Fo4} since 

\begin{align}
&\int_0^{\tau}\int_0^{\tau}  \frac{(A_1A_2K)^2(x,y)S(x)S(y)}{(1-H(x-))S(y-))}dF(x)dF(y)\nonumber\\
& \leq 4\int_0^{\tau}\int_0^{\tau}  \frac{\Phi(x,y)S(x)S(y)}{(1-H(x-))S(y-))}dF(x)dF(y),\nonumber
\end{align}
where \begin{align*}
\Phi(x,y)= K(x,y)^2+ (R_1K)(x,y)^2+ (R_2K)(x,y)^2 + (R_1R_2K)(x,y)^2.
\end{align*}

Condition~\ref{assu:degenerate}, together with Equations \eqref{eqn: Fo1} and \eqref{eqn: Fo2}, yields Equations \eqref{eqn:FiniteOpe2} and \eqref{eqn:FiniteOpe3} 
by following the exact same procedure.
\section{Proof of Theorem~\ref{thm:doubleMartingale}}\label{Appendix:DSI}

We proceed to prove Theorem~\ref{thm:doubleMartingale}. Let $h$ be $\mathcal P$-measurable. As $M(x)$ is the difference between two right-continuous increasing processes, we have
\begin{align}
\int_{C_t} h(x,y)dM(x)dM(y) = \int_0^t \int_{(0,y)}h(x,y)dM(x)dM(y).\nonumber
\end{align}
We proceed to prove that the process
$\phi(y) = \int_{(0,y)}h(x,y)dM(x)$ is predictable with respect to the sigma-algebra $(\mathcal F_y)_{y\geq 0}$. For this, it is enough to verify the claim for elementary functions of $\mathcal P$, and then we extend the result to general functions in $\mathcal P$.

If $h(x,y)=X\ind_{\{(x,y)\in(a_1,b_1]
\times(a_2,b_2]\}}$ with $X\in\mathcal{F}_{a_2}$ and $0\leq a_1\leq b_1\leq a_2\leq b_2$, then 
\begin{align}
\int_{(0,y)}h(x,y)dM(x) =X\ind_{(a_2,b_2]}(y)\int_{(0,y)}  \ind_{(a_1,b_1]}(x)dM(x),\nonumber
\end{align}
which is predictable with respect to $(\mathcal F_y)_{y\geq 0}$ since both processes, $X\ind_{(a_2,b_2]}(y)$ and $\int_{(0,y)}  \ind_{(a_1,b_1]}(x)dM(x)$, are adapted to $(\mathcal F_y)_{y\geq 0}$ and  left-continuous. For the first process note that it is important that $X \in \mathcal F_{a_2}$ to ensure it is adapted, and for the second one it is key that we are integrating on $(0,y)$ instead of $(0,y]$ to ensure it is left-continuous. Therefore, the process
\begin{align}
Z(t)=\int_0^t\int_{(0,y)}h(x,y)dM(x)dM(y)\nonumber
\end{align}
is the integral of a predictable process, and thus $Z_t$ is an $(\mathcal F_t)$-martingale.  By using Equation~\eqref{eqn:DoubleMcond2} together with Lebesgue Dominated Convergence theorem,  we extend the result to  general functions $h$ of the predictable sigma algebra $\mathcal P$. From Equation~\eqref{eqn:doubleMComcond2}, we get that $Z(t)$ is a square-integrable process, and its predictable variation process is given by 
\begin{align}
\langle Z \rangle(t) = \int_0^t \left(\int_{(0,y)}h(x,y)dM(x) \right)^2 \frac{Y(y)S(y)}{S(y-)^2}dF(y).\nonumber
\end{align}

\section{Properties of {$J$}{}}\label{sec:UStatsRepBeta}
In this section we show
\begin{enumerate}
\item[i.] $\E(J((X_1,\Delta_1),(x,r)))=0$ for any $(x,r)\in I_H \times \{0,1\}$.
\item[ii.] $\E(J((X_1,\Delta_1),(X_1,\Delta_1)))=\int_{0}^{\tau}\frac{S(x)}{1-H(x-)}K'(x,x)dF(x)$
\item[iii.]$\E(J((X_1,\Delta_1),(X_2,\Delta_2))^2)=\int_{0}^{\tau} \int_0^{\tau} \frac{S(x)S(y)}{(1-H(x-))(1-H(y-))}K'(x,y)^2dF(x)dF(y)$
\end{enumerate}

We start with item i. To ease notation, define 
$\tilde K(x,y) = K'(x,y)/((1-G(x-)(1-G(y-))$ and let $dm(s) = r \delta_{x}(ds)-\ind_{\{x\geq s\}}d\Lambda(s)$, then
\begin{align}
J((X_1,\Delta_1),(x,r)) = \int_{0}^{X_1} \left( \int_{0}^{x} \tilde K(s,t)dm(s)\right)dM_1(t).\nonumber
\end{align}
As the term inside the parenthesis is a deterministic function of $t$, the previous integral is just a stochastic integral with respect to the zero mean martingale $M_1$, then by the Optional Stopping Theorem, its expected value is 0.

We continue with item ii. Observe that
\begin{align}
&\E(J((X_1,\Delta_1),(X_1,\Delta_1))\nonumber\\
&\quad=\E\left(\int_{0}^{\tau_n}\int_{0}^{\tau_n} \frac{K'(x,y)}{(1-G(x-))(1-G(y-))}dM_1(x)dM_1(y)\right)\nonumber\\
&\quad= \E\left(\int_{0}^{\tau_n} \frac{K'(x,x)}{(1-G(x-))^2}(dM_1(x))^2\right),\nonumber
\end{align}
where the last equality follows from the fact that the integral in the off-diagonal, that is, for $x\neq y$, defines a zero-mean martingale (see Definition~\ref{def:predicDsigmaAlgebra} and Theorem~\ref{thm:doubleMartingale}). Then, 
\begin{align*}
 \E\left(\int_{0}^{\tau_n} \frac{K'(x,x)}{(1-G(x-))^2}(dM_1(x))^2\right)&=\E\left(\int_{0}^{\tau}\frac{K'(x,x)}{(1-G(x-))^2}d[M_1](x) \right)\nonumber\\
 &= \E\left(\int_{0}^{ \tau}\frac{K'(x,x)}{(1-G(x-))^2}d\langle M_1 \rangle(x) \right)\nonumber\\
 &= \E\left(\int_{0}^{ \tau}\frac{K'(x,x)}{(1-G(x-))^2}\frac{S(x)Y_1(x)}{S(x-)^2}dF(x) \right)\nonumber\\
 &=\int_{0}^{\tau}\frac{S(x)}{1-H(x-)}K'(x,x)dF(x),\nonumber
\end{align*}
where the first and second equalities are due to the properties of $[M_1]$ (see beginning of Section~\ref{eqn:sectionIntegralonDiagonal}). The third equality follows from the definition of $\langle M_1 \rangle$, and the last equality follows by interchanging the integral and expectation.

We finalise with item iii. Let $\tilde K(x,y) = K'(x,y)/((1-G(x-)(1-G(y-))$, by conditional expectation, it holds
\begin{align}
&\E(J((X_1,\Delta_1),(X_2,\Delta_2))^2) \nonumber\\
&\quad= \E(\E(J((X_1,\Delta_1),(X_2,\Delta_2))^2|(X_2,\Delta_2)))\nonumber\\
&\quad=\E\left(\E\left(\left(\int_{0}^{X_1} \int_{0}^{X_2} \tilde K(x,y) dM_2(y)dM_1(x)\right)^2 \given (X_2, \Delta_2)\right)\right),\nonumber\\
&\quad=\E\left(\E\left(\int_{0}^{X_1} \left(\int_{0}^{X_2} \tilde K(x,y) dM_2(y)\right)^2d\langle M_1 \rangle(x) \given (X_2,\Delta_2)\right)\right),\label{eqn:randomeq983godm9301}
\end{align}
\sloppy{where the last equality follows  from the fact that, conditioned on $(X_2,\Delta_2)$,  $\int_{0}^{t} \int_{0}^{X_2} \tilde K(x,y) dM_2(y)dM_1(x)$ is an $(\mathcal F_t)$-martingale.}

Replacing with the value of $\langle M_1(x)\rangle$ in Equation \eqref{eqn:randomeq983godm9301}, we have
\begin{align}
&\E(J((X_1,\Delta_1),(X_2,\Delta_2))^2)\nonumber\\
&\quad=\E\left(\int_{0}^{X_1} \left(\int_{0}^{X_2} \tilde K(x,y) dM_2(y)\right)^2\frac{S(x)Y_1(x)}{S(x-)^2}dF(x) \right)\nonumber\\ 
&\quad=\E\left(\int_{0}^{\tau} \left(\int_{0}^{X_2} \tilde K(x,y) dM_2(y)\right)^2\frac{S(x)Y_1(x)}{S(x-)^2}dF(x) \right)\nonumber\\ 
&\quad=\int_{0}^{\tau} \E\left(\left(\int_{0}^{X_2} \tilde K(x,y) dM_2(y)\right)^2\right)\frac{S(x)(1-H(x-))}{S(x-)^2}dF(x) \nonumber\\
&\quad=\int_{0}^{\tau} \E\left(\int_{0}^{\tau} \tilde K(x,y)^2 \frac{S(y)Y_2(y)}{S(y-)^2}dF(y)\right)(1-H(x-))\frac{S(x)}{S(x-)^2}dF(x)\nonumber\\
&\quad=\int_{0}^{\tau} \int_0^{\tau} \frac{K'(x,y)^2S(x)S(y)}{(1-H(x-))(1-H(y-))}dF(x)dF(y),\nonumber
\end{align}
where the third equality follows from the fact that $(X_1,\Delta_1)$ and $(X_2,\Delta_2)$ are independent and by Fubini's Theorem. The fourth inequality follows by noticing that, for fixed $x$, $\int_0^t \bar K(x,y)dM_2(y)$ is a square-integrable $(\mathcal F_t)$-martingale with predictable variation process given by $\int_0^t \bar K(x,y)^2 \frac{S(y-)}{S(y-)^2}Y_2(y)dF(y)$. 
\section{Proof of Lemma~\ref{lemma:diagKnonDege}}\label{Appendix:SecDiag}
Equation (3.41) of \citet{aalen2008survival} states that
\begin{align}
\Delta \widehat F_n(x) = \widehat S_n(x-)\frac{\Delta N(x)}{Y(x)},\nonumber
\end{align}
hence a Kaplan-Meier weight $W_i$ for an uncensored observation $X_i$ equals $\Delta \widehat F_n(X_i)$ divided by all the uncensored observations that fall exactly in $X_i$, i.e., the weight $W_i$ associated with $X_i$ equals $\frac{\Delta \widehat F_n(X_i)}{\Delta N(X_i)}=\frac{\widehat S_n(X_i-)}{Y(X_i)}$. Then
\begin{align}
\sum_{i=1}^n K(X_i,X_i)W_i^2&=\sum_{i=1}^n K(X_i,X_i)\left(\frac{\widehat S_n(X_i-)}{Y(X_i)}\right)^2\Delta_i \nonumber\\
&= \int_0^{\tau_n} K(x,x)\left(\frac{\widehat S_n(x-)}{Y(x)}\right)^2dN(x).\label{eqn:randomfle9391}
\end{align}
We will first prove that $\sqrt{n}\sum_{i=1}^n K(X_i,X_i)W_i^2 = o_p(1)$ under Condition~\ref{assu:nondegenerate}. Note that the process 
$\int_0^{t}|K(x,x)|\left(\frac{\widehat S_n(x-)}{Y(x)}\right)^2dN(x)$ is an $(\mathcal F_t)$-submartingale, with compensator given by $Z(t) = \int_0^{t}|K(x,x)|\left(\frac{\widehat S_n(x-)}{Y(x)}\right)^2\frac{Y(x)}{S(x-)}dF(x)$. By the Lenglart-Rebolledo inequality, it is enough to prove that $\sqrt{n}Z(\tau_n) = o_p(1)$.
An application of Propositions~\ref{prop:ProbBounds}.\ref{prop:ProbBound1} and~\ref{prop:ProbBounds}.\ref{prop:ProbBound2}  shows that
\begin{align*}
\sqrt{n}Z(\tau_n) &= \int_0^{\tau_n}|K(x,x)|\left(\frac{\widehat S_n(x-)}{Y(x)}\right)^2\frac{Y(x)}{S(x-)}dF(x)\nonumber\\
&=O_p(1)\int_0^{\tau}\frac{\ind_{\{x\leq \tau_n\}}}{\sqrt{Y(x)}}|K(x,x)|\sqrt{\frac{ S(x-)}{(1-G(x-)}}dF(x).\nonumber
\end{align*}
Finally, $\int_0^{\tau}\frac{\ind_{\{x\leq \tau_n\}}}{\sqrt{Y(x)}}|K(x,x)|\sqrt{\frac{ S(x-)}{1-G(x-)}}dF(x) = o_p(1)$ by the Dominated Convergence Theorem since $Y(x) \to \infty$ for each $x \in I_H$, $Y(x)\geq 1$ for $x \leq \tau_n$, and $\int_0^{\tau} |K(x,x)|\sqrt{\frac{ S(x-)}{(1-G(x-)}}dF(x)<\infty$ by hypothesis.

We now prove that $n\sum_{i=1}^n K(X_i,X_i)W_i^2 = \int_0^{\tau} \frac{K(x,x)}{1-G(x-)}dF(x)+o_p(1)$. We start by proving the intermediate step
\begin{align}
\frac{1}{n}\int_0^{\tau_n} |K(x,x)|\left|\left(\frac{\widehat S_n(x-)}{(Y(x)/n)}\right)^2-\frac{1}{(1-G(x-))^2}\right| dN(x)&=o_p(1).
\label{eqn:random192384isdvijsd}
\end{align}
Denote by $U_n(x) = \left|\left(\frac{\widehat S_n(x-)}{(Y(x)/n)}\right)^2-\frac{1}{(1-G(x-))^2}\right|$ which is predictable w.r.t. $(\mathcal F_x)_{x\geq 0}$. Then, observe that $\frac{1}{n}\int_0^{t} |K(x,x)|U_n(x) dN(x)$ is an $(\mathcal F_t)$-submartingale with compensator, evaluated at $\tau_n$, given by
\begin{align}
&\frac{1}{n}\int_0^{\tau_n} |K(x,x)| U_n(x) \frac{Y(x)}{S(x-)}dF(x)\nonumber\\
&\quad=O_p(1)\int_0^{\tau} \ind_{\{x\leq \tau_n\}} |K(x,x)|U_n(x) (1-G(x-))dF(x),\label{eqn:randombir9184fcb}
\end{align}
where the equality holds by Proposition~\ref{prop:ProbBounds}.\ref{prop:ProbBound3}.
We claim that Equation \eqref{eqn:randombir9184fcb} equals $o_p(1)$ by Lemma~\ref{lemma:integralConvergence2}, whose conditions we proceed to verify. Set $R(x) = \frac{|K(x,x)|}{1-G(x-)}$, which is integrable, and set $R_n(x)$ as the integrand in Equation \eqref{eqn:randombir9184fcb}. For the first condition of Lemma~\ref{lemma:integralConvergence2}, note that $U_n(x) \to 0$ for all $x$, due to Proposition~\ref{lemma:supConverSH}, hence $R_n(x) \to 0$. For the second condition, Propositions~\ref{prop:ProbBounds}.\ref{prop:ProbBound1} and~\ref{prop:ProbBounds}.\ref{prop:ProbBound2} yield $U_n(x) = O_p(1)(1-G(x-))^{-2}$ uniformly in $x\leq\tau_n$, and thus $R_n(x) =O_p(1) R(x)$. 

Finally, the Lenglart-Rebolledo Inequality, we get that Equation~\eqref{eqn:random192384isdvijsd} holds true. Combining \eqref{eqn:randomfle9391} and \eqref{eqn:random192384isdvijsd} yields
\begin{align*}
n \sum_{i=1}^n K(X_i,X_i)W_i^2 = \frac{1}{n}\sum_{i=1}^n \frac{K(X_i,X_i)}{(1-G(X_i-))^2}\Delta_i+o_p(1),
\end{align*}
hence, by the Law of Large numbers we obtain that $n\sum_{i=1}^n K(X_i,X_i)W_i^2 = \int_0^{\tau} \frac{K(x,x)}{1-G(x-)}dF(x) +o_p(1)$.

\end{document}